\theoremstyle{plain}
\newtheorem{theorem}{Theorem}[section]
\newtheorem{proposition}{Proposition}[section]
\newtheorem{corollary}{Corollary}[section]
\newtheorem{lemma}{Lemma}[section]
\theoremstyle{definition}
\newtheorem{definition}{Definition}[section]
\newtheorem{remark}{Remark}[section]
\newtheorem{example}{Example}[section]
\def\limfunc#1{#1}
\numberwithin{equation}{section}
\renewcommand\section{\@startsection{section}{1}{\z@}%
                                   {-5.5ex \@plus -1ex \@minus -.2ex}%
                                   {1.3ex \@plus.2ex}%
                                   {\normalfont\normalsize\bfseries}}
\renewcommand{\@seccntformat}[1]{\csname the#1\endcsname.\ }
\renewcommand\subsection{\@startsection{subsection}{2}{\z@}%
                                   {-2.5ex \@plus -1ex \@minus -.2ex}%
                                   {1.3ex \@plus.2ex}%
                                   {\normalfont\normalsize\it}}
\DeclareMathOperator{\epi}{epi}
\DeclareMathOperator{\dom}{dom}
\DeclareMathOperator{\inte}{int}
\DeclareMathOperator{\co}{co}
\DeclareMathOperator{\sco}{sco}
\DeclareMathOperator{\scl}{scl}
\DeclareMathOperator{\sclco}{sclco}
\DeclareMathOperator{\cl}{cl}
\DeclareMathOperator{\supp}{supp}
\DeclareMathOperator{\winf}{WInf}
\DeclareMathOperator{\wsup}{WSup}
\DeclareMathOperator{\wmin}{WMin}
\DeclareMathOperator{\wmax}{WMax}
\def\RR{\mathbb{R}}
\def\A{\mathcal{A}}
\def\B{\mathcal{B}}
\def\L{\mathcal{L}}
\def\U{\mathcal{U}}
\def\V{\mathcal{V}}
\def\W{\mathcal{W}}
\def\R{\mathbb{R}}
\def\la{\langle}
\def\ra{\rangle}
\begin{document}

\setlength{\abovedisplayskip}{3pt}
\setlength{\belowdisplayskip}{3pt}


\title{\large\bf  Sectional convexity of  epigraphs of conjugate mappings with applications to    robust vector  duality 
}

\author{{ \normalsize N.~Dinh}\thanks{
International University, Vietnam National University - HCMC, Linh Trung
ward, Thu Duc district, Ho Chi Minh city, Vietnam ({\it ndinh@hcmiu.edu.vn}). Parts of the work of this author is supported by the NAFOSTED, Vietnam. } 
\and {\normalsize D.H.~Long}\thanks{VNUHCM - University of Science, District 5, Ho Chi Minh city, Vietnam, and  Tien Giang University, Tien Giang town, Vietnam  ({\it danghailong@tgu.edu.vn}).}
 }

\maketitle

\centerline{\it  Dedicated to Professor Hoang Tuy's $90^{th}$ birthday} 
\begin{abstract}
This paper concerns the robust vector problems 
\begin{equation*}
\begin{array}{rrl}
\mathrm{(RVP)}\ \  & \wmin\left\{ F(x): x\in C,\; G_u(x)\in
-S,\;\forall u\in\mathcal{U}\right\} , & 
\end{array}
\label{rvop}
\end{equation*}
where    $X, Y, Z$ are locally convex Hausdorff topological vector spaces, $K$ is a  closed and convex cone in $Y$ with nonempty interior,  
and $S$ is a closed, convex cone in $Z$,
$\mathcal{U}$ is an \textit{uncertainty set}, $F\colon X\rightarrow {Y}^\bullet,$ $G_u\colon X\rightarrow Z^\bullet$ are proper mappings for all $ u \in \U$,  and  $\emptyset \ne C\subset X$.  Let $ 
A:=C\cap \left(\bigcap_{u\in\mathcal{U}}G_u^{-1}(-S)\right)$ and $I_A : X \to Y^\bullet  $ be the indicator  map defined by $I_A(x) = 0_Y $ if  $x \in A$ and $I_A(x) = + \infty_Y$ if $ x \not\in A$.

It is well-known that the epigraph of the conjugate mapping  $(F+I_A)^\ast$, in general,   is not a convex set. We show that, however, it is ``$k$-sectionally convex" in the sense  that  each section form by the intersection of $\epi (F+I_A)^\ast$  and   any translation of a ``specific $k$-direction-subspace" is a convex subset, 
for any $k$ taking from $\inte K$. 

The key results of the paper are the representations of the epigraph of the conjugate mapping  $(F+I_A)^\ast$  via the     closure of the  $k$-sectionally convex  hull of a union of epigraphs  of conjugate mappings of   mappings from a family  involving the data  of the problem  (RVP).   The  results are then given rise to stable  robust vector/convex vector  Farkas lemmas which, in turn,   are used to establish new results on   robust strong  stable duality results for (RVP). It is shown at the end of the paper that,  when specifying the result to some concrete classes of scalar robust problems (i.e.,   when $Y = \R$), our results cover and extend  several corresponding known ones  in the literature. 
 \end{abstract}

\hskip .5cm {\bf Key word:} Robust vector optimization, robust convex optimization,   robust  convex strong duality, robust stable vector Farkas lemma, sectionally  convex  sets, sectionally closed sets.

\hskip .5cm {\bf Mathematics Subject Classification:}  90C25, 49N15,   90C31

\section{Introduction}

Let $X, Y, Z$ be locally convex Hausdorff topological vector spaces (briefly, lcHtvs)  with topological dual spaces denoted  by $X^\ast, Y^\ast, Z^\ast$, respectively.  
The only topology we consider on dual spaces is the weak*-topology. For a set $U\subset X$, we denote by $\cl U$, $\inte U$, $\co U$, and $\cl \co U$ the \textit{closure}, the {\it interior},  the \textit{convex hull}, and the \textit{closed and convex hull} of $U$, respectively. Note that $\cl\co U=\cl(\co U)$.

We consider the \textit{robust vector optimization problem} of the model \cite{Robust}, \cite{DL17VJM}:   
\begin{equation}
\begin{array}{rrl}
\mathrm{(RVP)}\ \  & \wmin\left\{ F(x): x\in C,\; G_u(x)\in
-S,\;\forall u\in\mathcal{U}\right\} , & 
\end{array}
\label{rvop}
\end{equation}
where $K$ is a closed and convex cone in $Y$ with nonempty interior,  
and $S$ is a closed, convex cone in $Z$,
$\mathcal{U}$ is an \textit{uncertainty set}, $F\colon X\rightarrow {Y}^\bullet,$ $G_u\colon X\rightarrow Z^\bullet$ are proper mappings,  and  $\emptyset \ne C\subset X$.
The feasible set of (RVP) is 
\begin{equation} \label{feas-A}
A:=C\cap \left(\bigcap_{u\in\mathcal{U}}G_u^{-1}(-S)\right).\end{equation} 
We assume through out this paper that 
$A\cap\dom F\ne \emptyset$. 

Robust optimization provides a way to approach   optimization problems with  uncertain  data.     The subject  has attracted  
attention of  many mathematicians around the world  in the last decades (see \cite{Barro},   \cite{BB9},    \cite{BEN09} and the comprehensive survey papers \cite{Ber_survey} and \cite{Gabrel}). Many works  devoted to the duality results on  (both scalar and vector) robust optimization problems appeared in the literature
(see \cite{Bot10}, \cite{2}, \cite{DMVV17-Robust-SIAM}, \cite{DGLV17-Robust}, \cite{GJLL13},   \cite{JL10}, \cite{JLW13}, \cite{Tan92},     and the references therein).  In the recent years, the extension of Farkas-type results  to systems with vector-valued functions, enables the authors  to study the duality for vector optimization problems   \cite {DGLM16},\cite{epi}, and also, the duality  for robust vector optimization problems  \cite{Robust},   \cite{DL17VJM}.     

Motivated by the works \cite{Robust},  \cite{DL17VJM}, the present paper continues the study on  the duality  for robust vector problem of the models (RVP). The key results of the paper are the representations of the epigraph of the conjugate mapping  $(F+I_A)^\ast$ via    closure of the  sectionally convex  hull of a union of epigraphs  of conjugate mappings of   mappings from a family  involving the data  of the problem  (RVP). With this better understanding on the presentation of $\epi (F + I_A)^\ast$, we propose several new  concrete qualification conditions,  establish   stable  robust vector/convex vector  Farkas lemmas. These results are then used   used to establish new results on  robust strong stable  duality results for (RVP).

The paper is organized as follows: In Section 2, we  recall some notations on weak supremum and weak infimum of a set in locally topological vector spaces ordered   by  closed convex cones with non-empty interiors, and some important properties of these notions.    In Section 3  we introduce  some basic mathematical tools which will be the key tools for the our study. 
 We first introduce the notions of ``uniformly $S^+$-concave of a family mappings and ``$S^+$-uniform usc of a mapping. Notions of    $k$-sectional convexity, $k$-sectional closedness, ``$k$-sectional convex hull"  of  sets in a product  space are then introduced together with some important properties of these notions.   Section 4 is used the establishing  various representations of the epigraph of the conjugate mapping $(F +I_A)^\ast$,  $\epi (F + I_A)^\ast$ via closure of  sectional convex hull of a union of epigraphs of conjugate mappings involving in the problem (RVP).  These results  then  play  crucial roles  in proving   the main results  sections that follows.  Stable  robust vector Farkas lemma are established in   Section 5. Section 6 is left for duality results for robust vector problems (RVP). It is shown in Section 7 that   when specifying the results obtained in Section 6  to some concrete classes of scalar robust problems (i.e.,   when $Y = \R$), our results still cover and extend  several corresponding known ones  in the literature.

\section{Preliminaries and notations}

We now recall some notions and properties that will be useful in the sequent.

Let $K\subsetneqq Y $ be a   closed and convex cone in $Y$ with nonempty interior\footnote{It is also called  a proper cone.}, i.e.,  $\inte K\ne \emptyset$. It is worth observing that $K+\inte K=\inte K$, and consequently,
\begin{equation}\label{eq_1aaa}
\left.\begin{array}{c}
y\in K\\
y' \notin  -\inte K
\end{array}
\right\} \ \Longrightarrow \ y+y'\notin -\inte K.
\end{equation}
The next properties are useful in the sequel.

\begin{lemma}\label{pro_2.1aaa}
Let $y\in Y$, {$k_0\in K$}, $\lambda\in ]0,1[$ and $\alpha,\beta\in \RR$. Then
\begin{equation}\label{eq_1bis}
\left.\begin{array}{c}
y+\alpha k_0 \notin -\inte K\\
y+\beta k_0\notin -\inte K
\end{array}
\right\} \ \Longrightarrow \ y+[\lambda\alpha+(1-\lambda )\beta]k_0\notin -\inte K.
\end{equation}
\end{lemma}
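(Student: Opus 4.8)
The plan is to reduce the claim to a single elementary fact about the real parameter $t \mapsto y + t k_0$ and convexity of a certain subset of $\RR$. Concretely, I would define the set
\[
T := \{\, t \in \RR : y + t k_0 \notin -\inte K \,\}
\]
and first argue that $T$ is convex, i.e. an interval (possibly empty, a ray, or all of $\RR$). Once this is known, the hypotheses say $\alpha \in T$ and $\beta \in T$, and since $\lambda \in {]0,1[}$ the convex combination $\lambda\alpha + (1-\lambda)\beta$ again lies in $T$, which is exactly the desired conclusion. So the whole content of the lemma is the convexity of $T$.

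To prove $T$ is convex I would proceed by contraposition on the complement. Suppose $t_1, t_2 \in T$ but some intermediate point $t := \lambda t_1 + (1-\lambda) t_2$ is \emph{not} in $T$, i.e. $y + t k_0 \in -\inte K$. The idea is to exploit that $-\inte K$ is open and that $k_0 \in K$, so moving in the direction $\pm k_0$ from a point of $-\inte K$ stays inside $-\inte K$ on one side: precisely, for any $z \in -\inte K$ and any $s \ge 0$ we have $z - s k_0 \in -\inte K$, because $-\inte K - K \subseteq -\inte K$ (this is the dual form of $K + \inte K = \inte K$ recorded just before the lemma). Hence $y + t k_0 \in -\inte K$ forces $y + t' k_0 \in -\inte K$ for \emph{all} $t' \le t$. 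Applying this with $t' = \min\{t_1,t_2\} \le t$ contradicts the assumption that $\min\{t_1,t_2\}$ — which is one of $t_1,t_2$ — lies in $T$. Therefore no such intermediate point exists and $T$ is convex. Actually, to be careful about which direction of the ray is stable: from $z \in -\inte K$ we get $z - sk_0 = z + s(-k_0)$ and $-k_0 \in -K$, and since $-\inte K + (-K) \subseteq -\inte K$, indeed $z - sk_0 \in -\inte K$; so $T$ is the complement of a set closed under decreasing $t$, making the complement a ray of the form ${]-\infty, c[}$ or ${]-\infty,c]}$ or empty, and $T$ itself is therefore of the form $[c,+\infty[$, $]c,+\infty[$, $\RR$, or $\emptyset$ — in every case convex.

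Alternatively, and perhaps more transparently, I can avoid discussing $T$ and argue directly: assume for contradiction that $y + [\lambda\alpha + (1-\lambda)\beta]k_0 \in -\inte K$. Without loss of generality $\alpha \le \beta$, so $\lambda\alpha + (1-\lambda)\beta \ge \alpha$, i.e. $\lambda\alpha + (1-\lambda)\beta = \alpha + \mu k_0$-coefficient with $\mu := (1-\lambda)(\beta-\alpha) \ge 0$. Then
\[
y + \alpha k_0 = \big( y + [\lambda\alpha+(1-\lambda)\beta]k_0 \big) - \mu k_0 \in -\inte K + (-K) \subseteq -\inte K,
\]
contradicting the hypothesis $y + \alpha k_0 \notin -\inte K$. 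This is the cleanest route and I would write the proof this way, with the one-line observation $-\inte K - K = -\inte K$ (equivalently $\inte K + K = \inte K$, as noted in the text) doing all the work.

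The only genuinely delicate point is making sure the inclusion $-\inte K + (-K) \subseteq -\inte K$ is invoked correctly and with the right sign conventions; everything else is bookkeeping on real numbers. I do not anticipate any real obstacle: the lemma is a soft, one-step consequence of the stated property of proper cones, and the contraposition-plus-translation argument is essentially forced. If one prefers a non-contradiction proof, the same identity gives it directly: writing $\gamma := \lambda\alpha + (1-\lambda)\beta$, both $\gamma - \alpha$ and $\beta - \gamma$ could be negative, so I would just pick the one of $\alpha,\beta$ on the appropriate side of $\gamma$ (the smaller one) to carry out the subtraction of a nonnegative multiple of $k_0$.
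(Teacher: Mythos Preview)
Your proposal is correct and essentially identical to the paper's proof: both argue by contradiction, assume the convex combination lands in $-\inte K$, pick the smaller of $\alpha,\beta$, and use $-\inte K - K = -\inte K$ to push that endpoint into $-\inte K$ as well. The only cosmetic difference is that you invoke ``without loss of generality $\alpha\le\beta$'' while the paper writes out both cases.
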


\begin{proof}
Let us assume that  the statement in the right-hand side of \eqref{eq_1bis} is false, i.e.,
\begin{equation}\label{cts6}
y+[\lambda\alpha+(1-\lambda )\beta]k_0\in -\inte K.
\end{equation}
If  $\alpha\le \beta$ then $\alpha-[\lambda \alpha+(1-\lambda)\beta] =(1-\lambda)(\alpha-\beta)\le 0$, 
which, together with the fact that  {$k_0\in  K$},  leads  to  
\begin{equation}\label{cts7-l}
\left\{ \alpha  -[\lambda \alpha+(1-\lambda)\beta]\right\}  k_0    =   \alpha k_0 -[\lambda \alpha+(1-\lambda)\beta] k_0    \in -K. 
\end{equation}
From \eqref{cts6} and \eqref{cts7-l}, we get 
$y+\alpha k_0\in -\inte K-K= -\inte K$. 

In case $\beta \leq \alpha$, one has  $\beta -[\lambda \alpha+(1-\lambda)\beta] =\lambda    (\beta - \alpha )\le 0$. The same argument as above  leads to 
$y+ \beta  k_0\in-\inte K$.   Consequently, \eqref{eq_1bis} holds. 
\end{proof}

\begin{lemma}\label{lem_2.2zba}
Let $y\in Y$ and $k_0\in \inte K$. Then, there exist $\bar\alpha\in \mathbb{R}$ such that for $\alpha \in \R$,  it holds
\begin{equation}
\label{eq_2.5zba}
\alpha<\bar \alpha  \  \; \Longleftrightarrow\; \ \big(  y+\alpha k_0\in -\inte K \big).
\end{equation}
\end{lemma}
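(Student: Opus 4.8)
The plan is to analyze the set
$M:=\{\alpha\in\RR : y+\alpha k_0\in-\inte K\}$
and to show that it is exactly an open half-line $(-\infty,\bar\alpha)$; then $\bar\alpha:=\sup M$ will satisfy \eqref{eq_2.5zba}. Four facts about $M$ are needed: $M\neq\emptyset$, $M$ is bounded above, $M$ is \emph{downward closed} (i.e., $\alpha\in M$ and $\alpha'<\alpha$ imply $\alpha'\in M$), and $M$ is open.

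For $M\neq\emptyset$: for $\alpha<0$ one has $y+\alpha k_0=(-|\alpha|)\big(k_0-|\alpha|^{-1}y\big)$, and since $\inte K$ is an open cone with $k_0-|\alpha|^{-1}y\to k_0\in\inte K$ as $\alpha\to-\infty$, the bracketed vector lies in $\inte K$ for $|\alpha|$ large, so $y+\alpha k_0\in(-|\alpha|)\inte K=-\inte K$. For boundedness above, suppose $\alpha_n\to+\infty$ with $y+\alpha_n k_0\in-\inte K\subset -K$; dividing by $\alpha_n>0$ and using that $K$ is a closed cone yields $-k_0=\lim_n(-\alpha_n^{-1}y-k_0)\in K$, hence $0=k_0+(-k_0)\in K+\inte K=\inte K$ by the relation $K+\inte K=\inte K$ recalled at the start of this section; this forces $K=Y$, contradicting $K\subsetneqq Y$.

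Downward closedness follows by writing $y+\alpha' k_0=(y+\alpha k_0)+(\alpha'-\alpha)k_0\in-\inte K-K=-\inte K$, using $(\alpha-\alpha')k_0\in K$ and again $K+\inte K=\inte K$. Openness of $M$ is immediate from the continuity of $t\mapsto y+tk_0$ together with the openness of $-\inte K$. Now set $\bar\alpha:=\sup M$: the first two facts give $\bar\alpha\in\RR$, downward closedness gives $(-\infty,\bar\alpha)\subseteq M$, and openness gives $\bar\alpha\notin M$; since $M\subseteq(-\infty,\bar\alpha]$ by the definition of supremum, we conclude $M=(-\infty,\bar\alpha)$, which is precisely \eqref{eq_2.5zba}. (One may instead invoke Lemma~\ref{pro_2.1aaa} to see directly that $\RR\setminus M$ is an interval, which combined with downward closedness of $M$ also pins down the shape of $M$.)

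The only step that is not purely formal is boundedness above: turning ``$M$ contains arbitrarily large reals'' into the contradiction $0\in\inte K$ via a closedness-of-$K$ limiting argument. Everything else is a direct consequence of $\inte K$ being an open cone and of the identity $K+\inte K=\inte K$.
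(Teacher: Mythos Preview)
Your proof is correct and follows essentially the same strategy as the paper: define $M=\{\alpha:y+\alpha k_0\in-\inte K\}$, show it is nonempty and bounded above, set $\bar\alpha=\sup M$, and verify that $M=(-\infty,\bar\alpha)$ via openness and downward closedness. The only notable tactical difference is in the boundedness-above step: the paper exhibits an explicit upper bound using a balanced neighborhood of $k_0$ inside $\inte K$, whereas you argue by contradiction (dividing by $\alpha_n\to+\infty$ and using closedness of $K$ to force $-k_0\in K$, hence $0\in\inte K$); both are sound and the overall architecture is the same.
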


\begin{proof}
Let denote $P:=\{\alpha\in \mathbb{R}: y+\alpha k_0\in -\inte K\}$.

$\bullet$ Observe firstly that  $P\ne\emptyset$ and $P \ne \R$. 
Indeed, as $k_0\in \inte K$, there exist balanced, convex  neighborhoods $V_1$ and $V_2$ of $0_Y$ such that $k_0 + V_1 \in \inte K$ and $-k_0+V_2\subset -\inte K$, respectively.  Taking $\lambda_2>0$ with  $\lambda_2  y\in V_2$, one has   $ - k_0 + \lambda_2 y \in - \inte K$ or equivalently, $y-\frac{1}{\lambda_2}k_0\in -\inte K$, that means   $-\frac{1}{\lambda_2}\in P$, showing  that $P\ne \emptyset$. Now, taking $\lambda_1 >0$ such that $\lambda_1  y\in V_1$. We then have 
 $ y + \lambda_1 k_0 \in \inte K$ or equivalently,  $y+\frac{1}{\lambda_1}k_0\in \inte K$. 
As  $K$ is a proper cone, 
  $(-\inte K)\cap (\inte K)=\emptyset$, and hence,  $y+\frac{1}{\lambda_1}k_0\notin -\inte K$, or equivalently, $\frac{1}{\lambda_1}\notin P$ yielding $P\ne \mathbb{R}$. It is also worth mentioning that for any $ \lambda > 0$ such that  $\lambda < \lambda_1$ then  $\lambda y \in V_1$ and, by the same argument as above  we have $ \frac{1}{\lambda} \notin P$, meaning that $P$ is bounded above.

So, one has  $\bar \alpha:=\sup P\in \mathbb{R}$. We now prove that \eqref{eq_2.5zba} holds.

$\bullet$  $[\Longleftarrow]$ Assume that $y+\alpha k_0\in -\inte K$. Then, $\alpha \in P$ and hence,  $\alpha\le   \bar \alpha =  \sup P$. 
Now, if  $\alpha=\bar \alpha$ then, as $y+\alpha k_0\in -\inte K$, there is a neighborhood $V$ of $0_Y$ such that $y+\alpha k_0+V\subset -\inte K$. Take $\epsilon>0$ such that $\epsilon k_0\in V$, one has $y+\alpha k_0 +\epsilon k_0 = y+(\alpha+\epsilon)k_0\in -\inte K$ which yields $\alpha+\epsilon \in P$. We then have  $\alpha+\epsilon >\alpha=\bar\alpha=\sup P$, a contradiction. Consequently,  $\alpha<\bar \alpha$ as expected. 

$\bullet$  $[\Longrightarrow]$  Assume that $\alpha<\bar \alpha$. Then, as $\bar \alpha=\sup P$, there is $\alpha_1\in P$ such that $\alpha<\alpha_1$. As $\alpha_1\in P$, it holds $y+\alpha_1 k_0\in -\inte K$, and hence,
$$y+\alpha k_0=(y+\alpha_1 k_0) +(\alpha-\alpha_1)k_0\in -\inte K-\inte K=-\inte K.$$
The proof is complete.
\end{proof}

In this paper, we shall use the  two orderings generated by the cone $K$: the weak ordering and the usual ordering, defined respectively by, for any  $y_1, y_2 \in Y$, 
\begin{itemize}[wide, nosep]
\item {\it weak ordering:} \   $y_1<_K y_2$ if and only if  $y_1-y_2\in-\inte K $, 

\item {\it usual ordering:}\  $y_1\leqq_K y_2$ if and only if $y_1-y_2\in -K$.
\end{itemize}
We enlarge $Y$ by attaching a \textit{greatest element} $+\infty_Y$ and a \textit{smallest element} $-\infty_Y$  which do not belong to $Y$, and we denote $Y^\bullet:=Y\cup\{-\infty_Y,+\infty_Y\}$. By convention, $-\infty_Y<_K y<_K+\infty_Y$ for any $y\in Y$. We also assume by convention that
\begin{gather}
-(+\infty_Y)=-\infty_Y,\qquad\qquad -(-\infty_Y)=+\infty_Y,\notag\\
(+\infty_Y)+y=y+(+\infty_Y)=+\infty_Y,\quad \forall y\in Y\cup\{+\infty_Y\}\label{conv}\\
(-\infty_Y)+y=y+(-\infty_Y)=-\infty_Y,\quad \forall y\in Y\cup\{-\infty_Y\}\notag
\end{gather}

The sums $(-\infty_Y)+(+\infty_Y)$ and $(+\infty_Y)+(-\infty_Y)$ are not considered in this paper.

\bigskip

Given $\emptyset\ne M\subset Y^\bullet$, the following notions  quoted from \cite[Definition 7.4.1]{2}) will be used throughout this paper.
\begin{itemize}[nosep, wide]
		\item An element $\bar v\in Y^\bullet$ is said to be a \textit{weakly infimal element} of $M$ 
		if for all $v\in M$ we have $v\not<_K \bar v$ and if for any $\tilde{v}\in Y^\bullet$ such that 
		$\bar v<_K\tilde{v}$, then there exists some $v\in M$ satisfying $v<_K\tilde{v}$. The set of all weakly
		infimal elements of $M$ is denoted by $\winf M$ and is called the \textit{weak infimum} of $M$.

		\item An element $\bar v\in Y^\bullet$ is said to be a \textit{weakly supremal element} of $M$
		if for all $v\in M$ we have $\bar v\not<_K  v$ and if for any $\tilde{v}\in Y^\bullet$ such that
		$\tilde v<_K\bar v$, then there exists some $v\in M$ satisfying $\tilde v<_K v$. The set of all weakly supremal elements of $M$ is denoted by $\wsup M$ and is called the \textit{weak supremum} of $M$.

		\item The \textit{weak minimum} of $M$ is the set $\wmin M=M\cap \winf M$ and its 
		elements are the \textit{weakly minimal elements} of $M$. The \textit{weak maximum} of $M$,  
		$\wmax M$,   is defined similarly, $\wmax M :=M\cap \wsup M$.  
	\end{itemize}

Weak infimum and weak supremum of the empty set is defined by convention as $\wsup \emptyset = \{-\infty_Y\}$ and $\winf \emptyset = \{+\infty_Y\}$, 
respectively.

It follows  from the definition of $\wsup M$ that  $\wsup (M+a)=a+\wsup M$ for all $M\subset Y^\bullet$ and $a\in Y$,  and 
\begin{eqnarray*}
+\infty _{Y}\in \wsup M\; &\Longleftrightarrow &\;\wsup M=\{+\infty _{Y}\} 
\notag  \label{wsup=infty*} \\
&\Longleftrightarrow &\;\forall \tilde{v}\in Y,\;\exists v\in M:\tilde{v}%
<_{K}v. 
\end{eqnarray*}
 Next  properties of sets of weak suprema and weak minima  were quoted from \cite[Proposition 2.1]{DGLM16}, \cite[Proposition 7.4.3]{2}, and  \cite[Proposition 2.4]{Tan92}.

 \begin{lemma}\label{lemextra2}  Assume that  $\emptyset \neq M\subset Y$.  Then it holds: 
\begin{itemize}[nosep]
\item[{\rm (a)}] $ \wsup M=\cl(M-\inte K)\setminus (M-\inte K)$,

\item[{\rm (b)}]  If\  $\wsup M\neq \{+\infty
_{Y}\}$   then  $\wsup M-\inte K=M-\inte K$,
\item[{\rm (c)}] If\  $\wsup M\neq \{+\infty
_{Y}\}$   then the following  decomposition of $Y$ holds: 
\begin{equation} \label{Ydecomposition}
Y=\left( M-\inte K\right) \cup (\wsup M)\cup (\wsup M+\inte K), 
\end{equation}
\item[{\rm\ \ \ (d)} ]  If  $\emptyset \ne M,N\subset Y^\bullet$,  then 
\begin{equation} \label{eq_2aa}
 \wsup (\wsup M+\wsup N)= \wsup  (M+\wsup N)=\wsup (M+N).
\end{equation}
\end{itemize}
\end{lemma}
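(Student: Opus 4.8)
The plan is to prove Lemma \ref{lemextra2} one item at a time, exploiting the elementary fact $K + \inte K = \inte K$ (hence $\inte K + \inte K = \inte K$ and $-\inte K - \inte K = -\inte K$) together with the definition of $\wsup$. First, for part (a): I would observe that $M - \inte K$ is an open set (a union of translates of the open set $-\inte K$), and then show directly from the definition that $\bar v \in \wsup M$ if and only if $\bar v \notin M - \inte K$ but $\bar v \in \cl(M - \inte K)$. The forward implication: if $\bar v$ is weakly supremal then $\bar v \not<_K v$ for all $v \in M$, i.e. $\bar v - v \notin -\inte K$, which says exactly $\bar v \notin M - \inte K$; and for any neighborhood, picking $\tilde v$ slightly below $\bar v$ (using Lemma \ref{lem_2.2zba} to move along a direction $k_0 \in \inte K$) produces $v \in M$ with $\tilde v <_K v$, so $\bar v$ is approximated by points of $M - \inte K$, giving $\bar v \in \cl(M - \inte K)$. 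The converse runs the same equivalences backward, using openness of $M-\inte K$ to handle the approximation condition.

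For part (b), suppose $\wsup M \neq \{+\infty_Y\}$. The inclusion $M - \inte K \subset \wsup M - \inte K$ follows from the decomposition-type reasoning or directly: for $v \in M$ and $k \in \inte K$, I must find $\bar v \in \wsup M$ with $v - k - \bar v \in -\inte K$; this is where Lemma \ref{lem_2.2zba} is essential, as it lets me slide from $v - k$ along $-k_0$, $k_0 \in \inte K$, until I hit the boundary value $\bar\alpha$, and the point $v - k + \bar\alpha k_0$ lands in $\wsup M$ by part (a). The reverse inclusion $\wsup M - \inte K \subset M - \inte K$ uses part (a): $\wsup M \subset \cl(M - \inte K)$, and subtracting the open set $\inte K$ from a set contained in the closure of the open set $M - \inte K$ lands inside $M - \inte K$ itself (since $\cl(U) - \inte K = U - \inte K$ for $U = M - \inte K$ because adding a small open cone neighborhood absorbs the closure). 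Part (c) is then almost immediate: by part (a), $Y \setminus (M - \inte K)$ is partitioned by whether a point lies in $\cl(M-\inte K)$ or not; the former is $\wsup M$ by (a), and the latter I claim is $\wsup M + \inte K$ — for a point $w$ with $w \notin \cl(M - \inte K)$, Lemma \ref{lem_2.2zba} applied to slide $w$ downward produces the threshold point in $\wsup M$ and exhibits $w$ as that point plus an element of $\inte K$.

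For part (d), the strategy is the sandwich $\wsup(M + N) \subseteq \wsup(M + \wsup N) \subseteq \wsup(\wsup M + \wsup N) \subseteq \wsup(M+N)$ or a cyclic chain thereof. The workhorse is the identity $\wsup(M+N) = \wsup((M + N) - \inte K)$-type stability under adding $-\inte K$, combined with (b): since $(M - \inte K) + N = (M+N) - \inte K + N$ and $\wsup N - \inte K = N - \inte K$ when $\wsup N \neq \{+\infty_Y\}$, one gets $M + \wsup N - \inte K = M + N - \inte K$, whence $\wsup(M + \wsup N) = \wsup(M + N)$ by part (a) (both sides are determined by the same open set); the case $\wsup N = \{+\infty_Y\}$ must be checked separately using the characterization $+\infty_Y \in \wsup N \Leftrightarrow \forall \tilde v\, \exists v \in N: \tilde v <_K v$, which forces both sides to be $\{+\infty_Y\}$. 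The treatment of $M^\bullet$-valued sets and the $\pm\infty_Y$ bookkeeping via the conventions in \eqref{conv} is routine but needs care. I expect the main obstacle to be part (d) in the infinite case and, more generally, making the reductions "$\wsup$ of a set equals $\wsup$ of its $\inte K$-lower-shadow" fully rigorous — essentially everything hinges on part (a) plus the observation that $\wsup$ depends on $M$ only through the open set $M - \inte K$, so the real content is verifying $M + \wsup N - \inte K = M + N - \inte K$ and its symmetric counterpart. Once (a) and (b) are in hand, (c) and (d) should fall out quickly.
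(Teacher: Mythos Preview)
The paper does not prove Lemma \ref{lemextra2}; it simply records the statement and cites \cite[Proposition 2.1]{DGLM16}, \cite[Proposition 7.4.3]{2}, and \cite[Proposition 2.4]{Tan92} for the proofs. So there is no ``paper's own proof'' to compare against.

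Your outline is correct and is essentially the standard argument found in those references. The crucial structural observation --- that $\wsup$ depends on $M$ only through the open set $M-\inte K$, so that (a) reduces everything to the topological boundary of this set --- is exactly right, and it makes (b), (c), (d) fall out as you describe. A couple of places where you should tighten things when writing it up: for (b), the inclusion $M-\inte K\subset \wsup M-\inte K$ needs you to check that the ray $\alpha\mapsto y+\alpha k_0$ eventually leaves $M-\inte K$, and this is where the hypothesis $\wsup M\neq\{+\infty_Y\}$ enters (if the ray stayed inside for all $\alpha$, every $w\in Y$ would satisfy $w\in M-\inte K$ via $w<_K y+\alpha k_0<_K m$ for large $\alpha$, forcing $\wsup M=\{+\infty_Y\}$). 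For (d), your reduction $M+\wsup N-\inte K=M+N-\inte K$ via (b) is the right move when $M,N\subset Y$ and $\wsup N\subset Y$; the residual bookkeeping with $\pm\infty_Y$ (e.g.\ $+\infty_Y\in N$, or $\wsup N=\{+\infty_Y\}$) is indeed routine case-checking using the conventions \eqref{conv} and the characterization of $\wsup M=\{+\infty_Y\}$ recorded just before the lemma.
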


It is worth noting that $\winf M=-\wsup(-M)$ for all $M\subset Y^\bullet$. So, 
all the assertions in Lemma \ref{lemextra2} still hold when all the terms  $\wsup$, $\wmax$, $\inte K$, and $+\infty_Y$  are replaced by  $\winf$, $\wmin$, $-\inte K$, and $-\infty_Y$, respectively.

We denote by $\mathcal{L}(X,Y)$ the space of linear continuous mappings from 
$X$ to $Y$, and by $0_\L$ the zero element  of $\mathcal{L}(X,Y)$ (i.e.,  $0\mathbf{{_{\mathcal{L}}}}(x)=0_{Y}$ for all $x\in X$).
The topology   considered in $\mathcal{L}(X,Y)$ is the one defined by the point-wise convergence, i.e., $(L_\alpha)_{\alpha \in D} \subset \mathcal{L}(X,Y)$ and $L \in \mathcal{L}(X, Y)$,  $L_\alpha\to L$ means that  $L_\alpha(x)\to L(x)$ in $Y$ for all $x\in X$. 

Given a vector-valued mapping $F\colon X\to Y^\bullet$, the \textit{effective domain} of $F$ is defined by
$\dom F:=\{x\in X: F (x)\ne +\infty_Y\}$, and the \textit{$K$-epigraph} of $F$ is defined by
$\epi\nolimits_K F =\{(x,y)\in X\times Y:y\in F(x)+K\}$.  As $K$ is fixed for the   whole  paper, we will  write $\epi F$ instead of $\epi_K F$.
We say that
\begin{itemize}[wide, nosep]
\item $F$ is \textit{proper} if $\dom F\ne\emptyset$ and $-\infty_Y\notin F(X)$,
\item $F$ is {\it $K$-convex} if $\epi F$ is a convex subset of $X\times Y$,
\item $F$ is {\it $K$-epi closed} if $\epi F$ is a closed subset of $X\times Y$,
\item $F$ is {\it positively $K$-lower semi-continuous } or {\it $K$-lsc}  ({\it $K$-upper semi-continuous},  resp.)  if $y^*\circ F$ is lsc ($y^*\circ F$ is usc, resp.) for all $y^*\in K^+\setminus \{0_{Y^*}\}$.
\end{itemize}
The \textit{conjugate mapping} of $F$ is the   
 set-valued map $F^*\colon \mathcal{L}(X,Y) \rightrightarrows Y^\bullet$ defined by \cite[Definition 2.8]{DGLM16}
$$F^*(L):=\wsup\{L(x)-F(x):x\in X\}. $$
 The \textit{domain} of $F^*$ is
$\dom F^*:=\big\{L\in\mathcal{L}(X,Y):F^*(L)\ne \{+\infty_Y\}\big\}$, the \textit{$K$-epigraph}  of $F^*$ is 
\begin{align*}
\epi F^*&:=\big\{(L,y)\in\mathcal{L}(X,Y)\times Y:y\in F^*(L)+K\big\}. 
\end{align*}

It is useful to mention   that  if $F\colon X\to Y^\bullet$ is  a proper mapping then 
$\epi F^* $ is a closed subset of $\mathcal{L}  (X, Y) \times Y$  \cite[Lemma 3.5]{epi}. Moreover, it is established in \cite[Theorem 3.1]{DGLM16} that 
\begin{equation}\label{eq111}
	 (L,y)\in \epi F^* \quad\Longleftrightarrow\quad \Big(y-L(x)+F(x) \notin -\inte K,\;
		\forall x\in \dom F\Big). 
\end{equation}
The indicator mapping $I_{D}\colon X\rightarrow {Y}^{\bullet }$ of a set $D\subset X$
is defined by 
\begin{equation*}
I_{D}(x)=\left\{ 
\begin{array}{ll}
0_{Y}, & \text{if }x\in D, \\ 
+\infty _{Y}, & \text{otherwise.}%
\end{array}%
\right.
\end{equation*}%
In the case where $Y=\mathbb{R}$, $I_{D}$ collapses to  the usual indicator function $i_D$ of the set $D$ and the conjugate mapping $F^\ast$ collapses to the Fenchel conjugate function $F^\ast : X^\ast \to \R\cup\{ \pm \infty\}$ with 
$F^\ast (x^*) = \sup_{x \in X} [\la x^* , x \ra - F(x)] $ for all $ x^* \in X^\ast$. 
As usual, by $\Gamma (X)$ we denote the set of all proper, convex and lsc functions on $X$.  

 Let $S\not=\emptyset $ be a  convex cone in $Z$ and $\leqq _{S}$ be the usual ordering on 
$Z$\ induced by the cone $S$, i.e., 
$
z_{1}\leqq _{S}z_{2}\ \text{ if and only if } \ z_{2}-z_{1}\in S.$
We also enlarge $Z$ by attaching a greatest element $+\infty _{Z}$ and a
smallest element $-\infty _{Z}$  which do not
belong to $Z$, and define $Z^{\bullet }:=Z\cup \{-\infty _{Z},\ +\infty
_{Z}\}$. In $Z^{\bullet }$ we adopt the same  conventions as in \eqref{conv}.
Moreover, we recall the {\it cone of positive operators}  (see \cite{AliBurk}, \cite{DGLM16})  and the {\it cone of weak positive operators} \cite{epi} respectively,  as follows:
\begin{eqnarray*}
\mathcal{L}_{+}(S,K)&:=&\{T\in \mathcal{L}(Z,Y):\ T(S)\subset K\} \ \text{ and } \\
 \mathcal{L}^w_{+}(S,K) &:=& \{T\in\mathcal{L}(Z,Y):T(S)\cap (-\inte K)=\emptyset\}.
\end{eqnarray*}%
Lastly, for $T\in \mathcal{L}(Z,Y)$ and $G\colon X\to Z^\bullet$, the composite function $T\circ
G \colon X\rightarrow {Y}^{\bullet }$  is defined as follows: 
\begin{equation*}
(T\circ G)(x)=\left\{ 
\begin{array}{ll}
T(G(x)), & \text{if }G(x)\in Z, \\ 
+\infty _{Y}, & \text{if $G(x)=+\infty _{Z}.$}%
\end{array}%
\right.
\end{equation*}

\section{Basic tools: Sectional convexity and sectional closedness }
In this section, we will introduce {notions} of generalized  convexity {and closedness}, namely, the so-called ``sectional convexity" {and ``sectional closedness'', respectively},  and establish some basic properties of {these notions}.  
We introduce  the notions of $S^+$-uniformly usc of a mapping and uniformly $S^+$-concave of a family of mappings with their basic properties.  

\subsection{Uniform $S^+$-concavity  and $S^+$-uniformly  upper semi-continuity}

Let $Z$ be a lcHtvs with a pre-order defined by a non-empty, closed  and convex cone $S \subset Z$, and    $\U$ be a topological space.

\begin{definition}\label{def5.1}
Let $\mathcal{G}, \mathcal{G}_\nu\colon  \U\to Z\cup \{+\infty_Z\}$ for all $\nu\in I$. 
\begin{itemize}[wide, nosep]
\item[$\bullet$] We say that the collection $(\mathcal{G}_\nu)_{\nu\in I}$ is {\it uniformly $S^+$-concave} if
\begin{gather}
\forall z_1^*,z_2^*\in S^+,\; \forall u_1,u_2\in \U, \; \exists z^*\in S^+,\; \exists u\in \U \textrm{ such that }\notag\\
(z^*_1\circ \mathcal{G}_\nu)(u_1)+(z_2^*\circ \mathcal{G}_\nu)(u_2)\le (z^*\circ \mathcal{G}_\nu)(u),\;  {\forall \nu\in I}.  \label{eq_32abcde}
\end{gather}

\item[$\bullet$] We say that $\mathcal{G}$ is {\it $S^+$-uniformly usc} if,  for any net $(z^*_\alpha, u_\alpha,r_\alpha)_{\alpha\in D}\subset S^+\times \U\times \RR$ and $(z^*,u,r)\in S^+\times \U\times \RR$,
$$ \begin{cases}
(z^*_\alpha\circ \mathcal{G})( {u_\alpha}) \ge r_\alpha,\; \forall \alpha \in D\\
z^*_\alpha \overset{*}{\rightharpoonup} z^*,\; u_\alpha\to u,\; r_\alpha\to r
\end{cases}
\Longrightarrow\; (z^*\circ \mathcal{G})(u)\ge r.
$$
\end{itemize}
\end{definition}

The next example illustrates the meaning of the concept  of  ``{uniformly  $S^+$-concave}''  and it is used in the proof of Corollary  \ref{cor_7.4zz}  in Section 7.

\begin{example}\label{ex5.1}
Consider the case when $Z=\mathbb{R}$ and $S=\mathbb{R}_+$, and then, $Z^*=\mathbb{R}$ and $S^+=\mathbb{R}_+$. Assume that $\U$ is a convex subset of some topological  vector space and let $g_\nu\colon \U\to \mathbb{R}\cup\{+\infty\}$ be a concave function for each $\nu\in I$. Then  $(g_\nu)_{\nu\in I}$ is {uniformly $\mathbb{R}_+$-concave}. In deed, take $\lambda_1,\lambda_2\ge 0$ and $u_1,u_2\in \U$, we find $\lambda\ge 0$ and $u\in \U$ such that 
\begin{equation}\label{eq_33m}
\lambda_1g_\nu (u_1)+\lambda_2g_\nu(u_2)\le \lambda g_\nu(u), \quad {\nu\in I}.
\end{equation}
If $\lambda_1=\lambda_2=0$, we just take $\lambda=0$ and $u=u_1$. Assume that $\lambda_1>0$ or $\lambda_2>0$, or equivalently, $\lambda_0:=\lambda_1+\lambda_2>0$. {For all  $\nu\in I$}, as $g_\nu$ is concave, one has
\begin{align*}
\lambda_1g_\nu(u_1)+\lambda_2 g_\nu (u_2)=\lambda_0\left(\frac{\lambda_1}{\lambda_0}g_\nu(u_1)+\frac{\lambda_2}{\lambda_0}g_\nu (u_2)\right)\le \lambda_0g_\nu\left(\frac{\lambda_1}{\lambda_0}u_1+\frac{\lambda_2}{\lambda_0}u_2\right)
\end{align*} 
(note that $\frac{\lambda_1}{\lambda_0}+\frac{\lambda_2}{\lambda_0}=1$). So,  \eqref{eq_33m} follows by taking  $\lambda=\lambda_0$ and $u=\frac{\lambda_1}{\lambda_0}u_1+\frac{\lambda_2}{\lambda_0}u_2$.

\end{example}

\begin{lemma}\label{rem_5.1zzz}
\begin{itemize}[wide, nosep]
\item[$(i)$] If $\mathcal{G}$ is   {$S^+$-uniformly usc} then $\mathcal{G}$ is  positively $S$-usc.
\item[$(ii)$] For $Z=\RR$ and $S=\RR_+$, $\mathcal{G}$ is  {$\RR_+$-uniformly usc} if and only if it is usc.

\item[$(iii)$]  {Assume that $\U_i$ is a topological space and that $\mathcal{G}_i\colon \U_i\to \mathbb{R}\cup\{+\infty\}$ is a usc function, for all $i=1,\ldots m$. Then, the mapping $\mathcal{G}\colon \U:= \prod_{i=1}^m \U_i\to \mathbb{R}^m\cup\{+\infty_{\mathbb{R}^m}\}$ defined by $\mathcal{G}((u_i)_{i=1}^m)=(\mathcal{G}_i(u_i))_{i=1}^m$ is  $\mathbb{R}^m_+$-uniformly usc.}
 \end{itemize}
\end{lemma}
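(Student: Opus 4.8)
The plan is to prove the three items of Lemma~\ref{rem_5.1zzz} in turn, with item $(iii)$ being the substantive one and the first two being essentially unwinding of definitions.

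\textbf{Proof of $(i)$.} The goal is to show that if $\mathcal{G}$ is $S^+$-uniformly usc, then $y^\ast \circ \mathcal{G}$ is usc for each fixed $z^\ast \in S^+ \setminus \{0_{Z^\ast}\}$ (and trivially for $z^\ast = 0_{Z^\ast}$). First I would take a net $(u_\alpha)_{\alpha \in D}$ converging to $u$ in $\U$ and specialize the definition of $S^+$-uniform usc to the \emph{constant} net $z^\ast_\alpha \equiv z^\ast$. To check upper semicontinuity of $z^\ast \circ \mathcal{G}$ at $u$, I would let $r := \limsup_\alpha (z^\ast \circ \mathcal{G})(u_\alpha)$ (if this is $-\infty$ there is nothing to prove; if $+\infty$ one must be slightly careful, but since $\mathcal{G}$ takes values in $Z \cup \{+\infty_Z\}$ and $z^\ast \in S^+$, $z^\ast \circ \mathcal{G}$ takes values in $\mathbb{R} \cup \{+\infty\}$, and passing to a subnet along which the $\limsup$ is attained, with $r_\alpha := (z^\ast \circ \mathcal{G})(u_\alpha) \to r$), apply the implication in Definition~\ref{def5.1} to conclude $(z^\ast \circ \mathcal{G})(u) \ge r = \limsup_\alpha (z^\ast \circ \mathcal{G})(u_\alpha)$, which is exactly upper semicontinuity. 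The only mild subtlety is the bookkeeping with extended-real values; passing to subnets handles it.

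\textbf{Proof of $(ii)$.} For $Z = \mathbb{R}$, $S = \mathbb{R}_+$, we have $S^+ = \mathbb{R}_+$, and a net $(z^\ast_\alpha) \subset \mathbb{R}_+$ with $z^\ast_\alpha \to z^\ast$. The forward direction is $(i)$ (usc of $z^\ast \circ \mathcal{G} = z^\ast \mathcal{G}$ for all $z^\ast \ge 0$ forces usc of $\mathcal{G}$, e.g. taking $z^\ast = 1$). For the converse, suppose $\mathcal{G}$ is usc; given a net with $z^\ast_\alpha \mathcal{G}(u_\alpha) \ge r_\alpha$, $z^\ast_\alpha \to z^\ast \ge 0$, $u_\alpha \to u$, $r_\alpha \to r$, I would split into the case $z^\ast > 0$ (divide through: $\mathcal{G}(u_\alpha) \ge r_\alpha / z^\ast_\alpha$ eventually, pass to the limit using usc of $\mathcal{G}$ and continuity of division away from $0$, getting $z^\ast \mathcal{G}(u) \ge r$) and the case $z^\ast = 0$ (then $r = \lim r_\alpha \le \limsup z^\ast_\alpha \mathcal{G}(u_\alpha)$; here one uses that $\mathcal{G}$, being usc on a topological space, is locally bounded above near $u$, so $z^\ast_\alpha \mathcal{G}(u_\alpha) \to 0$ when $\mathcal{G}(u_\alpha)$ stays bounded, forcing $r \le 0 = z^\ast \mathcal{G}(u)$; if $\mathcal{G}(u_\alpha) \to +\infty$ along a subnet the inequality $z^\ast \mathcal{G}(u) = +\infty \ge r$ — wait, $z^\ast = 0$, so this needs the local-boundedness argument). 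The case $z^\ast = 0$ is where a small amount of care is needed, and the cleanest route is: usc of $\mathcal{G}$ at $u$ gives a neighborhood $W$ of $u$ and $M \in \mathbb{R}$ with $\mathcal{G} \le M$ on $W$ — hmm, this is only true if $\mathcal{G}(u) < +\infty$; if $\mathcal{G}(u) = +\infty$ then $z^\ast \mathcal{G}(u) = 0 \cdot (+\infty)$, which under the paper's conventions is delicate, but actually $\mathcal{G}$ maps into $\mathbb{R} \cup \{+\infty\}$ and $0 \cdot (+\infty)$ is not among the admitted operations — so one restricts attention to the genuinely occurring case or invokes the convention $0 \cdot (+\infty_Z) $ implicitly as giving $0_Y$. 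I would just note $(z^\ast\circ\mathcal G)(u)=(0\circ\mathcal G)(u)$ and handle it by the boundedness argument, flagging that the $\mathcal{G}(u) = +\infty$ sub-case is vacuous or trivial under the standing conventions.

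\textbf{Proof of $(iii)$.} This is the main claim. Let $\U = \prod_{i=1}^m \U_i$, $\mathcal{G}((u_i)_i) = (\mathcal{G}_i(u_i))_i$, $S = \mathbb{R}^m_+$, so $S^+ = \mathbb{R}^m_+$ and for $z^\ast = (z^\ast_1,\dots,z^\ast_m) \in \mathbb{R}^m_+$ we have $(z^\ast \circ \mathcal{G})((u_i)_i) = \sum_{i=1}^m z^\ast_i \mathcal{G}_i(u_i)$. The plan: take a net $(z^\ast_{(\alpha)}, u_{(\alpha)}, r_\alpha)$ with $z^\ast_{(\alpha)} = (z^\ast_{i,\alpha})_i \to z^\ast = (z^\ast_i)_i$ coordinatewise, $u_{(\alpha)} = (u_{i,\alpha})_i \to u = (u_i)_i$ (so $u_{i,\alpha} \to u_i$ in $\U_i$ for each $i$), $r_\alpha \to r$, and $\sum_i z^\ast_{i,\alpha} \mathcal{G}_i(u_{i,\alpha}) \ge r_\alpha$ for all $\alpha$; want $\sum_i z^\ast_i \mathcal{G}_i(u_i) \ge r$. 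The key step is: for each fixed $i$, the function $\varphi_i : \mathbb{R}_+ \times \U_i \to \mathbb{R} \cup \{+\infty\}$, $\varphi_i(t, v) = t\,\mathcal{G}_i(v)$, is usc at every point $(z^\ast_i, u_i)$ — this is exactly item $(ii)$ applied to $\mathcal{G}_i$, reformulated: $(ii)$ says precisely that for a convergent net $(t_\alpha, v_\alpha) \to (t, v)$ with $t_\alpha \mathcal{G}_i(v_\alpha) \ge s_\alpha$ and $s_\alpha \to s$ one gets $t\,\mathcal{G}_i(v) \ge s$, i.e. $\limsup_\alpha t_\alpha \mathcal{G}_i(v_\alpha) \le t\,\mathcal{G}_i(v)$. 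Hence $\limsup_\alpha \big(\sum_i z^\ast_{i,\alpha}\mathcal{G}_i(u_{i,\alpha})\big) \le \sum_i \limsup_\alpha \big(z^\ast_{i,\alpha}\mathcal{G}_i(u_{i,\alpha})\big) \le \sum_i z^\ast_i \mathcal{G}_i(u_i) = (z^\ast \circ \mathcal{G})(u)$, using subadditivity of $\limsup$ for finitely many nets (valid as long as we do not create an $\infty - \infty$; since all $\mathcal{G}_i$ map into $\mathbb{R} \cup \{+\infty\}$ and all $z^\ast_i \ge 0$, each $\limsup$ lies in $\mathbb{R} \cup \{+\infty\}$, so the sum is well-defined in $\mathbb{R} \cup \{+\infty\}$). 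Combining with $r = \lim_\alpha r_\alpha \le \limsup_\alpha \sum_i z^\ast_{i,\alpha}\mathcal{G}_i(u_{i,\alpha})$ gives $r \le (z^\ast \circ \mathcal{G})(u)$, as required.

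\textbf{Main obstacle.} The only real friction is the extended-real arithmetic: making sure that subadditivity of $\limsup$ and the division arguments in $(ii)$ never run into $(+\infty) + (-\infty)$ or $0 \cdot (+\infty)$. This is controlled throughout by the sign constraints ($z^\ast, z^\ast_\alpha \in S^+$, and $\mathcal{G}, \mathcal{G}_i$ never take the value $-\infty_Z$), so every quantity in sight lives in $\mathbb{R} \cup \{+\infty\}$; the argument is therefore routine once one commits to passing to subnets wherever a $\limsup$ must be realized as a genuine limit. I expect the write-up to be short, with $(iii)$ reduced cleanly to $(ii)$ via the ``$t\mathcal{G}_i(v)$ is jointly usc'' reformulation.
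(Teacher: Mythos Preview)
Your proofs of $(i)$ and $(ii)$ are essentially identical to the paper's (the paper phrases $(i)$ via closedness of the hypograph rather than $\limsup$, and handles the $\lambda=0$ case in $(ii)$ by the same local-boundedness trick you describe).

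For $(iii)$, your route is correct but genuinely different from the paper's. The paper argues more manually: it first bounds each term $\lambda_i^\alpha \mathcal{G}_i(u_i^\alpha)$ from above and below (the upper bound from usc, the lower bound from the constraint $\sum_i \lambda_i^\alpha\mathcal{G}_i(u_i^\alpha)\ge r_\alpha$ together with the upper bounds on the other terms), then passes to subnets along which the first $m-1$ terms converge, $\gamma_i^\alpha\to\gamma_i$, applies $(ii)$ coordinatewise to get $\lambda_i\mathcal{G}_i(u_i)\ge\gamma_i$, and treats the $m$-th term separately via the residual inequality. Your approach---recognize that $(ii)$ says exactly that $(t,v)\mapsto t\,\mathcal{G}_i(v)$ is usc on $\RR_+\times\U_i$, then use subadditivity of $\limsup$ over a finite sum---is cleaner and avoids the subnet bookkeeping. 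What the paper's approach buys is that it sidesteps any worry about $\infty-\infty$ in the $\limsup$ inequality, since the two-sided bounds force the $\gamma_i$ to be real.

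One small correction: your stated reason for avoiding $\infty-\infty$ (``each $\limsup$ lies in $\RR\cup\{+\infty\}$'') is not quite right---a net of reals can have $\limsup=-\infty$. The clean fix is to first dispose of the trivial case $\sum_i z^*_i\mathcal{G}_i(u_i)=+\infty$; in the remaining case each individual $\limsup_\alpha z^*_{i,\alpha}\mathcal{G}_i(u_{i,\alpha})\le z^*_i\mathcal{G}_i(u_i)<+\infty$ by $(ii)$, so no summand on the right is $+\infty$ and subadditivity applies without ambiguity.
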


\begin{proof}
\begin{itemize}[wide]
\item[$(i)$] Assume that $\mathcal{G}$ is  {$S^+$-uniformly usc}.
Take $z^*\in S^+\setminus\{0_{Z^*}\}$. As  $\mathcal{G}$ is  {$S^+$-uniformly usc} then, it holds, for all net $( u_\alpha,r_\alpha)_{\alpha\in D}\subset  \U\times \RR$ and $(u,r)\in \U\times \RR$,
 $$ \begin{cases}
(z^*\circ \mathcal{G})( {u_\alpha}) \ge r_\alpha,\; \forall \alpha \in D\\
 u_\alpha\to u,\; r_\alpha\to r
\end{cases}
\Longrightarrow\; (z^*\circ \mathcal{G})(u)\ge r
$$
(let $z^*_\alpha=z^*$ for all $\alpha\in D$). This yields that the set $\{(u,r)\in \U\times \RR: (z^*\circ \mathcal{G})(u)\ge r\}$ is closed which also means that $z^*\circ \mathcal{G}$ is usc. {So, $\mathcal{G}$ is positively $S$-usc}.

\item[$(ii)$] 
If $\mathcal{G}$ is  {$\RR_+$-uniformly usc} then, according to $(i)$, $\mathcal{G}$ is positively $\RR_+$-usc yielding that $\mathcal{G}$ is usc.
Conversely, assume  that $\mathcal{G}$ is usc,  we will prove that  $\mathcal{G}$ is  {$\RR_+$-uniformly usc}. 
For this, take  $(\lambda_\alpha, u_\alpha,r_\alpha)_{\alpha\in D}\subset \RR_+\times \U\times \RR$ and $(\lambda,u,r)\in \RR_+\times \U\times \RR$ satisfying
\begin{gather}
\lambda_\alpha \mathcal{G}( {u_\alpha}) \ge r_\alpha,\; \forall \alpha \in D\label{eq_27zzzz}\\
{\textrm{and }} \lambda_\alpha\to \lambda,\;  u_\alpha\to u,\; r_\alpha\to r,
\end{gather}
we need to show that  $\lambda \mathcal{G}(u)\ge r$.

If $\lambda>0$  then, for all $\alpha$ large enough, $\lambda_\alpha>0$  and hence  \eqref{eq_27zzzz} yields $\mathcal{G}(u_\alpha)\ge \frac{r_\alpha}{\lambda_\alpha}$. Passing to the limit one gets $\mathcal{G}(u)\ge\frac{r}{\lambda}$ (as $\mathcal{G}$ is usc), or equivalently, $\lambda \mathcal{G}(u)\ge r$.

If  $\lambda=0$ then  as $\mathcal{G}$ is usc and $u_\alpha\to u$, one has $\mathcal{G}(u_\alpha) < \mathcal{G}(u)+1$  for $\alpha$ large enough. So, from  \eqref{eq_27zzzz}, one has,   for all $\alpha$ large enough, 
$\lambda_\alpha(\mathcal{G}(u)+1)\ge r_\alpha.  $
This leads to $r\le 0$ (as $\lambda_\alpha\to 0$ and $r_\alpha \to r$), and hence, one has  $\lambda \mathcal{G}(u) = 0 \ge r$.

\item[$(iii)$]  {Take the net $(z^*_\alpha,u_\alpha, r_\alpha)_{\alpha\in D}\subset \mathbb{R}_+^{m}\times \U\times \mathbb{R}$ and $(z^*,u, r)\in  \mathbb{R}_+^{m}\times \U\times \mathbb{R}$ with $z^*_\alpha=(\lambda_i^\alpha)_{i=1}^m$, $u_\alpha=(u^\alpha_i)_{i=1}^m$, $z^*=(\lambda_i)_{i=1}^m$ and $u=(u_i)_{i=1}^m$, and assume that
\begin{gather}
\sum_{i=1}^m \lambda_i^\alpha \mathcal{G}_i(u_i^\alpha)\ge r_\alpha, \;\forall \alpha\in D\label{eq_3.5zba}\\
\lambda^\alpha_i\to\lambda_i,\; u_i^\alpha\to u_i, \; \forall i=1,\ldots , m;\quad r_\alpha\to r.
\end{gather}
We will prove that $\sum_{i=1}^m \lambda_i  \mathcal{G}_i(u_i)\ge r.$}

As $r_\alpha\to r$, there is $M>0$ such that $\lvert r_\alpha\rvert \le M$ for all $\alpha\in D$. For all $i=1,\ldots, m$, as $\lambda_i^\alpha\to\lambda_i$, $u_i^\alpha\to u_i$, $\mathcal{G}_i$ is usc, there exists $M_i$ such that $\lambda_i^\alpha \mathcal{G}_i(u_i^\alpha)\le M_i$ for all $\alpha\in D$. 

Now, take $i\in\{1,\ldots, m-1\}$. It follows from \eqref{eq_3.5zba} that
$$ M_i \ge \lambda_i^\alpha \mathcal{G}_i(u_i^\alpha)\ge r_\alpha -  \sum_{j\in\{1,\ldots m\}\setminus\{i\}} \lambda_j^\alpha \mathcal{G}_j(u_j^\alpha) \ge -M-  \sum_{j\in\{1,\ldots m\}\setminus\{i\}} M_j,\quad \forall \alpha\in D$$
So, we can suppose that $\gamma_i^\alpha:=\lambda_i^\alpha \mathcal{G}_i(u_i^\alpha)\to \gamma_i$. One then has
\begin{gather*}
 \lambda_i^\alpha \mathcal{G}_i(u_i^\alpha)\ge \gamma_i^\alpha, \;\forall \alpha\in D  \ {\rm and} \ \ 
\lambda^\alpha_i\to\lambda_i,\; u_i^\alpha\to u_i, \; \gamma_i^\alpha\to \gamma_i.
\end{gather*}
which, together with the fact that $\mathcal{G}_i$ is usc, 
 yields $\lambda_i\mathcal{G}_i(u_i)\ge \gamma_i$ (see the proof of $(ii)$).

{
As $r_\alpha\to r$ and $\gamma_i^\alpha\to \gamma_i$ for all $i=1,\ldots m-1$, we have $r_\alpha-\sum_{i=1}^{m-1}\gamma_i^\alpha\to r - \sum_{i=1}^{m-1}\gamma_i$. Moreover, according to \eqref{eq_3.5zba}, 
$$\lambda_{m}^\alpha\mathcal{G}_m(u_{m}^\alpha)\ge r_\alpha - \sum_{i=1}^{m-1}\gamma_i^\alpha,\; \forall \alpha\in D.$$
Consequently, as  $\lambda_ {m}^\alpha\to \lambda_{m}$, $u^\alpha_{m}\to u_{m}$, and as  $\mathcal{G}_{m }$ is usc, it holds  $\lambda_{m}\mathcal{G}_{m}(u_{m})\ge r-\sum_{i=1}^{m-1}\gamma_i$ (see again the proof of $(ii)$). So,  $\sum_{i=1}^{m} \lambda_i  \mathcal{G}_i(u_i)\ge \sum_{i=1}^{m-1}\gamma_i+r-\sum_{i=1}^{m-1}\gamma_i=r$ and we are done. }
\qedhere
 \end{itemize}
\end{proof}

\subsection{Sectional convexity and sectional closedness in topological vector spaces } 

Let  $E$  be  a {topological} vector space with  $E_0$ being  its {closed} subspace.  

\begin{definition}[Sectional convexity] 
We say that the subset $N\subset E$ is  \it{$E_0$-sectionally convex}   if $N\cap (E_0+v)$ is convex  for all $v\in E$.
\end{definition}

It is worth noting that if $N$ is a convex set then $N\cap (E_0+v)$ is convex for all $v\in E$, and hence, $N$ is also a $E_0$-sectionally convex set.  The converse, however, in general is not true,  for instance,  if  $E=\RR^2$ and $E_0=\{0\}\times \mathbb{R}$, then the set $N=\{(\alpha,\beta)\in\RR^2: \alpha^2\le \beta \le \alpha^2+1\}$ is  $E_0$-sectionally convex but it is obviously not convex.

It is easy to see that  the intersection of all  $E_0$-sectionally  convex subsets of $E$ containing $\emptyset \ne N \subset E$ is an $E_0$-sectionally convex subset of $E$ that contains $N$, which  is called   the {\it $E_0$-sectionally convex  hull} of $N$, and denoted by $\sco_{E_0} N$. Clearly, $\sco_{E_0} N$ is the smallest $E_0$-sectionally convex subset of  $E$ that contains $N$.  Moreover,  it is easy to see that  

$\bullet$ $\sco_{E_0} N\subset \co N$,  

$\bullet$ $N$ is $E_0$-sectionally  convex if and only if  $\sco_{E_0}N=N$, 

$\bullet$ If $N$ is $E_0$-sectionally convex then $\cl N$ is $E_0$-sectionally convex.  

 Moreover, when  $E_0=E$, the concepts  of ``$E$-sectionally convex'' and ``$E$-sectionally convex hull'' go back  to the usual  ones  ``convex'' and ``convex hull'' in convex analysis, respectively.

The next proposition gives  a presentation of $E_0$-sectionally convex hull of a set via the convex hull.

\begin{proposition}\label{pro_form_sco}
Let  $\emptyset \ne N\subset E$. Then  
\begin{equation}\label{eqn_pfs1}
\sco_{E_0} N = \bigcup_{v\in E}\co(N\cap (E_0+v)).
\end{equation}
\end{proposition}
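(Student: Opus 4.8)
The plan is to prove the set identity \eqref{eqn_pfs1} by establishing two inclusions, and along the way to verify that the right-hand side is indeed an $E_0$-sectionally convex set containing $N$, so that the characterization of $\sco_{E_0} N$ as the smallest such set applies. Write $P := \bigcup_{v\in E}\co\big(N\cap(E_0+v)\big)$.

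First I would show $P \subset \sco_{E_0} N$. Since $\sco_{E_0} N \supset N$ and $\sco_{E_0} N$ is $E_0$-sectionally convex, for each $v\in E$ the slice $\sco_{E_0} N \cap (E_0+v)$ is convex and contains $N\cap(E_0+v)$; hence it contains $\co\big(N\cap(E_0+v)\big)$. Taking the union over $v$ gives $P\subset\sco_{E_0} N$.

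For the reverse inclusion, it suffices to check that $P$ is an $E_0$-sectionally convex set containing $N$, since then minimality of $\sco_{E_0} N$ forces $\sco_{E_0} N\subset P$. That $N\subset P$ is clear: any $x\in N$ lies in $N\cap(E_0+x)\subset\co(N\cap(E_0+x))\subset P$. The substantive point — and the step I expect to be the main obstacle — is that $P$ is $E_0$-sectionally convex, i.e.\ $P\cap(E_0+w)$ is convex for every $w\in E$. The key observation here is that the affine subspaces $\{E_0+v : v\in E\}$ partition $E$ (two cosets of the subspace $E_0$ are either equal or disjoint), so that $\co(N\cap(E_0+v))\subset E_0+v$, and consequently
\[
P\cap(E_0+w) \;=\; \co\big(N\cap(E_0+w)\big):
\]
every piece $\co(N\cap(E_0+v))$ with $E_0+v\neq E_0+w$ is disjoint from $E_0+w$, while the piece with $E_0+v=E_0+w$ is exactly $\co(N\cap(E_0+w))$ and already lies inside $E_0+w$. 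Thus the slice of $P$ by $E_0+w$ is a single convex hull, hence convex. (One should note the degenerate case $N\cap(E_0+w)=\emptyset$, where the slice is empty and convexity is vacuous.) With $P$ shown to be $E_0$-sectionally convex and to contain $N$, minimality yields $\sco_{E_0} N\subset P$, and combining with the first inclusion gives \eqref{eqn_pfs1}.

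The only slightly delicate ingredient is the disjointness of distinct cosets of $E_0$, which uses that $E_0$ is a subspace; everything else is bookkeeping with unions and convex hulls. No topology is actually needed for this identity — closedness of $E_0$ plays no role here — so I would not invoke it in the argument.
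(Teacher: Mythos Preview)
Your proof is correct and follows essentially the same route as the paper: both arguments show that the right-hand side contains $N$, is $E_0$-sectionally convex (via the coset-partition observation that distinct cosets $E_0+v$ are disjoint, so $P\cap(E_0+w)=\co(N\cap(E_0+w))$), and then invoke minimality of $\sco_{E_0}N$. Your remark that closedness of $E_0$ plays no role here is also accurate.
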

\begin{proof} Denote the set in right-hand side of \eqref{eqn_pfs1} by $M$. To prove \eqref{eqn_pfs1}, it is sufficient to check that (i) $N\subset M$, (ii)  $M$ is a $E_0$-sectionally convex set, and (iii)  $M\subset M'$ for all $E_0$-sectionally convex subset $M' $ of $E$ that contains $N$.

\indent ${\rm (i)}$ \ As $\bigcup_{v\in E}(E_0+v)=E$, one has 
\begin{align*}
N&=N\cap \left(\bigcup_{v\in E}(E_0+v)\right)
=\bigcup_{v\in E}[N\cap (E_0+v)]
 \subset \bigcup_{v\in E}\co [N\cap (E_0+v)]=M.
\end{align*}

\indent ${\rm (ii)}$ \ 
We now prove that $M$ is $E_0$-sectionally convex.  For this, take any $\bar v\in E$, we will show that $M\cap (E_0+\bar v)$ is a convex set.
 Let us represent $M=M_1\cup M_2$ with
$$M_1 := \bigcup_{v\in E_0+\bar v}\co(N \cap (E_0+v)),\quad M_2:=\bigcup_{v \in E\setminus (E_0+\bar v) }\co (N\cap (E_0+v)).$$
As $E_0$ is a subspace of $E$, it is easy to check that $E_0+v=E_0+\bar v$ whenever $v\in E_0+\bar v$ and $(E_0+v)\cap (E_0+\bar v)=\emptyset$ whenever $v\notin E_0+\bar v$. This entails $M_1 = \co(N\cap (E_0+\bar v))$ and
\begin{align*}
M_2\cap (E_0+\bar v) &=\left(\bigcup_{v \in E\setminus (E_0+\bar v) }  \co(N\cap(E_0+v))\right)\cap (E_0+\bar v)\\
&\subset  \left(\bigcup_{v \in E\setminus {(E_0+\bar v)} }  (E_0+v)\right)\cap (E_0+\bar v)=\emptyset . 
\end{align*}
Note that the last inclusion follows from the fact that
\begin{equation}\label{eq_3.2az}
\co(N\cap (E_0+v))\subset E_0+v,\quad \forall v\in E
\end{equation}
as  $E_0+v$ is a convex subset containing $N\cap (E_0+v)$.
Consequently,
$$M\cap (E_0+\bar v)=\Big(\co (N\cap (E_0+\bar v)) \cap (E_0+\bar v)\Big)\cup \emptyset=\co (N\cap (E_0+\bar v)) $$
(see \eqref{eq_3.2az}).
So, $M\cap (E_0+\bar v)$ is a convex set.

\indent ${\rm (iii)}$ \  Now, assume that $M'$ is {an} $E_0$-sectionally convex subset containing $N$, we will show that $M\subset M'$. Take an arbitrary $w\in M$. Then, {by the definition of $M$}, there exists $v\in E$ such that $w\in\co(N\cap (E_0+v)).$ On the other hand, as $M'$ is {an} $E_0$-sectionally convex set containing $N$, the set $M'\cap (E_0+v)$ is convex and contains  $N\cap (E_0+v)$. So, $M'\cap (E_0+v)\supset \co(N\cap (E_0+v))$ which yields $w\in M'$. The proof is complete.
\end{proof}

\begin{definition}[Sectional closedness] 
We say that the subset $N\subset E$ is  \it{$E_0$-sectionally closed}   if for all $v\in E$ the set   $N\cap (E_0+v)$ is closed in $E$.
\end{definition}

It is also easy to see that a closed subset of $E$ is always  $E_0$-sectionally closed subset of  $E$  for any closed subspace $E_0$ of $E$.  In general, however, the converse is not true. For instance,  consider $E=\mathbb{R}^2$ and $E_0=\{0\}\times \mathbb{R}$, then the set $(0,1)\times [0,1]$ is $E_0$-sectionally closed but it is not a closed subset of $\R^2$.

For $N\subset E$,  the intersection of all $E_0$-sectionally closed (resp., $E_0$-sectionally closed and convex) subset of $E$ containing $N$ is called  the {\it $E_0$-sectional closure} (resp., {\it $E_0$-sectionally closed and convex hull of $N$}) of $N$, and is denoted by $\scl_{E_0} N$ (resp., by  $\sclco_{E_0} N$).

\begin{proposition} 
\label{pro_3.2zab}
Let $\emptyset \ne N\subset Y$.
\begin{itemize}[nosep]
\item[$(i)$] $\scl_{E_0} N = \bigcup_{v\in E}\cl(N\cap (E_0+v))$,  
\item[$(ii)$] If $N$ is $E_0$-sectionally convex then $\scl_{E_0} N$ is $E_0$-sectionally convex,
\item[$(iii)$] $ \sclco_{E_0} N  =     \scl_{E_0}(\sco_{E_0} N) =  \bigcup_{v\in E}\cl\co(N\cap (E_0+v))  $.
 \end{itemize} 
\end{proposition}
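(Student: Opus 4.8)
The plan is to prove the three assertions in order, using Proposition \ref{pro_form_sco} for the parts involving $\sco_{E_0}$ and mimicking its proof structure for the parts involving $\scl_{E_0}$ and $\sclco_{E_0}$. For part $(i)$, let $M := \bigcup_{v\in E}\cl(N\cap(E_0+v))$. I would show $M$ is the smallest $E_0$-sectionally closed set containing $N$ by checking three things exactly as in the proof of Proposition \ref{pro_form_sco}: first, $N\subset M$ since $N = \bigcup_{v\in E}(N\cap(E_0+v)) \subset M$; second, $M$ is $E_0$-sectionally closed, which follows by writing $M = M_1\cup M_2$ relative to a fixed $\bar v$, where $M_1 = \cl(N\cap(E_0+\bar v))$ and $M_2$ is the union over $v\notin E_0+\bar v$, and using that $E_0$ closed implies $E_0+\bar v$ is closed, so $\cl(N\cap(E_0+v))\subset E_0+v$, hence $M_2\cap(E_0+\bar v)=\emptyset$ by the same disjointness of distinct cosets; third, any $E_0$-sectionally closed $M'\supset N$ satisfies $M'\cap(E_0+v)\supset\cl(N\cap(E_0+v))$ for each $v$, so $M'\supset M$. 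The key new ingredient compared to Proposition \ref{pro_form_sco} is that $E_0+v$ being a \emph{closed} subset lets us conclude $\cl(N\cap(E_0+v))\subset E_0+v$, which is where closedness of $E_0$ is genuinely used.

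For part $(ii)$, assume $N$ is $E_0$-sectionally convex. I would use $(i)$ to write $\scl_{E_0}N = \bigcup_{v\in E}\cl(N\cap(E_0+v))$, then fix $\bar v\in E$ and observe, again via coset disjointness, that $\scl_{E_0}N\cap(E_0+\bar v) = \cl(N\cap(E_0+\bar v))$. Since $N\cap(E_0+\bar v)$ is convex by hypothesis, its closure $\cl(N\cap(E_0+\bar v))$ is convex (closure of a convex set in a topological vector space is convex). Hence each section of $\scl_{E_0}N$ is convex, i.e., $\scl_{E_0}N$ is $E_0$-sectionally convex.

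For part $(iii)$, I would establish the two equalities from left to right. For $\sclco_{E_0}N = \scl_{E_0}(\sco_{E_0}N)$: by definition $\sco_{E_0}N$ is $E_0$-sectionally convex and contains $N$, and by $(ii)$ its $E_0$-sectional closure $\scl_{E_0}(\sco_{E_0}N)$ is still $E_0$-sectionally convex; it is $E_0$-sectionally closed by $(i)$ (any $E_0$-sectional closure is $E_0$-sectionally closed, which one checks directly or has just checked in $(i)$); and it contains $N$. So it is an $E_0$-sectionally closed and convex set containing $N$, giving $\sclco_{E_0}N\subset\scl_{E_0}(\sco_{E_0}N)$. Conversely, any $E_0$-sectionally closed and convex set containing $N$ contains $\sco_{E_0}N$ (being in particular $E_0$-sectionally convex) and then, being $E_0$-sectionally closed, contains $\scl_{E_0}(\sco_{E_0}N)$; hence $\sclco_{E_0}N\supset\scl_{E_0}(\sco_{E_0}N)$. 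For the second equality, apply $(i)$ to the set $\sco_{E_0}N$ and use Proposition \ref{pro_form_sco}:
\[
\scl_{E_0}(\sco_{E_0}N) = \bigcup_{v\in E}\cl\big((\sco_{E_0}N)\cap(E_0+v)\big) = \bigcup_{v\in E}\cl\,\co(N\cap(E_0+v)),
\]
where the last step uses that $(\sco_{E_0}N)\cap(E_0+v) = \co(N\cap(E_0+v))$, which is precisely the content of Proposition \ref{pro_form_sco} (that identity being extracted from its proof, where it is shown that the union-of-hulls set has $v$-section equal to $\co(N\cap(E_0+v))$).

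The main obstacle is not conceptual but bookkeeping: one must be careful, in both $(i)$ and $(iii)$, that the section $\big(\bigcup_{v\in E}\cl(N\cap(E_0+v))\big)\cap(E_0+\bar v)$ really collapses to the single term $\cl(N\cap(E_0+\bar v))$. This rests on two facts about cosets of a subspace — that $E_0+v = E_0+\bar v$ when $v\in E_0+\bar v$ and $(E_0+v)\cap(E_0+\bar v)=\emptyset$ otherwise — together with the inclusion $\cl(N\cap(E_0+v))\subset E_0+v$, valid because $E_0+v$ is closed. Once these are in place the rest is a routine ``smallest set'' argument identical in shape to the proof of Proposition \ref{pro_form_sco}.
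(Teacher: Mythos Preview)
Your proposal is correct and follows essentially the same approach as the paper: for $(i)$ and $(ii)$ you reproduce exactly the paper's argument (mimic the three-step proof of Proposition \ref{pro_form_sco}, then use the coset-section identity $(\scl_{E_0}N)\cap(E_0+\bar v)=\cl(N\cap(E_0+\bar v))$), and for $(iii)$ your double-inclusion proof of $\sclco_{E_0}N=\scl_{E_0}(\sco_{E_0}N)$ matches the paper's. The only cosmetic difference is in the last equality of $(iii)$: the paper says it ``follows from the same argument as in the proof of Proposition \ref{pro_form_sco}'' (i.e., redo the three-step argument with $\cl\co$ in place of $\co$), whereas you chain $(i)$ applied to $\sco_{E_0}N$ with the section identity $(\sco_{E_0}N)\cap(E_0+v)=\co(N\cap(E_0+v))$ extracted from that proof---same ingredients, same result.
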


\begin{proof}
$(i)$ The conclusion follows from  the same argument as in the proof of Proposition \ref{pro_form_sco}.

$(ii)$ Assume that $N$ is a $E_0$-sectionally convex.  
 Take $\bar v\in E$ and  we will prove that  $(\scl_{E_0} N) \cap (E_0+\bar v)$ is convex.
 It follows from the same argument as in the second  part   of the  proof  of Proposition \ref{pro_form_sco} and from (i) that
 \[
 \scl_{E_0} N  =  \Big[\bigcup_{v\in E_0+\bar v}\cl(N\cap (E_0+v)) \Big] \bigcup  \Big[\bigcup_{v\in E\setminus (E_0+\bar v)}\cl(N\cap (E_0+v)) \Big],
 \]
 and hence, 
\begin{align*}
(\scl_{E_0} N) \cap (E_0+\bar v)= \Big[ M_1\cap  (E_0+\bar v)\Big] \cup \Big[ M_2\cap  (E_0+\bar v)\Big],  
\end{align*}
where  $M_1:=\bigcup_{v\in E_0+\bar v}\cl(N\cap (E_0+v))$ and $M_2:=\bigcup_{v\in E\setminus (E_0+\bar v)}\cl(N\cap (E_0+v))$.
Use the similar argument as in the proof of  Proposition \ref{pro_form_sco} we can show that $M_1\cap (E_0+\bar v)=\cl(N\cap (E_0+\bar v))$ and $M_2\cap (E_0+\bar v)=\emptyset$, and hence, $(\scl_{E_0} N) \cap (E_0+\bar v)= \cl(N\cap (E_0+\bar v))$. On the other hand, as $N$ is $E_0$-sectionally convex, the set $N\cap (E_0+\bar v)$ is convex. So, $  \cl(N\cap (E_0+\bar v))$ is convex, as well, and we are done.

$(iii)$   Observe that   $\scl_{E_0}(\sco_{E_0} N)=\sclco_{E_0} N$. 
Indeed, as $\sclco_{E_0} N$ is a $E_0$-sectionally  convex subset containing $N$, one has $\sclco_{E_0} N\supset \sco_{E_0} N$. Note that $\sclco_{E_0} N $ is also $E_0$-sectionally  closed, so $\sclco_{E_0} N  \supset \scl_{E_0}(\sco_{E_0} N) $.

Conversely, by Proposition \ref{pro_3.2zab}$(ii)$, $\scl_{E_0} (\sco_{E_0} N)$ is a $E_0$-sectionally  closed and $E_0$-sectionally  convex subset containing $N$ yielding $\scl_{E_0}(\sco_{E_0} N) \supset \sclco_{E_0} N$.

Lastly, the equality $\sclco_{E_0} N = \bigcup_{v\in E}\cl\co(N\cap (E_0+v))  $  follows from the same argument as in the proof of Proposition \ref{pro_form_sco}. 
\end{proof}

\subsection{Sectional convexity and sectional closedness of epigraphs of conjugate mappings}

Let  $k\in Y\setminus\{0_Y\}$. For $x^*\in X^*$, we define  the mapping $k \cdot x^*\colon X\to Y$   by 
\begin{equation} \label{kx*}
(k\cdot x^*)(x)=\langle x^*,x\rangle k,\;  \forall  x\in X. 
\end{equation}  

Throughout this paper,  we are dealing with    the space  $E=\L(X,Y)\times Y$ and its subspace 
\begin{equation} \label{Ek}
E_k=  k\cdot (X^*\times \RR):=   \{(k\cdot x^*, rk): x^*\in X^*,\; r\in\mathbb{R}\}.   \end{equation} 
So, for the sake of simplicity, 
a subset $\mathcal{E}\subset \L(X,Y)\times Y$ is   $E_k$-sectionally  convex  then we say  that it is {\it $k$-sectionally convex}  and for the $E_k$-sectionally  convex hull of $\mathcal{E}$, we write  $\sco_k \mathcal{E}$ (instead of $\sco_{E_k}\mathcal{E}$) and call it $k$-sectionally  convex hull of $\mathcal{E}$. The same way applies  to   the    ``$E_k$-sectional closedness''  of $\mathcal{E}$ as well.

Turning back  to the  case when  $Y=\mathbb{R}$,  for each $\alpha\in \RR\setminus\{0\}$ (playing the role of $k$), one has $\alpha\cdot (X^*\times \RR)+(\bar x^*,\bar r)=X^*\times \mathbb{R} $ for all    $(\bar x^*,\bar r)\in X^*\times \mathbb{R}$. So, for given $\mathcal{E}\subset X^*\times \RR$, it holds $\mathcal{E}\cap (\alpha\cdot (X^*\times \RR)+(\bar x^*,\bar r))=\mathcal{E}$ for any $(\bar x^*,\bar r)\in X^*\times \mathbb{R}$. In  other words, in this case,  the notions of  ``$\alpha$-sectionally convex'', ``$\alpha$-sectionally closed'',  ``$\alpha$-sectionally convex  hull",  and ``$\alpha$-sectional closure"    collapse  to the  usual    ``convex", ``closed", ``convex hull",  and ``closure"   in convex analysis, respectively.

It is worth noticing that  $\epi F^*$, in general,  is not  a convex  subset of $\L(X, Y) \times Y$   even when $F$ is {a} linear continuous mapping  (see   \cite[Example 2.6]{epi}). However, as we will see in the next  proposition,  it is  always $k$-sectionally convex for any  $k\in K\setminus\{0_Y\}$.  

\begin{proposition}\label{epirco}
Let $F\colon X\to Y^\bullet$ be a proper mapping.  Then,   $\epi F^*$  is  $k$-sectionally  convex for each {$k\in   K\setminus\{0_Y\}$}.   
\end{proposition}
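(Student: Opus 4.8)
The plan is to fix $k \in K \setminus \{0_Y\}$, take an arbitrary translation $E_k + (\bar L, \bar y)$ of the subspace $E_k$ defined in \eqref{Ek}, and show that the section $\epi F^\ast \cap (E_k + (\bar L, \bar y))$ is convex. A generic point of $E_k + (\bar L, \bar y)$ has the form $(\bar L + k \cdot x^\ast,\, \bar y + r k)$ for $(x^\ast, r) \in X^\ast \times \RR$, so it suffices to prove: if $(\bar L + k \cdot x^\ast_1,\, \bar y + r_1 k) \in \epi F^\ast$ and $(\bar L + k \cdot x^\ast_2,\, \bar y + r_2 k) \in \epi F^\ast$, and $\lambda \in \,]0,1[$, then the convex combination $(\bar L + k \cdot (\lambda x^\ast_1 + (1-\lambda) x^\ast_2),\, \bar y + (\lambda r_1 + (1-\lambda) r_2) k)$ again lies in $\epi F^\ast$. (Note $k \cdot x^\ast$ is linear in $x^\ast$, so the first component of the convex combination is indeed of the required form.)

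The key tool is the characterization \eqref{eq111}: $(L, y) \in \epi F^\ast$ if and only if $y - L(x) + F(x) \notin -\inte K$ for all $x \in \dom F$. Applying this to the two hypotheses, for each fixed $x \in \dom F$ we have
\begin{align*}
\bar y + r_1 k - \bar L(x) - \la x^\ast_1, x\ra k + F(x) &\notin -\inte K,\\
\bar y + r_2 k - \bar L(x) - \la x^\ast_2, x\ra k + F(x) &\notin -\inte K.
\end{align*}
Here $F(x) \in Y$ since $x \in \dom F$ and $F$ is proper, so the quantity $w := \bar y - \bar L(x) + F(x) \in Y$ is well defined, and the two conditions read $w + \alpha_i k \notin -\inte K$ with $\alpha_i := r_i - \la x^\ast_i, x\ra$ for $i = 1, 2$. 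This is precisely the hypothesis of Lemma \ref{pro_2.1aaa} (with $y = w$, $k_0 = k \in K$), so we conclude $w + [\lambda \alpha_1 + (1-\lambda)\alpha_2] k \notin -\inte K$. Since $\lambda \alpha_1 + (1-\lambda)\alpha_2 = (\lambda r_1 + (1-\lambda) r_2) - \la \lambda x^\ast_1 + (1-\lambda) x^\ast_2, x\ra$, this says exactly that
\[
\bar y + (\lambda r_1 + (1-\lambda) r_2) k - \bar L(x) - \la \lambda x^\ast_1 + (1-\lambda) x^\ast_2, x\ra k + F(x) \notin -\inte K.
\]
As $x \in \dom F$ was arbitrary, \eqref{eq111} gives that the convex combination belongs to $\epi F^\ast$, which is what we needed.

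I expect no serious obstacle here: the statement is essentially a packaging of Lemma \ref{pro_2.1aaa} applied coordinatewise-in-$x$ together with the dual-space characterization \eqref{eq111} of $\epi F^\ast$. The only points requiring a little care are (a) verifying that an arbitrary element of the section has the stated parametric form and that convex combinations stay in the section's affine flat — which is immediate from $E_k$ being a linear subspace and $x^\ast \mapsto k \cdot x^\ast$ being linear — and (b) making sure $F(x) \in Y$ (not $\pm\infty_Y$) when $x \in \dom F$, so that the algebra in $Y$ is legitimate; this uses properness of $F$ and the fact that \eqref{eq111} only quantifies over $x \in \dom F$. If one wanted to avoid invoking \eqref{eq111}, an alternative is to argue directly from $\wsup$: writing $F^\ast(L) + K = \wsup\{L(x) - F(x) : x \in X\} + K = \{L(x) - F(x) : x \in \dom F\} + K + \cdots$, but this is more cumbersome, so I would stick with \eqref{eq111}.
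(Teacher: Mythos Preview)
Your proof is correct and follows essentially the same approach as the paper's: parametrize the section via $(x^\ast,r)$, invoke the characterization \eqref{eq111} at each $x\in\dom F$, and then apply Lemma~\ref{pro_2.1aaa} to pass to the convex combination. Your write-up is in fact slightly more careful about why $F(x)\in Y$ and why convex combinations stay in the affine flat $E_k+(\bar L,\bar y)$.
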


\begin{proof}  Let  $k\in  K\setminus\{0_Y\}$,  $(L, y) \in \mathcal{L}(X,Y)\times Y$, and  let  $E_k=k\cdot (X^*\times \RR)$ as in \eqref{Ek}. We will prove that $(\epi F^*)\cap [E_k+(L,y)]$ is a convex subset of $\L(X,Y)\times Y$.
For this, take  $a_1,a_2\in  {(\epi F^*)\cap [E_k+(L,y)]}$,   $\lambda\in ]0,1[$, it suffices to show that  $\lambda a_1+(1-\lambda)a_2 \in  \epi F^*$.    
As $a_i\in  E_k+(L,y)$, there exists $(x^*_i,r_i)\in  X^*\times \mathbb{R}$ such that
$a_i=k(x^*_i,r_i)+(L,y)$, $i = 1, 2$. On the other hand, 
as  $a_i\in \epi F^*$,  one has (see \eqref{eq111}) 
\begin{equation*}
y+r_ik -L(x)-\langle x_i^*, x\rangle k+F(x) \notin -\inte K,\; \forall x\in \dom F.\label{ptms15}
\end{equation*}
Then, according to Lemma \ref{pro_2.1aaa}, we get
 $$y+[\lambda r_1+(1-\lambda)r_2]k-L(x)-\langle \lambda x^*_1+(1-\lambda)x^*_2, x\rangle k+F(x)\notin -\inte K,\; \forall x\in \dom F$$
which, again by   \eqref{eq111},  yields  $\lambda a_1+(1-\lambda)a_2 \in  \epi F^*$ and 
the proof is complete.
\end{proof}

It is worth observing  that $k\cdot (X^*\times \RR)=(-k)\dot (X^*\times \RR)$ for any {$k\in K$}. So, by Proposition  \ref{epirco},  $\epi F^*$ is $k$-sectionally  convex for all {$k\in [K\cup(-K)]\setminus\{0_Y\}$}. However, the conclusion might not be true  when $k\notin K\cup (- K)$ as shown in  the next example.

\begin{example}
Take $X=\mathbb{R}$, $Y=\mathbb{R}^2$, $K=\mathbb{R}^2_+$, $F\colon \mathbb{R}\to \mathbb{R}^2$ the null mapping. Then $\mathcal{L}(X,Y)=\mathbb{R}^2$ and we get from Example 2.2 in \cite{epi} that $\epi F^* =\bigcup_{i=1}^4 N_i$ with
\begin{align*}
N_1 &= \{ (0,0,y_1,y_2):y_1\ge 0 \ \text{or} \  y_2\ge 0\}, \ \ 
N_2 = \{ (\alpha,\beta ,y_1,y_2):\alpha\beta<0, y_2\ge \frac{\beta}{\alpha} y_1\},\\
N_3 &= \{ (\alpha,0,y_1,y_2):\alpha\ne 0, y_2\ge 0\},\ \
N_4 = \{ (0,\beta,y_1,y_2):\beta\ne 0, {y_1\ge 0}\}.
\end{align*}
Now, take   $k=(1,-1)$, $E_k=k\cdot (X^*\times \RR)$, $L=(0,0)$ and $y=(0,-1)$. Then 
$E_k+(L,y)=\{(\alpha,- \alpha, y_1, -y_1-1): \alpha, y_1\in \mathbb{R}\}$, and  hence,
$$(\epi F^*)\cap [E_k+(L,y)] =  \{(0,0, y_1, -y_1-1): y_1\in \mathbb{R},\; y_1\ge 0\ {\rm or }\  y_1\le -1\}, $$
showing that   $(\epi F^*)\cap [E_k+(L,y)]$ is not a convex set, and consequently, $\epi F^*$ is not $(1,-1)$-sectionally convex.
\end{example}

\begin{proposition}\label{epirco2}
	Let $F\colon X\to Y^\bullet, \; G\colon X\to Z^\bullet$ be proper mappings, and $k\in  K\setminus\{0_Y\}$.  
	Then the set $\bigcup_{z\in S^+} \epi (F+(k\cdot z^*)\circ G)^*$ is  $k$-sectionally  convex, where   $k \cdot z^* \in \L(Z,Y) $ is the mapping defined  as in \eqref{kx*}. 
\end{proposition}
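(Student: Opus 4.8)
The plan is to reduce this to Proposition~\ref{epirco} by exhibiting the union $\bigcup_{z^*\in S^+}\epi(F+(k\cdot z^*)\circ G)^*$ as the epigraph of a single conjugate mapping, or at least as a set on which the argument of Proposition~\ref{epirco} applies directly. First I would fix $k\in K\setminus\{0_Y\}$, fix $(L,y)\in\L(X,Y)\times Y$, set $E_k=k\cdot(X^*\times\RR)$ as in \eqref{Ek}, pick $a_1,a_2\in\big(\bigcup_{z^*\in S^+}\epi(F+(k\cdot z^*)\circ G)^*\big)\cap[E_k+(L,y)]$ and $\lambda\in\,]0,1[$, and aim to show $\lambda a_1+(1-\lambda)a_2$ lies again in the union. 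By definition there are $z_1^*,z_2^*\in S^+$ with $a_i\in\epi(F+(k\cdot z_i^*)\circ G)^*$, and since $a_i\in E_k+(L,y)$ there are $(x_i^*,r_i)\in X^*\times\RR$ with $a_i=(k\cdot x_i^*,r_ik)+(L,y)$.

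The key step is then to use the characterization \eqref{eq111} for the mapping $F+(k\cdot z_i^*)\circ G$: on $\dom F\cap\dom G$ one has, for all such $x$,
\begin{equation*}
y+r_ik-L(x)-\langle x_i^*,x\rangle k+F(x)+\langle z_i^*,G(x)\rangle k\notin-\inte K .
\end{equation*}
Here I would group the scalar coefficients of $k$: writing $\alpha_i(x):=r_i-\langle x_i^*,x\rangle+\langle z_i^*,G(x)\rangle$, the displayed condition reads $y-L(x)+F(x)+\alpha_i(x)k\notin-\inte K$ for $i=1,2$. Applying Lemma~\ref{pro_2.1aaa} (with $y$ there taken to be $y-L(x)+F(x)$ and $k_0=k$) yields $y-L(x)+F(x)+[\lambda\alpha_1(x)+(1-\lambda)\alpha_2(x)]k\notin-\inte K$ for every $x\in\dom F\cap\dom G$. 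Now $\lambda\alpha_1(x)+(1-\lambda)\alpha_2(x)=[\lambda r_1+(1-\lambda)r_2]-\langle\lambda x_1^*+(1-\lambda)x_2^*,x\rangle+\langle\lambda z_1^*+(1-\lambda)z_2^*,G(x)\rangle$, so setting $x^*:=\lambda x_1^*+(1-\lambda)x_2^*$, $r:=\lambda r_1+(1-\lambda)r_2$, and $z^*:=\lambda z_1^*+(1-\lambda)z_2^*\in S^+$ (here one uses that $S^+$ is a convex cone), this is exactly the condition \eqref{eq111} saying that $(k\cdot x^*,rk)+(L,y)\in\epi(F+(k\cdot z^*)\circ G)^*$. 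Since $\lambda a_1+(1-\lambda)a_2=(k\cdot x^*,rk)+(L,y)$, it belongs to the union, which proves $k$-sectional convexity.

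One subtlety to handle carefully is the domain of the mapping $F+(k\cdot z^*)\circ G$: by the conventions in Section~2 this is $\dom F\cap\dom G$ (where $(k\cdot z^*)\circ G$ is finite exactly on $\dom G$, since $k\cdot z^*$ is everywhere-defined and continuous), so the quantifier ``$\forall x\in\dom F$'' in \eqref{eq111} becomes ``$\forall x\in\dom F\cap\dom G$'', and this is consistent across $i=1,2$ and the convex combination. Another point worth a line is that $F+(k\cdot z^*)\circ G$ is still a proper mapping whenever $\dom F\cap\dom G\neq\emptyset$, so that \eqref{eq111} is applicable; if that intersection is empty the union is a union of $\epi$'s of the "$+\infty_Y$ everywhere" mapping and the statement is trivial (or one reads \eqref{eq111} vacuously). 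I do not expect a genuine obstacle here — the whole argument is a bookkeeping extension of the proof of Proposition~\ref{epirco}, the only new ingredient being that the extra term $\langle z^*,G(x)\rangle k$ is linear in $z^*$, so convex combinations of the $z_i^*$ interact correctly with Lemma~\ref{pro_2.1aaa}; the mild care needed is precisely in tracking domains and in invoking convexity of the cone $S^+$.
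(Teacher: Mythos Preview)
Your proof is correct and follows essentially the same route as the paper's own argument: fix $(L,y)$, write $a_i=(k\cdot x_i^*,r_ik)+(L,y)\in\epi(F+(k\cdot z_i^*)\circ G)^*$, apply \eqref{eq111} and Lemma~\ref{pro_2.1aaa} to the scalar coefficients of $k$, and observe that $\bar z^*:=\lambda z_1^*+(1-\lambda)z_2^*\in S^+$ gives $\lambda a_1+(1-\lambda)a_2\in\epi(F+(k\cdot\bar z^*)\circ G)^*$. Your additional remarks on domains and properness are more careful than the paper (which writes ``$\forall x\in X$'' and relies on the convention that $+\infty_Y\notin-\inte K$), but the underlying argument is identical.
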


\begin{proof}   Let   $E_k=k\cdot (X^*\times \RR)$ (defined by \eqref{Ek}), and set 
$$\mathcal{M}_k:=\bigcup_{z^*\in S^+} \epi (F+(k\cdot z^*)\circ G)^*.$$ 
Take  $(L, y) \in \mathcal{L}(X,Y)\times Y$ and we  will show  that the set ${\mathcal{M}_k}\cap [E_k+(L,y)]$ is  convex.
The proof goes parallelly as that of Proposition \ref{epirco}.  Take  $a_1,a_2\in {\mathcal{M}_k}\cap (E_k+(L,y))$,   $\lambda\in ]0,1[$, it suffices to show that  $\lambda a_1+(1-\lambda)a_2 \in   {\mathcal{M}_k}$.    
As $a_i\in {\mathcal{M}_k}\cap (E_k+(L,y))$,  there exist $(x^*_i,r_i)\in  X^*\times \mathbb{R}$ and $z^*_i\in S^+$ such that
$a_i=k(x^*_i,r_i)+(L,y)   \in \epi (F + (k\cdot z^*)\circ G)^\ast $ for $i = 1, 2$.  By  \eqref{eq111}, one has 
\begin{align*}
y+r_ik -L(x)-\la x_i^*,x\ra k+F(x)+  (z^*_i\circ G)(x)k \notin -\inte K,\; \forall x\in X, \forall i = 1, 2.\label{ptms15bis}
\end{align*}
It now follows from Lemma  \ref{pro_2.1aaa} that 
\begin{eqnarray}\label{eq_10bis}
\!\!\! \lambda a_1\!+\! (1-\lambda )a_2\! = \! 
y\!+\! \bar r k -L(x)\!-\!\la \bar x^*, x\ra k\! + \! F(x)\!  + \! (\bar z^*\circ G)(x)k\notin \! -\inte K,\! \forall x\in X, 
\end{eqnarray}
where $ \bar r =\lambda r_1+(1-\lambda)r_2$, $\bar x^*= \lambda x_1^* + (1-\lambda_2) x_2^*$, and   $\bar z^*:=\lambda z^*_1+(1-\lambda )z^*_2$ (note that $\bar z^*  \in S^+$  
  as $z^*_1,z^*_2\in S^+$).   Again, by  \eqref{eq111},    \eqref{eq_10bis} means that  $\lambda a_1+(1-\lambda )a_2\in    \epi(F+(k\cdot \bar z^*)\circ G)^*\subset \mathcal{M}_k$  and  the proof is complete.
\end{proof}

\begin{remark}\label{rem_3.1_nw}
According to \cite[Lemma 3.6]{epi},  for  any proper mapping  $F\colon X\to Y^\bullet$, 
the set  $ \epi F^*$ is always closed and consequently, it is $k$-sectionally  closed for each  $k\in Y\setminus\{0\}$.
\end{remark}

\medskip

\section{ Epigraphs of conjugate mappings via sectionally convex hulls}

We are now concerning  the \textit{robust vector optimization problem} of the model \cite{Robust}, \cite{DL17VJM}:   
\begin{equation}
\begin{array}{rrl}
\mathrm{(RVP)}\ \  & \wmin\left\{ F(x): x\in C,\; G_u(x)\in
-S,\;\forall u\in\mathcal{U}\right\} , & 
\end{array}
\label{rvop}
\end{equation}
where, as in previous sections,    $X, Y, Z$ are lcHtvs, $K$ is a  closed and convex cone in $Y$ with nonempty interior,  
and $S$ is a closed, convex cone in $Z$,
$\mathcal{U}$ is an \textit{uncertainty set}, $F\colon X\rightarrow {Y}^\bullet,$ $G_u\colon X\rightarrow Z^\bullet$ are proper mappings,  and  $\emptyset \ne C\subset X$.
The feasible set of (RVP) is 
\begin{equation} \label{feas-A}
A:=C\cap \left(\bigcap_{u\in\mathcal{U}}G_u^{-1}(-S)\right).\end{equation} 
We assume through out this paper that 
$A\cap\dom F\ne \emptyset$.

In this section we will establish various representations of the epigraph of the conjugate mapping $(F +I_A)^\ast$,  $\epi (F + I_A)^\ast$.  The representations hold under ``closure" signs and without any  constraint qualification conditions and so they are  called asymptotic representations.   These representations  will play a crucial role in establishing  the main results of the next sections:  robust vector Farkas-type results and duality for the problem (RVP).  

Concerning the problem(RVP), we recall  the  \emph{qualifying set} \cite{Robust}  and the  \emph{weak qualifying set} \cite{DL17VJM} defined  respectively  as follows:  
\begin{align}
\mathcal{A} &:=\bigcup_{(T,u)\in \mathcal{L}_{+}(S,K)\times \U}\epi (F+I_C+T\circ G_u)^*,    \label{A}\\
\mathcal{B}&:=\bigcup_{(T,u)\in \mathcal{L}^w_{+}(S,K)\times \U}  
   \left( \bigcap_{v\in I^*_{-S}(T)} [\epi(F+I_C+T\circ G_u )^*+(0_{\mathcal{L}},v) ]\right).\label{B}
\end{align}
For  $k\in \inte K$, we now introduce  another  {\it  qualifying set}  $ \A_{k} $  defined by  
\begin{equation} \label{Ak} 
\A_{k}:= \bigcup_{(z^*,u)\in S^+\times \U}\epi (F+I_C+(k\cdot z^*)\circ G_u)^*,
\end{equation}
where  $k\cdot z^*\colon Z\to Y$ defined by $(k\cdot z^*)(z)=\la z^*,z\ra k$ for all $z \in Z$ (see also \eqref{kx*}). 

In the case when $Y=\mathbb{R}$ and $K=\mathbb{R}_+$  all the sets $\A$,  $\B$, and $\A_k$ collapse  to the usual qualifying set (see \cite{DMVV17-Robust-SIAM})  $\bigcup_{(z^*,u)\in S^+\times \U} \epi (F+i_C+z^*\circ G_u)^*.$

The   relations between these  sets and $\epi(F+I_A)^*$ are given in the next    proposition.  
\begin{proposition} 
	\label{epi_include}
	It holds
		$\epi (F+I_A)^*\supset \B\supset \A \, \supset\A_k$   for all $k\in \inte K$.
\end{proposition}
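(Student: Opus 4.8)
The plan is to establish the chain from right to left. For $\A_k\subseteq\A$ I would simply observe that for every $z^*\in S^+$ and $k\in\inte K\subseteq K$ the operator $k\cdot z^*\colon Z\to Y$ belongs to $\L_+(S,K)$: for $s\in S$ one has $\la z^*,s\ra\ge 0$, so $(k\cdot z^*)(s)=\la z^*,s\ra k\in K$ since $K$ is a cone. Hence every set $\epi(F+I_C+(k\cdot z^*)\circ G_u)^*$ in the union defining $\A_k$ is one of the sets $\epi(F+I_C+T\circ G_u)^*$ in the union defining $\A$ (take $T=k\cdot z^*$), and taking unions over $(z^*,u)\in S^+\times\U$ gives $\A_k\subseteq\A$.

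For $\A\subseteq\B$ I would first note $\L_+(S,K)\subseteq\L^w_+(S,K)$, which holds because $K$ is proper, so $K\cap(-\inte K)=\emptyset$ and $T(S)\subseteq K$ forces $T(S)\cap(-\inte K)=\emptyset$. Next, for $T\in\L_+(S,K)$ I would show $I^*_{-S}(T)\subseteq -K$: computing the conjugate of the indicator map, $I^*_{-S}(T)=\wsup\{T(z):z\in-S\}=\wsup(T(-S))$, and since $T(-S)=-T(S)\subseteq-K$ we get $T(-S)-\inte K\subseteq -K-\inte K=-\inte K$ (using $K+\inte K=\inte K$), hence $\cl(T(-S)-\inte K)\subseteq-K$; by Lemma \ref{lemextra2}(a), $I^*_{-S}(T)\subseteq\cl(T(-S)-\inte K)\subseteq -K$. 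Finally, $\epi H^*$ is stable under adding elements of $K$ in the $Y$-coordinate, so for any $v\in -K$ one has $\epi H^*\subseteq\epi H^*+(0_{\L},v)$ (if $(L,y)\in\epi H^*$ then, since $-v\in K$, $(L,y-v)\in\epi H^*$, i.e. $(L,y)\in\epi H^*+(0_{\L},v)$). Applying this with $H=F+I_C+T\circ G_u$ and all $v\in I^*_{-S}(T)$ shows $\epi H^*\subseteq\bigcap_{v\in I^*_{-S}(T)}[\epi H^*+(0_{\L},v)]$, one of the terms of the union defining $\B$ (as $(T,u)\in\L_+(S,K)\times\U\subseteq\L^w_+(S,K)\times\U$); taking unions over $(T,u)$ gives $\A\subseteq\B$.

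The main work is $\B\subseteq\epi(F+I_A)^*$. Fix $(L,y)\in\B$, so there is $(T,u)\in\L^w_+(S,K)\times\U$ with $(L,y-v)\in\epi(F+I_C+T\circ G_u)^*$ for every $v\in I^*_{-S}(T)$. By the characterisation \eqref{eq111}, and since $\dom(F+I_A)=A\cap\dom F$ with $(F+I_A)(x)=F(x)$ there, it suffices to show $y-L(x)+F(x)\notin -\inte K$ for every $x\in A\cap\dom F$. Fix such an $x$: then $x\in C$ and $G_u(x)\in-S\subseteq Z$, so $x\in\dom(F+I_C+T\circ G_u)$ and \eqref{eq111} applied to $(L,y-v)$ yields, writing $p:=y-L(x)+F(x)$ and $M:=T(-S)$,
\[
p+\bigl(T(G_u(x))-v\bigr)\notin -\inte K\qquad\text{for all }v\in I^*_{-S}(T)=\wsup M ,
\]
where $T(G_u(x))\in M$. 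I would then argue $\wsup M\neq\{+\infty_Y\}$: since $T\in\L^w_+(S,K)$, $M\cap\inte K=\emptyset$, so separating the convex cone $M$ from the open convex cone $\inte K$ produces $y^*\in K^+\setminus\{0_{Y^*}\}$ with $\la y^*,m\ra\le 0$ on $M$ and $\la y^*,\kappa\ra>0$ for $\kappa\in\inte K$, which rules out $+\infty_Y\in\wsup M$. Hence Lemma \ref{lemextra2}(b) gives $M-\inte K=\wsup M-\inte K$, and Lemma \ref{lemextra2}(a) gives $\wsup M\cup(M-\inte K)=\cl(M-\inte K)$. Using \eqref{eq_1aaa}, the displayed non-membership extends from $v\in\wsup M$ to all $v\in\wsup M-\inte K=M-\inte K$ (if $v=v_0-\kappa_0$ with $v_0\in\wsup M$, $\kappa_0\in\inte K$, then $p+T(G_u(x))-v=[p+T(G_u(x))-v_0]+\kappa_0\notin -\inte K$), hence to all $v\in\cl(M-\inte K)$. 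Since $T(G_u(x))\in M\subseteq\cl(M-\inte K)$, taking $v=T(G_u(x))$ gives $p\notin -\inte K$, as required.

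I expect the delicate step to be this last one: the set-valued bookkeeping with $\wsup$, in particular verifying $\wsup(T(-S))\neq\{+\infty_Y\}$ so that Lemma \ref{lemextra2}(b) applies and the closure identity $\cl(M-\inte K)=\wsup M\cup(M-\inte K)$ is available. Everything else reduces to routine applications of \eqref{eq111}, \eqref{eq_1aaa}, and the cone arithmetic $K+\inte K=\inte K$.
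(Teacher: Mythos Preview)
Your argument is correct. The three inclusions are established cleanly: $\A_k\subseteq\A$ by the observation that $k\cdot z^*\in\L_+(S,K)$; $\A\subseteq\B$ via the computation $I^*_{-S}(T)\subseteq -K$ for $T\in\L_+(S,K)$ together with the $K$-stability of $\epi H^*$ in the $Y$-coordinate; and $\B\subseteq\epi(F+I_A)^*$ by a direct manipulation using \eqref{eq111}, the formula $\wsup M\cup(M-\inte K)=\cl(M-\inte K)$ from Lemma~\ref{lemextra2}(a), the identity $\wsup M-\inte K=M-\inte K$ from Lemma~\ref{lemextra2}(b), and the cone arithmetic in \eqref{eq_1aaa}. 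The check that $\wsup(T(-S))\neq\{+\infty_Y\}$ is handled; your separation argument works, though a shorter route is to note that $+\infty_Y\in\wsup M$ would force $M\cap\inte K\neq\emptyset$, contradicting $T\in\L^w_+(S,K)$.

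The paper takes a different, more modular route. It introduces the intermediate feasible sets $A_u:=C\cap G_u^{-1}(-S)$ and invokes the proof of \cite[Theorem~4.2]{epi} to obtain, for each fixed $u$, the chain $\epi(F+I_{A_u})^*\supseteq\B_u\supseteq\A_u$ (the $u$-slices of $\B$ and $\A$); it then takes the union over $u\in\U$ and finishes with the elementary inclusion $\bigcup_{u}\epi(F+I_{A_u})^*\subseteq\epi(F+I_A)^*$, which follows immediately from \eqref{eq111} since $A\subseteq A_u$. What the paper gains is brevity by outsourcing the $\wsup$ bookkeeping to a known result; what your approach gains is self-containment, since every step is derived from tools already present in this paper (Lemma~\ref{lemextra2}, \eqref{eq111}, \eqref{eq_1aaa}) without appeal to the external reference.
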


\begin{proof} It is easy to see that $k\cdot z^*\in \L_+(S,K)$ whenever $z^*\in S^+$ and $k\in \inte K$. So,  
\begin{equation} \label{eqaa} 
\A_k\subset \A , \ \forall   k\in \inte K.
\end{equation} 
Now, for each $u\in \U$, let $A_u:= C\cap G_u^{-1}(-S)$.  Repeat the same argument as in  the first part of the proof of Theorem 4.2 (namely, the proof of (27))  in \cite{epi}\footnote{Note that in the first part  of the proof of 	\cite[Theorem 4.2]{epi}  no assumptions  on the convexity or closedness of the mappings $F$ and $G$ are needed. }, we get for each $u \in \U$, 
	\begin{align*}
		\epi(F+I_{A_u})^*&\supset \bigcup_{T\in \mathcal{L}^w_{+}(S,K)}  
		{\left(\bigcap_{v\in I^*_{-S}(T)}[\epi(F+I_C+T\circ G_u )^*+(0_{\mathcal{L}},v) ]\right)}\\
		&\supset \bigcup_{T\in \mathcal{L}_{+}(S,K)}\epi (F+I_C+T\circ G_u)^*.
	\end{align*}
Consequently,  
\begin{equation} \label{eqbb} 
\bigcup_{u\in \U}\epi (F+I_{A_u})^*\supset \B \supset \A. \end{equation} 
 On the other hand,
	as   $A=\bigcap_{u\in \U} A_u$,  according to  \eqref{eq111}, one has 
	\begin{eqnarray*} 
	(L, y) \in  \bigcup_{u\in \U}  \epi(F+I_{A_u})^\ast \ &\Longrightarrow \ &   \exists u \in \U:  	(L,y) \in  \epi(F+I_{A_u})^*   \nonumber \\
	 \ &\Longrightarrow \ &   \exists u \in \U:  y - L(x) + F(x)  \notin - \inte K,\; \forall x \in A_u \cap \dom F   \nonumber \\
	 	 \ &\Longrightarrow \ &    y - L(x) + F(x)  \notin - \inte K,\; \forall x \in A \cap \dom F    \nonumber \\	
	 \ &\Longrightarrow \ & (L,y) \in 	\epi(F+I_A)^\ast, 
	\end{eqnarray*} 
	which means that 
	\begin{equation} \label{eqcc} 
		\bigcup_{u\in \U}  \epi(F+I_{A_u})^\ast  \subset \epi(F+I_A)^\ast . \end{equation} 
		The conclusion now follows from \eqref{eqaa}, \eqref{eqbb}, and \eqref{eqcc}. 
	\end{proof}

\begin{lemma}
	\label{lem_scalar-representing}
	Assume that $f\in \Gamma(X)$ and  that $C$ is a nonempty closed convex subset of $X$. Assume further that   $G_u$ is proper,  $S$-convex and 
	$S$-epi closed for each $u\in \mathcal{U}$, and that $A\cap\dom f\ne\emptyset$ (where $A$ is given by \eqref{feas-A}). Then\footnote{ This would be an elementary result in the study of robust optimization problems. However, to the  surprise of the authors, we could not find it in the references we had in hand and so we insert a short proof here.} 
		\begin{align}
		\epi (f+i_A)^*&=\cl\co \left(\bigcup_{(z^*,u)\in S^+\times \mathcal{U}}
			\epi (f+i_C+z^*\circ G_u)^*\right).	\label{eq_13a}
	\end{align}
\end{lemma}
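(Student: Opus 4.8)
The plan is to prove the two inclusions separately, exploiting the fact that when $Y=\RR$ the $k$-sectional machinery collapses to ordinary convex analysis, so this is genuinely a scalar statement. For the inclusion ``$\supseteq$'', note first that for each $z^*\in S^+$ and $u\in\U$ we have $z^*\circ G_u$ convex and lsc on $\dom(z^*\circ G_u)\supseteq A$ (because $G_u$ is $S$-convex and $S$-epi closed, and $z^*\in S^+$), and moreover $z^*\circ G_u\le 0$ on $A_u=C\cap G_u^{-1}(-S)\supseteq A$. Hence $f+i_C+z^*\circ G_u\le f+i_A$ pointwise, which gives $\epi(f+i_C+z^*\circ G_u)^*\subseteq\epi(f+i_A)^*$. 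Since $\epi(f+i_A)^*$ is closed (as $f+i_A\in\Gamma(X)$, its conjugate has closed epigraph) and convex, it contains the closed convex hull of the union, which is exactly the right-hand side of \eqref{eq_13a}.

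For the nontrivial inclusion ``$\subseteq$'', I would argue by contradiction using a separation argument in $X^*\times\RR$. Denote by $M$ the closed convex set on the right-hand side of \eqref{eq_13a}. Suppose $(\bar x^*,\bar r)\in\epi(f+i_A)^*\setminus M$. By the Hahn--Banach separation theorem (in the lcHtvs $X^*\times\RR$ with the weak\nobreakdash-$^*$ topology on $X^*$, whose dual is $X\times\RR$) there exist $(\bar x,\bar t)\in X\times\RR$ and $\gamma\in\RR$ with
\[
\la\bar x^*,\bar x\ra+\bar r\,\bar t>\gamma\ge\la x^*,\bar x\ra+r\,\bar t\quad\text{for all }(x^*,r)\in M.
\]
Since $M$ is an epigraph-type set (it is closed under adding $(0,+\varepsilon)$), one must have $\bar t\le 0$; a short separate check rules out $\bar t=0$ using $A\cap\dom f\ne\emptyset$, so after rescaling we may take $\bar t=-1$, i.e. $\la\bar x^*,\bar x\ra-\bar r>\gamma\ge\la x^*,\bar x\ra-r$ for all $(x^*,r)\in M$. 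Taking the supremum over $(x^*,r)\in\epi(f+i_C+z^*\circ G_u)^*$ on the right, which equals $(f+i_C+z^*\circ G_u)^{**}(\bar x)=(f+i_C+z^*\circ G_u)(\bar x)$ (by Fenchel--Moreau, the functions being in $\Gamma(X)$ when proper), we get
\[
f(\bar x)+i_C(\bar x)+(z^*\circ G_u)(\bar x)\le\gamma\quad\text{for all }z^*\in S^+,\ u\in\U.
\]
In particular $\bar x\in C\cap\dom f$, and taking the supremum over $z^*\in S^+$ forces $\sup_{z^*\in S^+}(z^*\circ G_u)(\bar x)<+\infty$ for each $u$, which by the bipolar/Moreau--Rockafellar relation for the cone $S$ means $G_u(\bar x)\in-S$ for every $u\in\U$; hence $\bar x\in A$. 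Therefore $(f+i_A)(\bar x)=f(\bar x)\le\gamma$, so $\la\bar x^*,\bar x\ra-(f+i_A)(\bar x)\ge\la\bar x^*,\bar x\ra-\gamma>\bar r$, i.e. $(f+i_A)^*(\bar x^*)>\bar r$, contradicting $(\bar x^*,\bar r)\in\epi(f+i_A)^*$. This contradiction yields $\epi(f+i_A)^*\subseteq M$.

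The main obstacle I anticipate is the step $\sup_{z^*\in S^+}(z^*\circ G_u)(\bar x)<+\infty\implies G_u(\bar x)\in-S$: one needs $G_u(\bar x)\in Z$ (not $+\infty_Z$) and then the standard fact that for a closed convex cone $S$, $\sup_{z^*\in S^+}\la z^*,z\ra<+\infty$ iff $\la z^*,z\ra\le 0$ for all $z^*\in S^+$ iff $z\in-S^{++}=-S$. Ruling out $G_u(\bar x)=+\infty_Z$ follows because otherwise $(z^*\circ G_u)(\bar x)=+\infty$ for $z^*=0$ already contradicts the finite bound $\gamma$, or more carefully one uses that $\bar x\in\dom f\cap C$ together with $f(\bar x)+i_C(\bar x)+(z^*\circ G_u)(\bar x)\le\gamma$ with $z^*=0_{Z^*}$ giving $f(\bar x)\le\gamma$ and then consistency of the bound across all $z^*$. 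A secondary technical point is justifying the Fenchel--Moreau identity $(f+i_C+z^*\circ G_u)^{**}=f+i_C+z^*\circ G_u$, which requires this sum to be a proper lsc convex function; properness holds on $A\cap\dom f\ne\emptyset$ since there $z^*\circ G_u\le 0$, and lsc/convexity come from $f\in\Gamma(X)$, $C$ closed convex, and $z^*\circ G_u$ being convex lsc for $z^*\in S^+$.
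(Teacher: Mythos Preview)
Your argument takes a genuinely different route from the paper's. The paper does not separate at all; it decomposes via the sets $A_u:=C\cap G_u^{-1}(-S)$ and then cites two external results: first $\epi(f+i_A)^*=\cl\co\bigl[\bigcup_{u\in\U}\epi(f+i_{A_u})^*\bigr]$ (a sup--conjugate formula, \cite[Lemma~2.2]{GLi}) using that $f+i_A=\sup_{u}(f+i_{A_u})$ with each $f+i_{A_u}\in\Gamma(X)$, and second $\epi(f+i_{A_u})^*=\cl\bigl[\bigcup_{z^*\in S^+}\epi(f+i_C+z^*\circ G_u)^*\bigr]$ from \cite[Theorem~8.2]{Bot10}. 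Combining these two closures gives \eqref{eq_13a} in two lines. Your direct separation argument is more self-contained and illuminates \emph{why} the formula holds, whereas the paper's proof is shorter but leans on machinery whose hypotheses (especially for Bo\c{t}'s theorem under $S$-epi closedness) one must go and verify.

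Two technical points in your version deserve tightening. First, the sentence ``a short separate check rules out $\bar t=0$'' is not quite right: a Hahn--Banach separation of $(\bar x^*,\bar r)$ from $M$ \emph{can} be vertical (e.g.\ when $\bar x^*\notin\dom h$, $h$ the function with $\epi h=M$). The clean fix is not to rule $\bar t=0$ out but to produce a non-vertical separation directly: since $M$ is nonempty, closed, convex, stable under $+\{0\}\times\RR_+$, and contained in $\epi(f+i_A)^*$ (so it has no full vertical line), it equals $\epi h$ for some proper $w^*$-lsc convex $h$; then $(\bar x^*,\bar r)\notin M$ reads $\bar r<h(\bar x^*)=h^{**}(\bar x^*)=\sup_x[\la\bar x^*,x\ra-h^*(x)]$, which hands you $\bar x$ with $\la\bar x^*,\bar x\ra-\bar r>h^*(\bar x)\ge\la x^*,\bar x\ra-r$ for all $(x^*,r)\in M$, i.e.\ a separation with $\bar t=-1$ from the start. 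Second, your Fenchel--Moreau step $(f+i_C+z^*\circ G_u)^{**}=f+i_C+z^*\circ G_u$ needs the sum to be lsc; you assert $z^*\circ G_u$ is lsc, but the hypothesis here is only that $G_u$ is $S$-\emph{epi} closed, which in general is weaker than ``$z^*\circ G_u$ lsc for all $z^*\in S^+$''. One way to close this without that claim is to note that $S$-epi closedness does give $G_u^{-1}(-S)=\{x:(x,0_Z)\in\epi_S G_u\}$ closed, hence $f+i_{A_u}\in\Gamma(X)$; since $f+i_C+z^*\circ G_u\le f+i_{A_u}$ for every $z^*\in S^+$, one has $\epi(f+i_{A_u})^*\subset\cl\bigcup_{z^*}\epi(f+i_C+z^*\circ G_u)^*\subset M$, and then your separation with $\bar t=-1$ yields $(f+i_{A_u})(\bar x)=(f+i_{A_u})^{**}(\bar x)\le\gamma$, so $\bar x\in A_u$ for every $u$, which is exactly what you need.
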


\begin{proof}
	For each $u\in \U$, set 
	$A_u=C\cap G_u^{-1}(-S)$. Then 
 $A=\bigcap_{u\in\U}A_u$ and 
	$\sup_{u\in\U}(f+i_{A_u})(x)=(f+i_A)(x)$ for all $x\in X$.
	As  for each $u \in \U$, $G_u$ is proper,  $S$-convex and 
	$S$-epi closed  and  $A\cap\dom f\ne\emptyset$,  one has $f+i_{A_u}\in \Gamma (X)$ for all $u\in \U$. 
	Now, take $x_0\in A\cap \dom f$ (note that  $A\cap \dom f\ne\emptyset$), one gets 
	$\sup_{u\in\U}(f+i_{A_u})(x_0)=(f+i_A)(x_0) <+\infty$.  So, according to \cite[Lemma 2.2]{GLi}, it holds
	
	\begin{equation}  
		\label{prop41a}
		\epi (f+i_A)^*=\cl \co \left[ \bigcup_{u\in \U} \epi (f+i_{A_u})^*\right].
	\end{equation}
On the other hand,  for each $u\in \U$, it follows from \cite[Theorem 8.2]{Bot10} that 
	\begin{equation}  
		\label{prop41w}
		\epi (f+i_{A_u})^*=\cl \left[ \bigcup_{z^*\in S^+} \epi (f+i_C+z^*\circ G_u)^*\right].
	\end{equation}
	The equality \eqref{eq_13a} now follows from combining \eqref{prop41a} to \eqref{prop41w}.
\end{proof}

 We are now in a position to prove the main results of this section. Our purpose is to generalize the representation in Lemma \ref{lem_scalar-representing} to the vector {case}.    The difficulty  in such a generalization   is that   the set $\epi(F+I_A)^*$ in general is  not  convex  \cite[Example 2.6]{epi}, and hence, {it is almost no hope for a representation of the  same form  as in  \eqref{eq_13a}.}  { Fortunately, with the help of } Proposition \ref{epirco},    \eqref{eq_13a}   can be generalized {with  the use of the  $k$-sectionally convex hull,}   as  shown in the next theorem. 

 We  need   a hypothesis on the convexity of  data from (RVP) first.

\medskip 
\begin{tabular}{l|l}
$(\mathcal{H}_0)$  &$F$ is $K$-convex and positively $K$-lsc, $G_u\colon X\to Z$ is $S$-convex and \\ &$S$-epi-closed for all $u\in\mathcal{U}$, and $C$ is nonempty, closed and convex.
\end{tabular}

\begin{theorem}\label{thm_epi_presenting1}
Assume that  $(\mathcal{H}_0)$ holds. Then, for each $k\in\inte K$, one has 
$$ \epi (F+I_A)^\ast= \cl(\sco\nolimits_k{\A_k}).$$
\end{theorem}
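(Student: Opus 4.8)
The plan is to reduce the vector identity to the scalar one from Lemma~\ref{lem_scalar-representing} by composing with functionals $k\cdot z^*$ and scalarizing. Fix $k\in\inte K$. The inclusion $\cl(\sco_k \A_k)\subset \epi(F+I_A)^*$ is the easy half: by Proposition~\ref{epi_include} we have $\A_k\subset \epi(F+I_A)^*$, and by Proposition~\ref{epirco} (applied with the mappings $F+I_A$) together with Remark~\ref{rem_3.1_nw}, the set $\epi(F+I_A)^*$ is $k$-sectionally convex and $k$-sectionally closed; since $\sco_k$ and $\cl$ are the smallest such operations, $\cl(\sco_k\A_k)\subset \epi(F+I_A)^*$ follows.

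The reverse inclusion is the substantial part. First I would use Proposition~\ref{pro_form_sco} and Proposition~\ref{pro_3.2zab} to write
$$\cl(\sco\nolimits_k \A_k)=\bigcup_{(L,y)\in E}\cl\co\bigl(\A_k\cap(E_k+(L,y))\bigr),$$
so it suffices to show that each $(\bar L,\bar y)\in\epi(F+I_A)^*$ lies in $\cl\co(\A_k\cap(E_k+(\bar L,\bar y)))$. The idea is that intersecting with the affine slice $E_k+(\bar L,\bar y)=\{(\bar L+k\cdot x^*,\ \bar y+rk):x^*\in X^*,r\in\R\}$ collapses everything onto a one-dimensional (scalar) picture: a point $(\bar L+k\cdot x^*,\bar y+rk)$ belongs to $\epi(F+I_A)^*$ iff, by \eqref{eq111} and Lemma~\ref{lem_2.2zba} (which converts the $-\inte K$ condition into a scalar inequality via the threshold $\bar\alpha$), a corresponding scalar conjugate inequality holds. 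Concretely I would fix a functional $k^*\in K^+$ with $\langle k^*,k\rangle=1$ and set $f:=k^*\circ(F+I_C)$ — note $f\in\Gamma(X)$ because $F$ is $K$-convex and positively $K$-lsc and $C$ is closed convex — and likewise observe $k^*\circ((k\cdot z^*)\circ G_u)=z^*\circ G_u$. Then the slice $\A_k\cap(E_k+(\bar L,\bar y))$ is carried bijectively (via $(x^*,r)\mapsto(\bar x^*-x^*, \ \text{scalar height})$ for an appropriate base point $\bar x^*$ determined by $\bar L$) onto $\epi(f+i_C+z^*\circ G_u)^*$ intersected with a half-line, and similarly $\epi(F+I_A)^*\cap(E_k+(\bar L,\bar y))$ corresponds to $\epi(f+i_A)^*$ restricted to that line. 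Applying Lemma~\ref{lem_scalar-representing} to $f$, $C$, $(G_u)_{u\in\U}$ gives $\epi(f+i_A)^*=\cl\co\bigcup_{(z^*,u)}\epi(f+i_C+z^*\circ G_u)^*$, and pulling this identity back through the affine identification yields $(\bar L,\bar y)\in\cl\co(\A_k\cap(E_k+(\bar L,\bar y)))$, as required.

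\textbf{Main obstacle.} The delicate point is making the identification between the vector slice and the scalar epigraph precise and checking it is compatible with closures and convex hulls. Two things need care: (a) showing that $(\bar L,\bar y)\in\epi(F+I_A)^*$ forces $\bar L$ to have the form $\bar L=L_0+k\cdot\bar x^*$ for a suitable fixed $L_0$ — i.e.\ that only the $X^*$-component "along $k$" matters on this slice, which is exactly what \eqref{Ek} builds in, but one must verify the base point $L_0$ can be chosen so that the whole slice $E_k+(\bar L,\bar y)$ actually meets $\epi(F+I_A)^*$ in a nonempty set (using $A\cap\dom F\ne\emptyset$); and (b) verifying that the scalar quantity attached to a slice point via Lemma~\ref{lem_2.2zba} (the threshold $\bar\alpha$) is an affine function of $(x^*,r)$ so that the bijection is affine and hence transports $\cl\co$ to $\cl\co$. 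Once the affine dictionary between $\{(x^*,r):(\bar L+k\cdot x^*,\bar y+rk)\in\epi F^*\}$ and $\epi f^*$ is set up cleanly, the rest is a direct transcription of Lemma~\ref{lem_scalar-representing}; I would expect to spend most of the write-up on this bookkeeping and on the edge cases where a slice is empty (in which case both sides are trivially equal on that slice).
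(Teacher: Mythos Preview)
Your easy inclusion is fine and matches the paper. The hard inclusion, however, has a genuine gap: the scalarization by a \emph{fixed} linear functional $k^*\in K^+$ with $\langle k^*,k\rangle=1$ does not yield the bijection you claim between the slice $E_k+(\bar L,\bar y)$ and a scalar conjugate epigraph. Concretely, for $v\in Y$ the condition $v\notin-\inte K$ is \emph{not} equivalent to $k^*(v)\ge 0$ for one $k^*$; take $Y=\RR^2$, $K=\RR_+^2$, $k=(1,1)$, $k^*=(1,0)$, $v=(0,10)$: then $v\notin-\inte K$ but $k^*(v)=0$ gives no information about the threshold from Lemma~\ref{lem_2.2zba}, which here equals $-10$. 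So the map $(x^*,r)\mapsto(\bar L+k\cdot x^*,\bar y+rk)$ sends $\epi(k^*\!\circ F+i_A)^*$ \emph{into} the slice of $\epi(F+I_A)^*$, but not onto it; in particular you cannot conclude that $(\bar L,\bar y)$ (corresponding to $(x^*,r)=(0,0)$) lies in the image of $\epi(k^*\!\circ F+i_A)^*$, which is exactly what you need. Your ``obstacle (a)'' is a red herring (the point $(\bar L,\bar y)$ trivially lies on its own slice); the real obstacle is that no single linear $k^*$ captures the $-\inte K$ test.

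The paper fixes this by choosing the scalarizing functional \emph{after} the point: given $(\bar L,\bar y)\in\epi(F+I_A)^*$, it separates $\bar y$ from the convex set $(\bar L-F)(A\cap\dom F)-\inte K$ to obtain $y^*\in K^+\setminus\{0\}$ with $y^*(\bar y-\bar L(x)+F(x))\ge 0$ for all $x\in A\cap\dom F$, i.e.\ $(y^*\!\circ\bar L,\,y^*(\bar y))\in\epi(y^*\!\circ F+i_A)^*$ by construction. Lemma~\ref{lem_scalar-representing} is then applied to $f=y^*\!\circ F$, and the scalar approximants are lifted back onto the slice $E_k+(\bar L,\bar y)$ using $y^*(\bar k)>0$. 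Your idea of invoking Lemma~\ref{lem_2.2zba} would actually work if you replaced the linear $k^*$ by the \emph{nonlinear} Gerstewitz--Tammer functional $\phi(y)=\inf\{t:y\in tk-K\}=\sup\{y^*(y):y^*\in K^+,\,y^*(k)=1\}$ and set $h(x):=\phi(\bar y-\bar L(x)+F(x))$; then $h\in\Gamma(X)$ under $(\mathcal{H}_0)$ and the slice--to--scalar dictionary becomes exact. But as written, with a fixed linear $k^*$, the argument does not go through.
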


\begin{proof}
 Take $\bar k\in \inte K$. By  Proposition \ref{epi_include}, 
 $ \epi(F+I_A)^*\supset \A_{\bar k}$.  Moreover,   by  \cite[Lemma 3.6]{epi} and   Proposition \ref{epirco},  the set  $ \epi(F+I_A)^*$ is  closed and  $\bar k$-sectionally convex, respectively, and so,        
 \begin{equation}\label{eqepiaa} 
 \epi(F+I_A)^\ast  \supset \cl (\sco_{\bar k} {\A_{\bar k}}). 
 \end{equation}
  We now prove  the converse of the  inclusion \eqref{eqepiaa}.   For this, take  $(\bar L,\bar y)\in\epi (F+I_A)^*$ and we will prove that $(\bar L,\bar y)\in \cl(\sco_{\bar k} {\A_{\bar k}}).$    Let us structure the rest of our proof in five  steps.   
  
 $\bullet$ \emph{Step 1.}  {\it Prove that the set $(\bar L-F)(A\cap\dom F)- \inte K$ is convex.} Observe firstly that   as  $G_u $ is $S$-convex for all $u\in \mathcal{U}$,  and $C$ is convex,  the feasible set     $A$ is convex. 
Also, $F-\bar L$ is a $K$-convex mapping. Thus,    $(F-\bar L)(A\cap\dom F)+ \inte K$ is convex (see \cite[Remark 4.1]{epi}), and so is    $(\bar L-F)(A\cap\dom F)- \inte K$.   

$\bullet$ \emph{Step 2.}
As   $(\bar L,\bar y)\in \epi(F+I_A)^*$, it follows from characterizing \eqref{eq111} that 
 \begin{equation}
	\label{eqa} 
	\bar y\notin (\bar L-F)(A\cap\dom F)-\inte K. 
\end{equation} 
So, applying the convex separation theorem  \cite[Lemma 3.4]{Rudin91}, there  {is}  $y^*\in Y^*\setminus\{0_{Y^*}\}$ such that
\begin{equation}\label{1}
y^*(w)< y^*(\bar y),\quad \forall w\in (\bar L-F)(A\cap\dom F)- \inte K.
\end{equation}
It then follows from \cite[Lemma 3.3]{epi}  that  
\begin{gather}
y^*\circ (\bar L-F)(x)\le y^*(\bar y),\; \forall x\in A\cap \dom F,\label{1.1}\\
y^*\in K^+\; \textrm{ and }\; y^*(k^\prime )>0,\; \forall k^\prime \in\inte K.\label{1.2}
\end{gather}

 $\bullet$ \emph{Step 3.} It is easy to see that
 \eqref{1.1} is equivalent to $y^*(\bar y)\ge (y^*\circ F+i_A)^*(y^*\circ \bar L)$, or equivalently, $(y^*\circ \bar L, y^*(\bar y))\in \epi\nolimits  (y^*\circ  F+i_A)^*$.
On the other hand, as  $y^*\in K^+\setminus\{0_{Y^*}\}$ and $F$ is $K$-convex and positively $K$-lsc, one has {$y^*\circ F\in \Gamma (X)$} and now Lemma \ref{lem_scalar-representing},  applying to $f=y^*\circ F$, gives us 
\begin{equation}
\label{eqthm41}
  \epi (y^*\circ F+i_A)^*=\cl\co  \tilde \A , 
\end{equation}
where $\tilde \A :=  \bigcup_{(z^*,u)\in S^+\times \mathcal{U}}\epi (y^*\circ F+i_C+z^*\circ G_u)^*.$  Since  $(y^*\circ \bar L, y^*(\bar y))\in \epi\nolimits  (y^*\circ  ~F~+~i_A)^*$, it follows from \eqref{eqthm41} that 
   there exist a net $(x^*_\alpha,r_\alpha)_{\alpha\in D}\subset \co\tilde \A$ \footnote{{For the sake of simplicity,  we write $(x^*_\alpha,r_\alpha)_{\alpha\in D}$ {for } $((x^*_\alpha, r_\alpha))_{\alpha\in D}$.}} such that $(x^*_\alpha,r_\alpha)\to (y^*\circ \bar L, y^*(\bar y))$.  So, for each  $\alpha\in D$,  there are a finite index set $I_\alpha$, and finite sequences $(z^*_{\alpha_i})_{i\in I_\alpha} \subset S^+$, $(u_{\alpha_i})_{i\in I_\alpha}\subset \mathcal{U}$,
 $(x^*_{\alpha_i})_{i\in I_\alpha}\subset X^*$, $(r_{\alpha_i})_{i\in I_\alpha}\subset \mathbb{R}$ and
$(\lambda_{\alpha_i})_{i\in I_\alpha}\subset \mathbb{R}_+\setminus\{0\}$ such that
$\sum_{i\in I_\alpha}\lambda_{\alpha_i}=1$, $\sum_{i\in I_\alpha} \lambda_{\alpha_i}(x^*_{\alpha_i}, r_{\alpha_i})=(x^*_\alpha,r_\alpha)$, and 
\begin{gather}
(x^*_{\alpha_i},r_{\alpha_i})\in \epi\nolimits(y^*\circ F+i_C + z^*_{\alpha_i}\circ G_{u_{\alpha_i}}),\; \forall i\in I_\alpha.  \label{e10}
\end{gather}

	 $\bullet$  \emph{Step 4.} As $\bar k\in \inte K$, it follows from   \eqref{1.2} that  $y^*(\bar k)>0$.  For each $\alpha\in D$ and $i\in I_\alpha$, let us  define the elements $y_{\alpha_i}\in Y$,  $\tilde z^*_{\alpha_i} \in Z^\ast$,   and 
	the mapping $L_{\alpha_i}\colon X\rightarrow Y$, respectively by  
	\begin{equation}
		\label{defmaps}
		y_{\alpha_i}:=\bar y+\frac{r_{\alpha_i}\!\!-\!y^*(\bar y)}{y^*(\bar k)}\bar k,\
	\tilde z^*_{\alpha_i}(z):= \frac{z^*_{\alpha_i}(z)}{y^*(\bar k)}, \ 	  L_{\alpha_i}(x):=\bar L(x)+\frac{x_{\alpha_i}^*(x)\!-\!y^*\circ \bar L(x)}{y^*(\bar k)} \bar k.	\end{equation}
	Then, it is easy to check that 
	\begin{equation} \label{thm41eqaa}
	 L_{\alpha_i}\in \mathcal{L}(X,Y),\;  \tilde z^*_{\alpha_i}\in S^+,\;
	y^*(y_{\alpha_i})=r_{\alpha_i},\; y^*\circ L_{\alpha_i}=x^*_{\alpha_i},\; 
	y^*\circ (\bar k\cdot \tilde z^*_{\alpha_i})= z^*_{\alpha_i}, 
	\end{equation} 
	and  
	\begin{equation}
		\label{limits}
		\Big(\sum_{i\in I_\alpha}\lambda_{\alpha_i}L_{\alpha_i},\sum_{i\in I_\alpha}
			\lambda_{\alpha_i} y_{\alpha_i}\Big) \longrightarrow {(\bar L,\bar y)}.
	\end{equation}

	We now show that  for  each $\alpha\in D$ and $i\in I_\alpha$,  $(L_{\alpha_i},y_{\alpha_i})\in \A_{\bar k}$.   It follows from \eqref{thm41eqaa} and  \eqref {e10} that
	$$ y^*(y_{\alpha_i})\ge (y^*\circ F+i_C + y^*_{\alpha_i}\circ   { (\bar k\cdot \tilde z^*_{\alpha_i})}
		\circ G_{u_{\alpha_i}})^*(y^*\circ L_{\alpha_i}),$$
	which is equivalent to
	\begin{equation*}
		y^*(y_{\alpha_i})\ge y^*\circ L_{\alpha_i}(x)-y^*\circ F(x) -y^*\circ  { (\bar k\cdot \tilde z^*_{\alpha_i})}
			\circ G_{u_{\alpha_i}}(x),\quad \forall x\in C\cap \dom F,
	\end{equation*}
	or equivalently,
	\begin{equation*}
		y^*\left[L_{\alpha_i}(x)- F(x)- { (\bar k\cdot \tilde z^*_{\alpha_i})}\circ G_{u_{\alpha_i}}(x) -y_{\alpha_i} \right]\le 0, \quad \forall x\in C\cap \dom F.
	\end{equation*}
	The last inequality, together with \eqref{1.2}, yields
	$$y_{\alpha_i}\notin L_{\alpha_i}(x)-F(x)- { (\bar k\cdot \tilde z^*_{\alpha_i})}\circ G_{u_{\alpha_i}}(x) - \inte K,
		\quad \forall x\in C\cap \dom F,$$
	which in turn   yields {(see \eqref{eq111})} 
	\begin{equation} 
		\label{eqnew}
		(L_{\alpha_i},y_{\alpha_i})\in {\epi} (F+I_{C} +  { (\bar k\cdot \tilde z^*_{\alpha_i})}\circ G_{u_{\alpha_i}})^*\subset {\A_{\bar k}}. 
	\end{equation}

	$\bullet$ \emph{Step 5.} Let $E_{\bar k}=\bar k\cdot (X^*\times \RR)$. 
	According to Proposition \ref{pro_form_sco}, one has 
	{$$\sco_{\bar k} \A_{\bar k}:=\bigcup_{(L,y)\in\L(X,Y)\times Y} \co [\A_{\bar k}\cap (E_{\bar k}+(L,y))].$$} 
	For all $\alpha\in D$,  it follows from \eqref{eqnew} and \eqref{defmaps} that
	$$(L_{\alpha_i},y_{\alpha_i})\in {\A_{\bar k}}\cap (E_{\bar k}+(\bar L,\bar y)),\quad \forall i\in I_\alpha,$$
	and hence, if take $L_\alpha:= \sum_{i\in I_\alpha}\lambda_{\alpha_i}L_{\alpha_i}$ and 
	$y_\alpha:=\sum_{i\in I_\alpha}\lambda_{\alpha_i} y_{\alpha_i}$ then it holds 
 	$(L_\alpha,y_\alpha)\in \co [{\A_{\bar k}}\cap (E_{\bar k}+(\bar L,\bar y))]\subset \sco_{\bar k} {\A_{\bar k}}$. 
	From \eqref{limits}, $(\bar L,\bar y)  = \lim_{\alpha \in D}  (L_\alpha,y_\alpha)$, showing that 
	 $(\bar L,\bar y )\in \cl(\sco_{\bar k} {\A_{\bar k}})$ and we are done.  
\end{proof}


\begin{corollary}
\label{rem_4.1zba}
Assume that  $(\mathcal{H}_0)$ holds. Then one has  
\begin{equation} \label{rem410}
\epi(F+I_A)^*=\scl_{k}(\sco_{k} \A_k),\; \forall k\in \inte K.
\end{equation}
 \end{corollary}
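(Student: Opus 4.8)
The plan is to sandwich the set $\scl_{k}(\sco_{k}\A_k)$ between two sets that Theorem~\ref{thm_epi_presenting1} and its proof both identify with $\epi(F+I_A)^*$, so that the corollary comes out essentially for free once Theorem~\ref{thm_epi_presenting1} is available. Throughout write $E=\L(X,Y)\times Y$ and $E_k=k\cdot(X^*\times\RR)$.

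First, for the inclusion ``$\subseteq$'' I would argue as follows. The set $\cl(\sco_{k}\A_k)$ is closed, hence $k$-sectionally closed (cf. Remark~\ref{rem_3.1_nw}), and it contains $\sco_{k}\A_k$. Since $\scl_{k}(\sco_{k}\A_k)$ is by definition the intersection of all $k$-sectionally closed subsets of $E$ containing $\sco_{k}\A_k$, we get $\scl_{k}(\sco_{k}\A_k)\subseteq\cl(\sco_{k}\A_k)$. By Theorem~\ref{thm_epi_presenting1} the right-hand side equals $\epi(F+I_A)^*$, so $\scl_{k}(\sco_{k}\A_k)\subseteq\epi(F+I_A)^*$.

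For the reverse inclusion I would go back into the construction carried out in the proof of Theorem~\ref{thm_epi_presenting1}. By Proposition~\ref{pro_3.2zab}$(iii)$,
\[
\scl_{k}(\sco_{k}\A_k)=\sclco_{k}\A_k=\bigcup_{v\in E}\cl\co\big(\A_k\cap(E_k+v)\big).
\]
Now fix $(\bar L,\bar y)\in\epi(F+I_A)^*$ and apply Steps~2--5 of the proof of Theorem~\ref{thm_epi_presenting1} with $\bar k:=k$: they produce a net $(L_\alpha,y_\alpha)_{\alpha\in D}$ with $(L_\alpha,y_\alpha)\in\co\big(\A_k\cap(E_k+(\bar L,\bar y))\big)$ for every $\alpha\in D$ and $(L_\alpha,y_\alpha)\to(\bar L,\bar y)$. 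Hence $(\bar L,\bar y)\in\cl\co\big(\A_k\cap(E_k+(\bar L,\bar y))\big)\subseteq\scl_{k}(\sco_{k}\A_k)$. As $(\bar L,\bar y)$ was arbitrary, $\epi(F+I_A)^*\subseteq\scl_{k}(\sco_{k}\A_k)$, which together with the previous paragraph yields \eqref{rem410} for every $k\in\inte K$.

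I do not expect a serious obstacle: the corollary is really a byproduct of the proof of Theorem~\ref{thm_epi_presenting1}. The only point worth emphasizing --- and the reason one cannot simply rewrite \eqref{rem410} off the \emph{statement} of Theorem~\ref{thm_epi_presenting1} --- is that in general $\scl_{k}N$ is strictly smaller than $\cl N$, so the upgrade from $\cl(\sco_{k}\A_k)$ to $\scl_{k}(\sco_{k}\A_k)$ genuinely uses the fact, visible only inside the construction, that every point of $\epi(F+I_A)^*$ is already a limit of a net lying in one single section $E_k+(\bar L,\bar y)$.
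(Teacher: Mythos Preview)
Your proposal is correct and follows essentially the same approach as the paper's own proof: both obtain ``$\subseteq$'' from the general fact that closed sets are $k$-sectionally closed together with Theorem~\ref{thm_epi_presenting1}, and both obtain ``$\supseteq$'' by invoking Proposition~\ref{pro_3.2zab}$(iii)$ and then re-using the net $(L_\alpha,y_\alpha)$ constructed inside the proof of Theorem~\ref{thm_epi_presenting1}, which lies in a single section $E_k+(\bar L,\bar y)$. Your closing remark about why the \emph{statement} of Theorem~\ref{thm_epi_presenting1} alone does not suffice is exactly the point.
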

 \begin{proof}   
Indeed, take $\bar k\in \inte K$, according to Theorem \ref{thm_epi_presenting1}, 
\begin{equation}\label{rem41a} 
 \epi(F+I_A)^*=\cl(\sco_{\bar k}\A)\supset \scl_{\bar k} (\sco_{\bar k} \A)
 \end{equation}
(note that all closed subsets are $k$-sectionally closed). 

On the other hand, according to Proposition \ref{pro_3.2zab}$(iii)$, one has 
	$$\scl_{\bar k}(\sco_{\bar k} \A_{\bar k})=\bigcup_{(L,y)\in\L(X,Y)\times Y} \cl\co [\A_{\bar k}\cap (E_{\bar k}+(L,y))].$$
For each $(\bar L,\bar y)$, we can see from the proof of Theorem \ref{thm_epi_presenting1} that $(\bar L,\bar y)$  is the limits of the net $(L_\alpha,y_\alpha)_{\alpha\in D}$ with $(L_\alpha,y_\alpha)\in \co [\A_{\bar k}\cap (E_{\bar k}+(\bar L,\bar y))] $ for all $\alpha\in D$. This yields $(\bar L,\bar y)\subset \cl\co[\A_{\bar k}\cap (E_{\bar k}+(\bar L,\bar y))]\subset \scl_{\bar k}(\sco_{\bar k} \A_{\bar k})$. So, 
\begin{equation} \label{rem41b} 
\epi(F+I_A)^*\subset \scl_{\bar k}(\sco_{\bar k} \A_{\bar k}), 
\end{equation} 
and hence \eqref{rem410} follows from \eqref{rem41a} and \eqref{rem41b}. 
\end{proof}

\begin{theorem}\label{thm_epi-presenting2}
Assume that $(\mathcal{H}_0)$ holds. 
Then, for all $k\in\inte K$, 
\begin{equation}\label{eq_26bis}
 \epi (F+I_A)^*= \cl(\sco\nolimits_k\B)={\cl(\sco\nolimits_k\A)}.
\end{equation}
\end{theorem}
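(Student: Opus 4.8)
The plan is to deduce Theorem~\ref{thm_epi-presenting2} from Theorem~\ref{thm_epi_presenting1} together with the inclusion chain in Proposition~\ref{epi_include} and the basic stability properties of the $k$-sectionally convex hull. Fix $k\in\inte K$. By Proposition~\ref{epi_include} we already have $\A_k\subset\A\subset\B\subset\epi(F+I_A)^*$. Applying the operator $N\mapsto\cl(\sco_k N)$, which is monotone with respect to inclusion, gives
\begin{equation*}
\cl(\sco\nolimits_k\A_k)\subset\cl(\sco\nolimits_k\A)\subset\cl(\sco\nolimits_k\B)\subset\cl\big(\sco\nolimits_k(\epi(F+I_A)^*)\big).
\end{equation*}
The first term equals $\epi(F+I_A)^*$ by Theorem~\ref{thm_epi_presenting1}. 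For the last term, recall that $\epi(F+I_A)^*$ is closed (\cite[Lemma 3.6]{epi}) and $k$-sectionally convex (Proposition~\ref{epirco}); hence $\sco_k(\epi(F+I_A)^*)=\epi(F+I_A)^*$ and its closure is itself. So the last term is also $\epi(F+I_A)^*$, and the whole chain collapses to equalities, yielding \eqref{eq_26bis}.

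The only point requiring a word of care is that these arguments use Proposition~\ref{epirco} applied to the mapping $F+I_A$: one must check that $F+I_A$ is proper, which is exactly the standing assumption $A\cap\dom F\neq\emptyset$ (ensuring $\dom(F+I_A)=A\cap\dom F\neq\emptyset$, and $-\infty_Y\notin (F+I_A)(X)$ since $F$ is proper). Then Proposition~\ref{epirco} gives $k$-sectional convexity of $\epi(F+I_A)^*$, and \cite[Lemma 3.6]{epi} (see also Remark~\ref{rem_3.1_nw}) gives closedness. Everything else is the formal monotonicity of $\sco_k$ and $\cl$, which follows immediately from the definition of $\sco_k$ as the smallest $k$-sectionally convex set containing the argument (or from the explicit formula in Proposition~\ref{pro_form_sco}), together with the fact that $\cl$ preserves $k$-sectional convexity (noted right after the definition of sectional convexity).

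I do not expect a genuine obstacle here; the theorem is essentially a packaging of Theorem~\ref{thm_epi_presenting1} exploiting the ``sandwich'' $\A_k\subset\A\subset\B\subset\epi(F+I_A)^*$. If one wants to avoid invoking Theorem~\ref{thm_epi_presenting1} as a black box, an alternative is to rerun its proof verbatim: the only place $\A_k$ entered was in Step~4--5, where each constructed point $(L_{\alpha_i},y_{\alpha_i})$ was shown to lie in $\epi(F+I_C+(\bar k\cdot\tilde z^*_{\alpha_i})\circ G_{u_{\alpha_i}})^*\subset\A_{\bar k}\subset\A\subset\B$, so the same net witnesses $(\bar L,\bar y)\in\cl(\sco_k\A)\cap\cl(\sco_k\B)$; combined with the reverse inclusions from Proposition~\ref{epi_include} this gives \eqref{eq_26bis} directly. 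The short deduction via monotonicity is cleaner, so that is the route I would write up.
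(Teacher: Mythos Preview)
Your proof is correct and follows essentially the same route as the paper: both use the chain $\A_k\subset\A\subset\B\subset\epi(F+I_A)^*$ from Proposition~\ref{epi_include}, the closedness and $k$-sectional convexity of $\epi(F+I_A)^*$, and Theorem~\ref{thm_epi_presenting1} to collapse the sandwich to equalities. Your explicit check that $F+I_A$ is proper (needed for Proposition~\ref{epirco} and \cite[Lemma 3.6]{epi}) is a detail the paper leaves implicit.
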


\begin{proof}
Take  $\bar k\in \inte K$.  It follows from Proposition \ref{epi_include} that
$$\epi (F+I_A)^*\supset \B\supset\\A \supset \A_{\bar k} $$  
and as  $\epi (F+I_A)^*$ is $\bar k$-sectionally convex   (by  Proposition \ref{epirco})  {and closed}
(see  \cite[Lemma 3.6]{epi}), one gets 
$$\epi (F+I_A)^*\supset   \cl(\sco\nolimits_k\B)     \supset\   \cl(\sco\nolimits_k\A)  \supset  \cl(\sco\nolimits_k   \A_{\bar k}) .$$
On the other hand, under the assumption $(\mathcal{H}_0)$,  Theorem \ref{thm_epi_presenting1}  gives that   $\epi(F+I_A)^*=\cl(\sco_{\bar k} \A_{\bar k})$,  and hence, \eqref{eq_26bis} follows.
\end{proof}

In the case with the absence of  the  uncertainty, i.e., the  uncertainty set $\U$ is a singleton, Theorems \ref{thm_epi_presenting1} -  \ref{thm_epi-presenting2} collapse to the ones that 
cover  both  Theorems 4.1 and 4.2 in   \cite{epi}.

\begin{corollary}
Assume that $F\colon X\to Y^\bullet$ is a proper $K$-convex and positively $K$-lsc mapping, that $G\colon X\to Z$ is a proper $S$-convex and $S$-epi closed mapping, and that $C$ is nonempty, closed and convex. Assume further that $B\cap \dom F\ne \emptyset$ where $B:=C\cap G^{-1}(-S)$.
Then 
\begin{align*}
\epi (F+I_B)^*
&=\cl \left(\bigcup_{T\in \mathcal{L}_{+}(S,K) }\epi (F+I_C+T\circ G)^*\right)\notag\\
&=\cl\left(\bigcup_{T\in \mathcal{L}^w_{+}(S,K)}\bigcap_{v\in I^*_{-S}(T)} [\epi(F+I_C+T\circ G )^*+(0_{\mathcal{L}},v) ]\right) \label{eq_30zzzzz} \\
&= \cl \left( \bigcup_{z^*\in S^+ }\epi (F+I_C+(\bar k\cdot z^*)\circ G)^* \right). \notag  
\end{align*}
\end{corollary}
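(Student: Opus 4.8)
The plan is to derive this corollary directly from Theorems~\ref{thm_epi_presenting1} and~\ref{thm_epi-presenting2} by specializing to the case where the uncertainty set $\U$ is a singleton, say $\U = \{\bar u\}$, and observing that in that case the various sectional-convex-hull operators reduce to ordinary closures. First I would note that when $\U$ is a singleton, the feasible set $A$ in \eqref{feas-A} becomes exactly $B = C\cap G^{-1}(-S)$, the hypothesis $A\cap\dom F\ne\emptyset$ becomes $B\cap\dom F\ne\emptyset$, and the standing assumption $(\mathcal{H}_0)$ is precisely the hypothesis list of the corollary. So Theorems~\ref{thm_epi_presenting1} and~\ref{thm_epi-presenting2} apply and yield, for each $k\in\inte K$,
\[
\epi(F+I_B)^* \;=\; \cl(\sco\nolimits_k \A_k) \;=\; \cl(\sco\nolimits_k \A) \;=\; \cl(\sco\nolimits_k \B),
\]
where now $\A_k = \bigcup_{z^*\in S^+}\epi(F+I_C+(k\cdot z^*)\circ G)^*$, $\A = \bigcup_{T\in\mathcal{L}_+(S,K)}\epi(F+I_C+T\circ G)^*$, and $\B = \bigcup_{T\in\mathcal{L}^w_+(S,K)}\bigcap_{v\in I^*_{-S}(T)}[\epi(F+I_C+T\circ G)^*+(0_{\mathcal{L}},v)]$, since the union over $u\in\U$ collapses.

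The remaining point is to remove the $\sco_k$ operator from each of the three expressions, i.e.\ to show that each of $\A_k$, $\A$, $\B$ is already $k$-sectionally convex, so that its $k$-sectionally convex hull coincides with itself and $\cl(\sco_k\,\cdot) = \cl(\cdot)$. For $\A_k$ this is exactly Proposition~\ref{epirco2} applied with the single mapping $G$: the set $\bigcup_{z^*\in S^+}\epi(F+(k\cdot z^*)\circ G)^*$ is $k$-sectionally convex, and the same argument works verbatim with $F$ replaced by $F+I_C$ (note $I_C$ is proper since $C$ is nonempty, and $C$ convex keeps $F+I_C$ $K$-convex). For $\A$ and $\B$, I would invoke Proposition~\ref{epi_include}, which in the singleton case gives $\epi(F+I_B)^* \supset \B\supset\A\supset\A_k$; combining this with Theorem~\ref{thm_epi_presenting1} (which says $\epi(F+I_B)^* = \cl(\sco_k\A_k)\subset\cl\A_k \subset \cl\A \subset \cl\B \subset \epi(F+I_B)^*$, using monotonicity of $\cl$ and the fact that $\A_k \subset \A \subset \B$ — wait, this needs $\cl\B \subset \epi(F+I_B)^*$, which holds because $\epi(F+I_B)^*$ is closed by \cite[Lemma 3.6]{epi} and contains $\B$) forces $\cl\A_k = \cl\A = \cl\B = \epi(F+I_B)^*$. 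This is in fact the cleanest route: one does not even need to separately verify $k$-sectional convexity of $\A$ and $\B$, only of $\A_k$ (via Proposition~\ref{epirco2}) to close the chain of inclusions through Theorem~\ref{thm_epi_presenting1}.

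So the skeleton is: (1) reduce $(\mathcal{H}_0)$ and $A$ to the corollary's data via $|\U|=1$; (2) write the inclusion chain $\A_k \subset \A \subset \B \subset \epi(F+I_B)^*$ from Proposition~\ref{epi_include}; (3) apply $\cl$ throughout and use $\epi(F+I_B)^* = \cl(\sco_k\A_k) \subset \cl(\sco_k\A_k \text{'s convex hull}) = \cl\A_k$ — more precisely, Proposition~\ref{epirco2} gives $\sco_k\A_k = \A_k$, so Theorem~\ref{thm_epi_presenting1} reads $\epi(F+I_B)^* = \cl\A_k$; (4) sandwich: $\cl\A_k \subset \cl\A \subset \cl\B \subset \epi(F+I_B)^* = \cl\A_k$, hence all are equal. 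The main (mild) obstacle is purely bookkeeping: making sure that the singleton-uncertainty specialization is legitimate — in particular that Lemma~\ref{lem_scalar-representing} and Proposition~\ref{epirco2} really do apply with $F+I_C$ in place of $F$ and that properness/convexity are preserved — but no new idea is required beyond what Theorems~\ref{thm_epi_presenting1}--\ref{thm_epi-presenting2} and Propositions~\ref{epi_include} and~\ref{epirco2} already supply.
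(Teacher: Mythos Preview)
Your proposal is correct and follows essentially the same route as the paper's proof: specialize to a singleton uncertainty set, use Proposition~\ref{epi_include} for the inclusion chain $\A_{\bar k}\subset\A\subset\B\subset\epi(F+I_B)^*$, take closures (using that $\epi(F+I_B)^*$ is closed), then invoke Proposition~\ref{epirco2} to get $\sco_{\bar k}\A_{\bar k}=\A_{\bar k}$ so that Theorem~\ref{thm_epi_presenting1} yields $\epi(F+I_B)^*=\cl\A_{\bar k}$ and the sandwich closes. The only difference is cosmetic: the paper goes straight to the sandwich argument without the preliminary detour through Theorem~\ref{thm_epi-presenting2} or the question of whether $\A$ and $\B$ are themselves $k$-sectionally convex, which you correctly recognize is unnecessary.
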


\begin{proof} Let $\bar k \in \inte K$. 
In the case where the uncertainty set $\U$ is a singleton, the qualifying sets  $\A, \A_{\bar k}, \B $ become the following sets, respectively 
\begin{align*}
\mathcal{\tilde \A}&=\bigcup_{T\in \mathcal{L}_{+}(S,K) }\epi (F+I_C+T\circ G)^*,\\
{\mathcal{\tilde \A}_{\bar k}}&   ={\bigcup_{z^*\in S^+ }\epi (F+I_C+(\bar k\cdot z^*)\circ G)^*,}\\
\mathcal{\tilde \B}&=\bigcup_{T\in \mathcal{L}^w_{+}(S,K)}\bigcap_{v\in I^*_{-S}(T)} [\epi(F+I_C+T\circ G )^*+(0_{\mathcal{L}},v) ].
\end{align*}
In such a case ($\U$ is a singleton)   Proposition \ref{epirco}  gives ${\tilde \A }_{\bar k}\subset {\tilde \A }\subset {\tilde \B}\subset\epi (F+I_B)^*$, which, together with the fact that 
 $\epi (F+I_B)^\ast$ is closed (see \cite[Lemma 3.6]{epi}), leads to  
\begin{equation}\label{eq_4.20az}
\cl {\tilde \A}_{\bar k}\subset\cl {\tilde \A}\subset \cl {\tilde \B}\subset \epi (F+I_B)^*.
\end{equation}
On the other hand,  according to Proposition \ref{epirco2}, the set ${\tilde\A}_{\bar k}$ is $\bar k$-sectionally convex, and hence,  $\sco_{\bar k}{\tilde \A_{\bar k} } =  {\tilde \A_{\bar k}}$. 
Now,  Theorem \ref{thm_epi_presenting1} yields  $\epi (F+I_B)^*=\cl(\sco_{\bar k} \tilde \A_{\bar k}) = \cl \tilde \A_{\bar k}$, which, together with \eqref {eq_4.20az} 
leads to 
\begin{equation*}
\cl {\tilde \A}_{\bar k}\subset\cl {\tilde \A}\subset \cl {\tilde \B}\subset \epi (F+I_B)^* \subset  \cl {\tilde \A}_{\bar k}, 
\end{equation*}
and the conclusion follows. 
\end{proof}

\section{ Robust vector Farkas-type results }\label{sec3}

We retain all the notations used in the previous sections  and consider   the robust vector optimization problem (RVP) defined by   \eqref{rvop}   
with its feasible set $A$ as in \eqref{feas-A} and the assumption   
$A\cap\dom F\ne \emptyset$.    Consider the qualifying sets $\A$, $\B$ and $\A_{k}$ (for some $k \in \inte K$) defined respectively  by  \eqref{A}, \eqref{B}, and \eqref{Ak}. Moreover, 
we say that $\A$ ($\B$, $\A_{k}$, respectively)  is $k$-sectionally 
			convex  and  closed regarding $\V\times \W$ if $\cl (\sco_k \A)\cap (\V\times \W)=\A\cap (\V\times \W)$  ($\cl (\sco_k \B)\cap (\V\times \W)=\B\cap (\V\times \W)$, $\cl (\sco_k \A_{k} )\cap (\V\times \W)=\A_k\cap (\V\times \W)$, respectively). Let $  \emptyset \ne  \V\subset {\mathcal{L}(X,Y)}$ and $  \emptyset \ne \W\subset Y$.

We now establish some principles and results on  $(\V,\W)$-stable Farkas lemma  for vector-valued systems concerning the robust vector optimization problem $({\rm RVP})$.  In the first one, Theorem \ref{thm_CSFI}, for the sake of completeness,   we quote  $[({\rm a})\Leftrightarrow ({\rm b})]$ from   \cite[Theorem 3.2(ii)]{Robust}.    
Note also that  Theorem \ref{thm_CSFI} extends     \cite[Theorems 1,2]{DL17VJM}.

\begin{theorem}[Principles of stable robust vector Farkas lemma I] 
	\label{thm_CSFI} 
	Consider the following {statements}
	\begin{itemize}[nosep]
		\item[$\rm(a)$]  $\epi(F+I_A)^{\ast }\cap (\V\times \W)=\A\cap (\V\times \W)$,

	\item[$\rm(b)$]  $\epi(F+I_A)^{\ast }\cap (\V\times \W)=\B\cap (\V\times \W)$,

		\item[$\rm(c)$]    For any   $(L,y)\in \V\times \W$, the next two assertions are equivalent: 
		\begin{itemize}[nosep] 
			\item[$(\alpha)$] $G_u(x)\in -S,\;  x\in C, \; \forall u\in \mathcal{U}\; 
				\Longrightarrow \; y-L(x)+F(x)\notin -\inte K,$
			
			\item[$(\beta)$] There exist $u\in\U$ and $T\in \L_+(S,K)$ such that 
				$$F(x)+T\circ G_u(x)-L(x)+y\notin -\inte K,\;\forall x\in C.$$
		\end{itemize}

		\item[$\rm(d)$]   For any   $(L,y)\in \V\times \W$, the next two assertions are equivalent: 
		\begin{itemize}[nosep] 
			\item[$(\alpha)$] $G_u(x)\in -S,\;  x\in C, \; \forall u\in \mathcal{U}\; 
				\Longrightarrow \; y-L(x)+F(x)\notin -\inte K,$
				\item[$(\gamma)$] There exist $u\in\U$ and $T\in \L_+^w(S,K)$ such that 
					$$F(x)+T\circ G_u(x)-L(x)+y\notin -T(S)-\inte K,\;\forall x\in C.$$
			\end{itemize}
	\end{itemize}
Then, $[({\rm a})\Leftrightarrow ({\rm c})]$ and  $[({\rm b})\Leftrightarrow ({\rm d})]$.
\end{theorem}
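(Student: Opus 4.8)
The plan is to show the two equivalences by first establishing $[(\mathrm{a})\Leftrightarrow(\mathrm{c})]$ and then adapting the argument to $[(\mathrm{b})\Leftrightarrow(\mathrm{d})]$. Since $[(\mathrm{a})\Leftrightarrow(\mathrm{c})]$ is quoted from \cite[Theorem 3.2(ii)]{Robust}, the real work is the second equivalence, and the natural route is to translate each side of (b) and (d) into the epigraphical language already developed in Sections 3--4. First I would unwind (d)($\alpha$): by \eqref{eq111} applied to $F+I_A$, the implication in ($\alpha$) says precisely that $(L,y)\in\epi(F+I_A)^*$. Then I would unwind (d)($\gamma$): for fixed $u\in\U$ and $T\in\L_+^w(S,K)$, the condition $F(x)+T\circ G_u(x)-L(x)+y\notin -T(S)-\inte K$ for all $x\in C$ should be rephrased, using the definition of $\B$ in \eqref{B} together with the description of $I^*_{-S}(T)$, as the statement that $(L,y)$ belongs to $\bigcap_{v\in I^*_{-S}(T)}[\epi(F+I_C+T\circ G_u)^*+(0_{\mathcal L},v)]$. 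Taking the union over $(T,u)\in\L_+^w(S,K)\times\U$, assertion ($\gamma$) is exactly $(L,y)\in\B$.

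With these two translations in hand, the equivalence $[(\mathrm{b})\Leftrightarrow(\mathrm{d})]$ becomes almost tautological: (b) says $\epi(F+I_A)^*\cap(\V\times\W)=\B\cap(\V\times\W)$, while (d) says that for every $(L,y)\in\V\times\W$ we have $(L,y)\in\epi(F+I_A)^*$ iff $(L,y)\in\B$; these are literally the same assertion once one notes that $\B\subset\epi(F+I_A)^*$ always holds by Proposition \ref{epi_include}, so the two set-equalities restricted to $\V\times\W$ reduce to the pointwise biconditional. I would spell out the forward direction (assume (b), pick $(L,y)\in\V\times\W$, and chase the membership equalities through the two translations) and then the reverse direction (assume the biconditional in (d) holds for all $(L,y)\in\V\times\W$, and conclude the set equality by intersecting with $\V\times\W$). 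The same bookkeeping, using $\A$ in place of $\B$ and $\L_+(S,K)$, $T\circ G_u(x)-L(x)+y\notin-\inte K$ in place of the weak-operator version, recovers $[(\mathrm{a})\Leftrightarrow(\mathrm{c})]$, which lets me either cite \cite{Robust} or give the short self-contained argument.

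The step I expect to be the main obstacle is the precise reformulation of (d)($\gamma$) as membership in $\B$, i.e.\ verifying that ``$F(x)+T\circ G_u(x)-L(x)+y\notin -T(S)-\inte K$ for all $x\in C$'' is equivalent to ``$(L,y)\in\bigcap_{v\in I^*_{-S}(T)}[\epi(F+I_C+T\circ G_u)^*+(0_{\mathcal L},v)]$.'' This requires carefully identifying the set $I^*_{-S}(T)=\wsup\{-T(s):s\in S\}$ (the weak supremum defining the conjugate of the indicator of $-S$ composed with $T$) and then using the decomposition in Lemma \ref{lemextra2}(c) together with \eqref{eq_1aaa} to pass between ``$\notin -T(S)-\inte K$'' and ``$\notin -v-\inte K$ for the relevant $v$''. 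Once that correspondence is pinned down — essentially the vector analogue of the classical computation $(f+i_C+T\circ G)^{**}$ with a shift — the rest is routine diagram-chasing through \eqref{eq111}.
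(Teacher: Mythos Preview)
Your proposal is correct and follows essentially the same approach as the paper: translate $(\alpha)$ to $(L,y)\in\epi(F+I_A)^*$ via \eqref{eq111}, translate $(\beta)$ (resp.\ $(\gamma)$) to $(L,y)\in\A$ (resp.\ $(L,y)\in\B$), and observe that the resulting set equalities are the pointwise biconditionals. The paper's proof is terser---it simply asserts these translations and says the second equivalence is ``similar'' using $\L_+^w(S,K)$---whereas you correctly isolate the one genuinely nontrivial step, namely matching ``$\notin -T(S)-\inte K$ for all $x\in C$'' with membership in $\bigcap_{v\in I^*_{-S}(T)}[\epi(F+I_C+T\circ G_u)^*+(0_{\mathcal L},v)]$, and you point to the right tools (the description of $I^*_{-S}(T)=\wsup(-T(S))$ and Lemma~\ref{lemextra2}(c)). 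One small redundancy: you do not need the inclusion $\B\subset\epi(F+I_A)^*$ from Proposition~\ref{epi_include} to pass from the set equality to the pointwise biconditional---they are already identical statements.
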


\begin{proof}  The first equivalence,  $[({\rm a})\Leftrightarrow ({\rm c})]$,  is  Theorem 3.2(ii) in \cite{Robust}. However, for the sake of completeness, we 
give briefly the proof here. It is easy to  see that 
 $(\alpha)$ is equivalent to  $(L,y) \in \epi (F +I_A)^\ast $ while $(\beta)$ is equivalent to $(L,y) \in \A$. So,  $[({\rm a})\Leftrightarrow ({\rm c})]$ holds.  The proof of the second one,   $[({\rm b})\Leftrightarrow ({\rm d})]$,   can be obtained by using a similar way using the weak cone of positive operators  $\L_+^w(S,K)$ instead of $\L_+(S,K)$.     \end{proof}

For each  $k\in \inte K$, {recall that} (see \eqref{Ak}) 
$$\A_{k}= \bigcup_{(z^*,u)\in S^+\times \U}\epi (F+I_C+(k\cdot z^*)\circ G_u)^*.$$

Another principle for stable robust vector Farkas lemma based on $\A_k$ is given in the next theorem.

\begin{theorem}[Principle of stable robust vector Farkas lemma II] 
	\label{thm_CSFIbbis} 
	Let $k\in \inte K$, the following {statements} are equivelent
	\begin{itemize}[nosep]
		\item[$({\rm a}_k)$]  $\epi(F+I_A)^{\ast }\cap (\V\times \W)=\A_k\cap (\V\times \W)$,

		\item[$({\rm c}_k)$]      For any   $(L,y)\in \V\times \W$, the next two assertions are equivalent: 
		\begin{itemize}[nosep] 
			\item[$(\alpha)$] $G_u(x)\in -S,\;  x\in C, \; \forall u\in \mathcal{U}\; 
				\Longrightarrow \; y-L(x)+F(x)\notin -\inte K,$
			
			\item[$(\delta)$] There exist $u\in\U$ and $z^*\in  S^+$ such that 
				$$F(x)+(z^*\circ G_u)(x)k-L(x)+y\notin -\inte K,\;\forall x\in C.$$
		\end{itemize}
	\end{itemize}
\end{theorem}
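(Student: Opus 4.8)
The plan is to observe that, exactly as in the proof of Theorem~\ref{thm_CSFI}, both assertions $(\alpha)$ and $(\delta)$ are merely reformulations of membership statements, after which the equivalence $[({\rm a}_k)\Leftrightarrow({\rm c}_k)]$ becomes essentially a tautology. No convexity or closedness hypothesis on the data of (RVP) is needed here; the only tool is the characterization \eqref{eq111} of epigraphs of conjugate mappings.

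First I would check that, for a fixed $(L,y)\in\mathcal{L}(X,Y)\times Y$, assertion $(\alpha)$ holds if and only if $(L,y)\in\epi(F+I_A)^*$. Indeed, $\dom(F+I_A)=A\cap\dom F$, and for $x\in C$ with $G_u(x)\in-S$ for all $u\in\U$ but $x\notin\dom F$ one has $F(x)=+\infty_Y$, so $y-L(x)+F(x)=+\infty_Y\notin-\inte K$ trivially; hence the implication in $(\alpha)$ is equivalent to ``$y-L(x)+(F+I_A)(x)\notin-\inte K$ for all $x\in\dom(F+I_A)$'', which by \eqref{eq111} is exactly $(L,y)\in\epi(F+I_A)^*$.

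Next I would identify $(\delta)$ with the statement $(L,y)\in\A_k$. For $z^*\in S^+$ and $u\in\U$, the composite $(k\cdot z^*)\circ G_u$ is the mapping $x\mapsto\la z^*,G_u(x)\ra k$ on $\dom G_u$ and $+\infty_Y$ elsewhere, so on $C\cap\dom F\cap\dom G_u$ one has $(F+I_C+(k\cdot z^*)\circ G_u)(x)=F(x)+(z^*\circ G_u)(x)k$, while outside this set the expression $F(x)+(z^*\circ G_u)(x)k-L(x)+y$ equals $+\infty_Y$ and lies outside $-\inte K$ automatically. Therefore the inequality in $(\delta)$, for fixed $u$ and $z^*$, is equivalent to ``$y-L(x)+(F+I_C+(k\cdot z^*)\circ G_u)(x)\notin-\inte K$ for all $x\in\dom(F+I_C+(k\cdot z^*)\circ G_u)$'', i.e.\ to $(L,y)\in\epi(F+I_C+(k\cdot z^*)\circ G_u)^*$ by \eqref{eq111}. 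Taking the union over $(z^*,u)\in S^+\times\U$ and recalling the definition \eqref{Ak} of $\A_k$, we obtain: $(\delta)$ holds $\iff (L,y)\in\A_k$.

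Finally, combining the two equivalences: $({\rm c}_k)$ asserts that for every $(L,y)\in\V\times\W$ one has $(L,y)\in\epi(F+I_A)^*\iff (L,y)\in\A_k$, which is precisely the set equality $\epi(F+I_A)^*\cap(\V\times\W)=\A_k\cap(\V\times\W)$, that is, $({\rm a}_k)$. The only point requiring a little care is the bookkeeping with the value $+\infty$ in the composite $(k\cdot z^*)\circ G_u$ and in $F+I_A$, to ensure that restricting the quantifier ``$\forall x\in C$'' to the relevant effective domain does not alter the truth of either implication; this is routine given the conventions \eqref{conv} and the fact that any element of the form $+\infty_Y+(\text{something})$ never belongs to $-\inte K$. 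One may also note in passing that $\A_k\subset\epi(F+I_A)^*$ always holds by Proposition~\ref{epi_include}, so the substantive content of $({\rm a}_k)$ is the reverse inclusion on $\V\times\W$.
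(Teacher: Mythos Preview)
Your proof is correct and follows essentially the same approach as the paper: you identify $(\alpha)$ with $(L,y)\in\epi(F+I_A)^*$ and $(\delta)$ with $(L,y)\in\A_k$ via \eqref{eq111}, from which the equivalence $({\rm a}_k)\Leftrightarrow({\rm c}_k)$ is immediate. Your additional bookkeeping about the $+\infty_Y$ values is more explicit than the paper's terse argument, but the underlying idea is identical.
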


\begin{proof}
The proof is similar to that  of Theorem \ref{thm_CSFI}. It is clear that $(\alpha)$ is equivalent to $(L,y) \in \epi (F + I_A)^\ast$ (by \eqref{eq111})  and 
$(\delta)$ is equivalent to $(L,y) \in \A_{k}$. So, $({\rm a}_k)$ and $({\rm c}_k)$ are equivalent. 
\end{proof}

Now, { we are  ready to establish  } principles of $(\V,\W)$-stable robust vector Farkas lemma  in convex setting (i.e., under  the hypothesis $(\mathcal{H}_0)$). These results are obtained by combining Theorem \ref{thm_CSFI} and  the results on representations  of $\epi(F+I_A)^*$ provided in Section 4. 

\begin{theorem}[Principles of stable robust convex  vector Farkas lemma I] 
	\label{thm_FLI}
Assume that   that the hypothesis $(\mathcal{H}_0)$   holds.  Consider the following {statements}:
 	\begin{itemize}[nosep]
		\item[$({\rm a}_1)$] $ \exists k\in \inte K$ s.t.   $\A$ is $k$-sectionally 
			convex  and  closed regarding $\V\times \W$,

		\item[$({\rm b}_1)$] $ \exists k\in \inte K$ s.t.  $\B$ is $k$-sectionally 
			convex  and  closed regarding $\V\times \W$.

	\end{itemize}
Then, $[({\rm a}_1)\Leftrightarrow ({\rm c})]$ and  $[({\rm b}_1)\Leftrightarrow ({\rm d})]$, where {$(\rm c)$ and} $(\rm d)$ are the statements  in  Theorem \ref{thm_CSFI}.
\end{theorem}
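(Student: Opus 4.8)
The plan is to deduce Theorem~\ref{thm_FLI} from the principle already recorded in Theorem~\ref{thm_CSFI} by showing that, under $(\mathcal{H}_0)$, each of the ``sectionally convex and closed regarding $\V\times\W$'' conditions $({\rm a}_1)$ and $({\rm b}_1)$ is equivalent to the corresponding set-equality condition $({\rm a})$, respectively $({\rm b})$, of Theorem~\ref{thm_CSFI}. Since Theorem~\ref{thm_CSFI} already gives $[({\rm a})\Leftrightarrow({\rm c})]$ and $[({\rm b})\Leftrightarrow({\rm d})]$, it then suffices to establish the two equivalences $[({\rm a}_1)\Leftrightarrow({\rm a})]$ and $[({\rm b}_1)\Leftrightarrow({\rm b})]$; these are essentially identical in structure, so I would carry out the first in detail and remark that the second follows by the same argument with $\B$ in place of $\A$ (and Theorem~\ref{thm_epi-presenting2}'s equality $\epi(F+I_A)^*=\cl(\sco_k\B)$ in place of the $\A$-version).

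First I would fix an arbitrary $k\in\inte K$ and recall from Theorem~\ref{thm_epi-presenting2} (valid under $(\mathcal{H}_0)$) that $\epi(F+I_A)^*=\cl(\sco_k\A)$. Intersecting both sides with $\V\times\W$ gives
\[
\epi(F+I_A)^*\cap(\V\times\W)=\cl(\sco_k\A)\cap(\V\times\W).
\]
Hence the statement ``$\A$ is $k$-sectionally convex and closed regarding $\V\times\W$'', namely $\cl(\sco_k\A)\cap(\V\times\W)=\A\cap(\V\times\W)$, is literally the same as the statement $\epi(F+I_A)^*\cap(\V\times\W)=\A\cap(\V\times\W)$, which is $({\rm a})$. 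Therefore: if there exists $k\in\inte K$ making $\A$ $k$-sectionally convex and closed regarding $\V\times\W$, then $({\rm a})$ holds; conversely, if $({\rm a})$ holds then, by the displayed identity, $\A$ is $k$-sectionally convex and closed regarding $\V\times\W$ for \emph{every} $k\in\inte K$, in particular for some such $k$, giving $({\rm a}_1)$. This proves $[({\rm a}_1)\Leftrightarrow({\rm a})]$, and combining with Theorem~\ref{thm_CSFI} yields $[({\rm a}_1)\Leftrightarrow({\rm c})]$.

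For $[({\rm b}_1)\Leftrightarrow({\rm d})]$ I would repeat the argument verbatim, using the second equality in \eqref{eq_26bis}, $\epi(F+I_A)^*=\cl(\sco_k\B)$, to identify $({\rm b}_1)$ with $({\rm b})$, and then invoke $[({\rm b})\Leftrightarrow({\rm d})]$ from Theorem~\ref{thm_CSFI}. The only subtlety worth flagging—and the place a careless argument could slip—is the quantifier on $k$: the defining conditions $({\rm a}_1),({\rm b}_1)$ ask only for the \emph{existence} of some $k\in\inte K$, whereas Theorem~\ref{thm_epi-presenting2} holds for \emph{all} $k\in\inte K$; the point is precisely that, because the right-hand side $\epi(F+I_A)^*$ of \eqref{eq_26bis} does not depend on $k$, the property of $\A$ (resp. $\B$) being $k$-sectionally convex and closed regarding $\V\times\W$ is in fact independent of the choice of $k\in\inte K$, so ``for some $k$'' and ``for all $k$'' coincide here and both collapse to the $k$-free condition $({\rm a})$ (resp. $({\rm b})$). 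Beyond this observation the proof is a one-line substitution into the already-proved Theorem~\ref{thm_CSFI}, so I do not anticipate any real obstacle; if anything, the main care is bookkeeping to make sure the intersection with $\V\times\W$ is applied to the correct representation of $\epi(F+I_A)^*$.
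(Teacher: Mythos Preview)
Your proposal is correct and follows essentially the same approach as the paper: under $(\mathcal{H}_0)$, Theorem~\ref{thm_epi-presenting2} gives $\epi(F+I_A)^*=\cl(\sco_k\A)=\cl(\sco_k\B)$ for every $k\in\inte K$, so $({\rm a}_1)$ and $({\rm b}_1)$ are equivalent to the set equalities $({\rm a})$ and $({\rm b})$ of Theorem~\ref{thm_CSFI}, and the conclusion follows immediately from that theorem. Your extra remark on the ``for some $k$'' versus ``for all $k$'' quantifier is a helpful clarification that the paper leaves implicit.
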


\begin{proof} 
Let $k\in  \inte K$. As $(\mathcal{H}_0)$ holds, it follows from Theorem 
	\ref{thm_epi-presenting2} that 
	$$ \epi (F+I_A)^*= \cl(\sco\nolimits_k\A)=    \cl(\sco\nolimits_k\B). $$
	So, the statements $({\rm a}_1)$ and $({\rm b}_1)$ are respectively equivalent to
		$$\epi(F+I_A)^{\ast }\cap (\V\times \W)=\A\cap (\V\times \W)\ \ 
{\rm  and}\ \ 
\epi(F+I_A)^{\ast }\cap (\V\times \W)=\B\cap (\V\times \W)$$
for any $ \emptyset \ne \V \subset  \L(X, Y)$ and any $\emptyset \ne \W \subset Y$. 
	The conclusion now follows from Theorem \ref{thm_CSFI}. 
\end{proof}

For each  $k\in \inte K$, {recall that}  the set $\A_k$ is defined by  \eqref{Ak}.  
 We are now seeking for other alternative qualifying conditions    based on the set $\A_{k}$ that guarantee the previous versions of robust vector Farkas lemmas.

\begin{theorem}[Stable robust convex vector  Farkas lemma I]
\label{thm_5.3}
Assume    that the hypothesis $(\mathcal{H}_0)$   and the following condition   $({\rm a}_2)$  hold:
\begin{itemize}[nosep]
\item[$({\rm a}_2)$]  $ \exists k\in \inte K$ s.t.     {$\A_{k}$} is $k$-sectionally convex and closed regarding $\V\times \W$.
\end{itemize}
Then, the assertions $({\rm c})$, $({\rm d})$ in Theorem \ref{thm_CSFI} hold.
\end{theorem}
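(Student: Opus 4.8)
The plan is to reduce Theorem \ref{thm_5.3} to Theorem \ref{thm_CSFIbbis} together with Theorem \ref{thm_epi_presenting1}, and then to bridge from the ``$\A_k$-version'' of the robust vector Farkas lemma to the ``$\A$-version'' and ``$\B$-version'' by exploiting the inclusions in Proposition \ref{epi_include}. First I would fix a $k\in\inte K$ witnessing $({\rm a}_2)$, so that
$$\cl(\sco\nolimits_k \A_k)\cap(\V\times\W)=\A_k\cap(\V\times\W).$$
By Theorem \ref{thm_epi_presenting1}, the hypothesis $(\mathcal{H}_0)$ gives $\epi(F+I_A)^*=\cl(\sco_k\A_k)$, so this identity is exactly the statement $({\rm a}_k)$ of Theorem \ref{thm_CSFIbbis}. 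Applying that theorem yields the equivalence $[({\rm a}_k)\Leftrightarrow({\rm c}_k)]$, so in particular $({\rm c}_k)$ holds; this already establishes that for every $(L,y)\in\V\times\W$ the implication $(\alpha)$ is equivalent to the ``$\A_k$-type'' certificate $(\delta)$.

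It remains to promote this to assertions $({\rm c})$ and $({\rm d})$ of Theorem \ref{thm_CSFI}. For $({\rm c})$, note that $(\delta)\Rightarrow(\beta)$ is immediate: if $z^*\in S^+$, then $T:=k\cdot z^*\in\L_+(S,K)$ because $k\in\inte K\subset K$ and $T(S)=\la z^*,S\ra k\subset\RR_+k\subset K$, and then the inequality in $(\delta)$ is literally the inequality in $(\beta)$ for this $T$. Conversely $(\beta)\Rightarrow(\alpha)$ is the trivial (always-valid) direction of the vector Farkas lemma, proved exactly as in the easy half of Theorem \ref{thm_CSFI}: from $F(x)+T\circ G_u(x)-L(x)+y\notin-\inte K$ for all $x\in C$ and $T(G_u(x))\in -K$ whenever $G_u(x)\in-S$, one gets $F(x)-L(x)+y=[F(x)+T\circ G_u(x)-L(x)+y]+[-T\circ G_u(x)]\notin-\inte K$ using \eqref{eq_1aaa}, for all feasible $x$, which is $(\alpha)$. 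Combining $(\alpha)\Rightarrow(\delta)\Rightarrow(\beta)\Rightarrow(\alpha)$ gives the equivalence $(\alpha)\Leftrightarrow(\beta)$, i.e. $({\rm c})$. The argument for $({\rm d})$ is the same once one observes $\L_+(S,K)\subset\L_+^w(S,K)$ and, for $T\in\L_+(S,K)$, $-T(S)\subset -K$, so $-T(S)-\inte K\subset-\inte K$; hence the certificate $(\beta)$ implies the weaker certificate $(\gamma)$, and $(\gamma)\Rightarrow(\alpha)$ again by the elementary argument (using $-T(S)-\inte K\subset-\inte K$). Thus $(\alpha)\Rightarrow(\delta)\Rightarrow(\beta)\Rightarrow(\gamma)\Rightarrow(\alpha)$ closes the loop and gives $({\rm d})$.

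The only genuinely non-mechanical point is the very first reduction: verifying that $({\rm a}_2)$ really is the hypothesis $({\rm a}_k)$ of Theorem \ref{thm_CSFIbbis}. That step is where $(\mathcal{H}_0)$ is used in full strength, via Theorem \ref{thm_epi_presenting1}'s identity $\epi(F+I_A)^*=\cl(\sco_k\A_k)$; without it one only has the inclusion $\A_k\subset\epi(F+I_A)^*$ from Proposition \ref{epi_include} and cannot recover $(\alpha)\Rightarrow(\delta)$. Everything else is either a direct invocation of the already-proved Theorems \ref{thm_CSFIbbis} and \ref{thm_epi_presenting1}, or the soft separation-free implications between the three types of dual certificates, which I expect to be routine. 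So the main (and essentially only) obstacle is organizing these implications cleanly; there is no hidden analytic difficulty, since all the closure/convexity work has been front-loaded into Section 4.
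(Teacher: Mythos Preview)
Your proof is correct and uses the same three ingredients as the paper (Theorem \ref{thm_epi_presenting1}, Proposition \ref{epi_include}, and the Farkas principles), but the organization differs. The paper works entirely at the set level: once $(\mathcal{H}_0)$ and $({\rm a}_2)$ give $\epi(F+I_A)^*\cap(\V\times\W)=\A_k\cap(\V\times\W)$, the chain $\A_k\subset\A\subset\B\subset\epi(F+I_A)^*$ from Proposition \ref{epi_include} immediately forces equality throughout after intersecting with $\V\times\W$, i.e.\ conditions $({\rm a})$ and $({\rm b})$ of Theorem \ref{thm_CSFI} hold, and one invokes that theorem directly. Your route via Theorem \ref{thm_CSFIbbis} to obtain $({\rm c}_k)$ and then upgrading the certificates $(\delta)\Rightarrow(\beta)\Rightarrow(\gamma)$ by hand is really just re-proving those same set inclusions at the level of individual $(L,y)$; it works, but the sandwich argument is shorter.

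One small slip: your parenthetical ``using $-T(S)-\inte K\subset-\inte K$'' for the direction $(\gamma)\Rightarrow(\alpha)$ is only valid when $T\in\L_+(S,K)$, whereas $(\gamma)$ posits $T\in\L_+^w(S,K)$. The implication $(\gamma)\Rightarrow(\alpha)$ does hold in general, but by the direct argument: for $x\in A$ one has $T(G_u(x))\in -T(S)$, so $y-L(x)+F(x)\in-\inte K$ would give $F(x)+T\circ G_u(x)-L(x)+y\in-\inte K-T(S)=-T(S)-\inte K$, contradicting $(\gamma)$. This is exactly the content of $\B\subset\epi(F+I_A)^*$ in Proposition \ref{epi_include}, which is another reason the paper's set-level sandwich is the cleaner packaging.
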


\begin{proof}


Firstly, according to  Proposition \ref{epi_include}, one has
\begin{equation}\label{eq_34zzz}
\A_{k}\cap (\V\times \W)\subset \A\cap (\V\times \W)\subset \B  \cap (\V\times \W)    \subset \epi (F+I_A)^*\cap (\V\times \W).
\end{equation}
On the other hand, under the hypothesis $(\mathcal{H}_0)$, Theorem \ref{thm_epi_presenting1}  yields  $\epi (F+I_A)^*= {\cl(\sco_{k} \A_{k})}$, which  
combining with 
 $({\rm a}_2)$, one gets   
$$   \epi (F+I_A)^* \cap (\V\times \W) =     \cl(\sco_{k_0}  {\A_{k_0}})\cap (\V\times \W)={\A_{k_0}} \cap (\V\times \W),$$
This, together with \eqref{eq_34zzz}, assures  that $({\rm a})$ {and  $({\rm b})$} in Theorem \ref{thm_CSFI} hold. The conclusion now follows from Theorem \ref{thm_CSFI}. 
\end{proof}

\begin{theorem}[Principles for stable robust convex vector Farkas-lemma II] 
	\label{thm_FLIzzz}  Let $k\in \inte K$ and     
assume that    the hypothesis $(\mathcal{H}_0)$   holds.  Then the following {statements} $({\rm a}^\prime _k)$ and $({\rm c}_k)$   are equivalent: 
\begin{itemize}[nosep]
\item[$({\rm a}^\prime_k)$] $\A_k$ is $k$-sectionally 
			convex  and  closed regarding $\V\times \W$, 
\item[$({\rm c}_k)$]    For any   $(L,y)\in \V\times \W$, the next two assertions are equivalent: 
		\begin{itemize}[nosep] 
			\item[$(\alpha)$] $G_u(x)\in -S,\;  x\in C, \; \forall u\in \mathcal{U}\; 
				\Longrightarrow \; y-L(x)+F(x)\notin -\inte K,$
			
			\item[$(\delta)$] There exist $u\in\U$ and $z^*\in  S^+$ such that 
				$$F(x)+(z^*\circ G_u)(x)k-L(x)+y\notin -\inte K,\;\forall x\in C.$$
		\end{itemize}
\end{itemize}

\end{theorem}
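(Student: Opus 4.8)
The plan is to deduce the equivalence $({\rm a}^\prime_k)\Leftrightarrow({\rm c}_k)$ by combining the asymptotic representation of $\epi(F+I_A)^{\ast}$ obtained in Section~4 with the purely ``algebraic'' principle already recorded in Theorem~\ref{thm_CSFIbbis}, exactly in the spirit of the proof of Theorem~\ref{thm_FLI}.

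First I would fix $k\in\inte K$ and note that, since $(\mathcal{H}_0)$ is assumed, Theorem~\ref{thm_epi_presenting1} applies and yields the identity
$$\epi(F+I_A)^{\ast}=\cl(\sco\nolimits_k\A_k).$$
Intersecting both sides with $\V\times\W$, the condition defining $({\rm a}^\prime_k)$, namely $\cl(\sco\nolimits_k\A_k)\cap(\V\times\W)=\A_k\cap(\V\times\W)$, becomes verbatim
$$\epi(F+I_A)^{\ast}\cap(\V\times\W)=\A_k\cap(\V\times\W),$$
which is precisely statement $({\rm a}_k)$ of Theorem~\ref{thm_CSFIbbis}. Hence $({\rm a}^\prime_k)\Leftrightarrow({\rm a}_k)$.

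Next I would invoke Theorem~\ref{thm_CSFIbbis} itself, which asserts $({\rm a}_k)\Leftrightarrow({\rm c}_k)$ for every $k\in\inte K$; recall that this step rests only on the characterization \eqref{eq111}, identifying $(\alpha)$ with $(L,y)\in\epi(F+I_A)^{\ast}$ and $(\delta)$ with $(L,y)\in\A_k$, so that no convexity is needed there. Chaining the two equivalences gives $({\rm a}^\prime_k)\Leftrightarrow({\rm c}_k)$, which completes the argument.

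I do not anticipate a genuine obstacle: the whole content is the translation between the two descriptions of $\epi(F+I_A)^{\ast}$. The one delicate point worth flagging is that replacing $\epi(F+I_A)^{\ast}$ by $\cl(\sco\nolimits_k\A_k)$ is legitimate \emph{only} because $(\mathcal{H}_0)$ secures the hypotheses of Theorem~\ref{thm_epi_presenting1}; without that convexity/closedness package the representation can fail, and then $({\rm a}^\prime_k)$ would no longer be equivalent to $({\rm a}_k)$, so the hypothesis $(\mathcal{H}_0)$ cannot be dropped from the statement.
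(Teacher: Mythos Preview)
Your proposal is correct and follows exactly the same approach as the paper: use $(\mathcal{H}_0)$ and Theorem~\ref{thm_epi_presenting1} to rewrite $\cl(\sco_k\A_k)$ as $\epi(F+I_A)^*$, thereby identifying $({\rm a}^\prime_k)$ with $({\rm a}_k)$, and then invoke Theorem~\ref{thm_CSFIbbis} for the equivalence $({\rm a}_k)\Leftrightarrow({\rm c}_k)$. Your added remark on why $(\mathcal{H}_0)$ is essential is a useful clarification but does not change the argument.
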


\begin{proof}  As   $(\mathcal{H}_0)$   holds,   it follows from Theorem 4.1 that $\epi (F+I_A)^*=\cl(\sco_k \A_k)$,  and then $({\rm a}^\prime_k)$ is nothing else but $({\rm a}_k)$ in Theorem 5.2. The conclusion now follows from Theorem 5.2. 
\end{proof}

Some sufficient conditions for  $k$-sectional convexity and $k$-sectional closedness of the sets $\A_k$ (with $ k \in \inte K$), $\A$, and $\B$  will be given below. We first consider  some more  assumptions:

\begin{tabular}{l|l}
$(\mathcal{H}_1)$&  The collection $\Big(u\mapsto G_u(x)\Big)_{x\in C\cap\dom F}$ is  {uniformly $S^+$-concave},\\
\end{tabular}

\medskip 

\begin{tabular}{l|l}
$(\mathcal{H}_2)$&  The mapping $u\mapsto G_u(x)$ is {$S^+$-uniformly usc} for each  {$x\in C\cap\dom F$}.\\
\end{tabular}

\begin{proposition}\label{pro_5.1}
If $(\mathcal{H}_0)$ and  $(\mathcal{H}_1)$ hold then {$\A_k$} is $k$-sectionally convex for each  $k\in \inte K$.
\end{proposition}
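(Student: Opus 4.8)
The plan is to fix $k\in\inte K$, a base point $(L,y)\in\L(X,Y)\times Y$, and show that $\A_k\cap\big(E_k+(L,y)\big)$ is convex, where $E_k=k\cdot(X^*\times\R)$ as in \eqref{Ek}. So I would take two points $a_1,a_2\in\A_k\cap\big(E_k+(L,y)\big)$ and $\lambda\in\,]0,1[$, and prove that $\lambda a_1+(1-\lambda)a_2\in\A_k$. Writing $a_i=k\cdot(x_i^*,r_i)+(L,y)$ with $(x_i^*,r_i)\in X^*\times\R$, and using that $a_i\in\A_k$ means there are $z_i^*\in S^+$ and $u_i\in\U$ with $a_i\in\epi(F+I_C+(k\cdot z_i^*)\circ G_{u_i})^*$, the characterization \eqref{eq111} gives, for $i=1,2$,
\[
y+r_ik-L(x)-\la x_i^*,x\ra k+F(x)+(z_i^*\circ G_{u_i})(x)\,k\notin-\inte K,\quad\forall x\in C\cap\dom F.
\]
This is exactly the structure met in the proof of Proposition \ref{epirco2}, except that there the single mapping $G$ was common to both points; here the two constraint mappings $G_{u_1}$ and $G_{u_2}$ differ, and this is precisely where hypothesis $(\mathcal{H}_1)$ must enter.

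The key step is to produce a single pair $(z^*,u)\in S^+\times\U$ and a single scalar coefficient for $k$ that dominates the convex combination. By $(\mathcal{H}_1)$, applied to the collection $\big(u\mapsto G_u(x)\big)_{x\in C\cap\dom F}$ with the functionals $\lambda z_1^*,\,(1-\lambda)z_2^*\in S^+$ (note $S^+$ is a convex cone, so these lie in $S^+$) and the points $u_1,u_2\in\U$, there exist $z^*\in S^+$ and $u\in\U$ such that
\[
\big((\lambda z_1^*)\circ G_{u_1}\big)(x)+\big(((1-\lambda)z_2^*)\circ G_{u_2}\big)(x)\le(z^*\circ G_u)(x),\quad\forall x\in C\cap\dom F,
\]
i.e.\ $\lambda(z_1^*\circ G_{u_1})(x)+(1-\lambda)(z_2^*\circ G_{u_2})(x)\le(z^*\circ G_u)(x)$ for all such $x$. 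Multiplying this scalar inequality by $k\in K$ yields $\big[\lambda(z_1^*\circ G_{u_1})(x)+(1-\lambda)(z_2^*\circ G_{u_2})(x)-(z^*\circ G_u)(x)\big]k\in-K$ for every $x\in C\cap\dom F$.

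Now I would take the convex combination of the two displayed non-membership relations and invoke Lemma \ref{pro_2.1aaa} (with $k_0=k\in K$) to obtain
\[
y+\bar rk-L(x)-\la\bar x^*,x\ra k+F(x)+\big[\lambda(z_1^*\circ G_{u_1})(x)+(1-\lambda)(z_2^*\circ G_{u_2})(x)\big]k\notin-\inte K,\quad\forall x\in C\cap\dom F,
\]
where $\bar r=\lambda r_1+(1-\lambda)r_2$ and $\bar x^*=\lambda x_1^*+(1-\lambda)x_2^*$. Adding the element of $-K$ produced in the previous paragraph and using \eqref{eq_1aaa} (if $w\in-K$ and $w'\notin-\inte K$ then $w+w'\notin-\inte K$, equivalently the version with signs flipped), I replace the bracketed term by $(z^*\circ G_u)(x)k$, getting
\[
y+\bar rk-L(x)-\la\bar x^*,x\ra k+F(x)+(z^*\circ G_u)(x)\,k\notin-\inte K,\quad\forall x\in C\cap\dom F.
\]
By \eqref{eq111} this says $\lambda a_1+(1-\lambda)a_2=k\cdot(\bar x^*,\bar r)+(L,y)\in\epi(F+I_C+(k\cdot z^*)\circ G_u)^*\subset\A_k$, which completes the proof. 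The main obstacle is the bookkeeping around $(\mathcal{H}_1)$: one must feed the \emph{scaled} functionals $\lambda z_1^*$ and $(1-\lambda)z_2^*$ into the uniform $S^+$-concavity condition so that the resulting inequality already carries the convex-combination weights, and then convert that scalar inequality into a cone statement via multiplication by $k\in K$ and absorb it using the cone-arithmetic facts \eqref{eq_1aaa}. Everything else is a direct parallel of the argument in Proposition \ref{epirco2}.
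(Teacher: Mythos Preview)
Your proof is correct and follows essentially the same route as the paper's: fix $k$ and a base point, use \eqref{eq111} to unpack membership in $\epi(F+I_C+(k\cdot z_i^*)\circ G_{u_i})^*$, apply Lemma~\ref{pro_2.1aaa} to combine the two relations, then invoke $(\mathcal{H}_1)$ with the scaled functionals $\lambda z_1^*,(1-\lambda)z_2^*\in S^+$ to produce a single $(\bar z^*,\bar u)$ dominating the convex combination, and finish via \eqref{eq_1aaa} and \eqref{eq111}. One cosmetic slip: your parenthetical ``if $w\in-K$ and $w'\notin-\inte K$ then $w+w'\notin-\inte K$'' is false as stated---what you actually use (and what \eqref{eq_1aaa} says) is that adding an element of $K$, namely $\big[(z^*\circ G_u)(x)-\lambda(z_1^*\circ G_{u_1})(x)-(1-\lambda)(z_2^*\circ G_{u_2})(x)\big]k\in K$, preserves non-membership in $-\inte K$; the conclusion you draw is nonetheless correct.
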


\begin{proof}
 Take $k_0\in \inte K$ and $(L_0, y_0) \in \mathcal{L}(X,Y)\times Y$, we will prove that ${\A_{k_0}}\cap [{E_{k_0}}+(L_0,y_0)]$ is a convex set with ${E_{k_0}}=k_0\cdot (X^*\times \RR)$. For this, take  {$a_1,a_2\in \A_{k_0}\cap [E_{k_0}+(L_0,y_0)]$} and   $\lambda\in ]0,1[$, we show that  $\lambda a_1+(1-\lambda)a_2 \in  { \A_{k_0}}$.

For $i=1,2$, as $a_i\in  {\A_{k_0}}\cap [{E_{k_0}}+(L_0,y_0)]$, there exist $(x^*_i,r_i)\in  X^*\times \mathbb{R}$ and $(z^*_i,u_i)\in S^+\times \U$ such that
$a_i={k_0}(x^*_i,r_i)+(L_0,y_0)$ and (according to \eqref{eq111}) 
\begin{align*}
y_0+r_ik_0 -L_0(x)-\langle x_i^*, x\rangle k_0+F(x)+(z^*_i\circ G_{u_i})(x)k_0 &\notin -\inte K,\; \forall {x\in C\cap\dom F}.
\end{align*}
Lemma \ref{pro_2.1aaa} applying to $y=y_0-L_0(x)+F(x)$, $\alpha=r_1-\langle x^*_1, x\rangle+(z^*_1\circ G_{u_1})(x)$ and $\beta=r_2-\langle x^*_2, x\rangle+(z^*_2\circ G_{u_2})(x)$
yields
\begin{gather}
y_0+[\lambda r_1+(1-\lambda )r_2]k_0 -L_0(x)-\langle \lambda x_1^*+(1-\lambda )x_2^*, x\rangle k_0+F(x)\notag\\
+[\lambda (z^*_1\circ G_{u_1})(x)+(1-\lambda)(z^*_2\circ G_{u_2})(x)]k_0 \notin -\inte K,\quad \forall x\in C\cap \dom F.\label{eq_34aaa}
\end{gather}
On the other hand,  as $(\mathcal{H}_1)$ holds, there exists $(\bar z^*, \bar u)\in S^+\times \U$ such that
$$\lambda (z^*_1\circ G_{u_1})(x)+(1-\lambda)(z^*_2G_{u_2})(x)\le (\bar z^*\circ G_{\bar u})(x), \forall x\in C\cap \dom F,$$
or, equivalently, 
\begin{equation}
\Big((\bar z^*\circ G_{\bar u})(x)-[\lambda (z^*_1\circ G_{u_1})(x)+(1-\lambda)(z^*_2G_{u_2})(x)]\Big)k_0\in K.\label{eq_35aaa}
\end{equation}
By \eqref{eq_1aaa}, we get from  \eqref{eq_34aaa} and  \eqref{eq_35aaa},  
$$y_0+[\lambda r_1+(1-\lambda )r_2]k_0 -L_0(x)-\langle \lambda x_1^*+(1-\lambda )x_2^*, x\rangle k_0+F(x)+ (\bar z^*\circ G_{\bar u})(x){k_0}\notin -\inte K, $$
{for all $x\in C\cap\dom F$},  which means  that (see \eqref{eq111})  $\lambda a_1+(1-\lambda )a_2 =(L_0,y_0)+k_0[\lambda (x^*_1, r_1)+(1-\lambda)(x^*_2,r_2)]
\in \epi (F+(k_0\cdot \bar z^*)\circ G)^*\subset {\A_{k_0}}$ . The proof is complete.
\end{proof}

\begin{proposition}\label{pro_5.1zzzz}
Assume that  $\U$ is a compact space, that $Z$ is {a normed space}, and that $(\mathcal{H}_2)$ and the following Slater-type condition hold:
\begin{itemize}[nosep]
\item[$(C_0)$] $\forall u\in \U,\;\exists  x_u\in C\cap\dom F: G_u(x_u) \in - \inte S.$
\end{itemize}
Then, {$\A_{k}$} is $k$-sectionally closed for each  $k\in \inte K$
\end{proposition}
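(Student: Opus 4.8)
The plan is to verify the definition of $k$-sectional closedness directly. Fix $k\in\inte K$ and a coset representative $(L_0,y_0)\in\mathcal{L}(X,Y)\times Y$, and set $E_k=k\cdot(X^*\times\RR)$; since $\RR k$ is a closed line in $Y$, $E_k$ is a closed subspace of $E=\mathcal{L}(X,Y)\times Y$ (a pointwise limit of maps $k\cdot x_\alpha^*$ has values in $\RR k$, hence is again some $k\cdot x^*$), so it suffices to take a net $(a_\alpha)\subset\A_k\cap[E_k+(L_0,y_0)]$ with $a_\alpha\to a$ in $E$ and show $a\in\A_k$ ($a\in E_k+(L_0,y_0)$ being automatic). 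Write $a_\alpha=(L_0+k\cdot x_\alpha^*,\,y_0+r_\alpha k)$ and $a=(L_0+k\cdot x^*,\,y_0+rk)$, where $x_\alpha^*\overset{*}{\rightharpoonup}x^*$ and $r_\alpha\to r$ follow from $a_\alpha\to a$. For each $\alpha$ pick $(z_\alpha^*,u_\alpha)\in S^+\times\U$ witnessing $a_\alpha\in\A_k$; by \eqref{eq111} (reading each $G_u$ as $Z$-valued, as in $(\mathcal{H}_0)$, so that $\dom(F+I_C+(k\cdot z^*)\circ G_u)=C\cap\dom F$) this means
\[
[y_0-L_0(x)+F(x)]+\big(r_\alpha-\la x_\alpha^*,x\ra+(z_\alpha^*\circ G_{u_\alpha})(x)\big)k\notin-\inte K,\qquad\forall x\in C\cap\dom F.
\]
Applying Lemma \ref{lem_2.2zba} to $v_x:=y_0-L_0(x)+F(x)$ in the direction $k$, with $\bar\alpha_x\in\RR$ the resulting threshold, this is equivalent to the scalar system
\[
(z_\alpha^*\circ G_{u_\alpha})(x)\ \ge\ \bar\alpha_x-r_\alpha+\la x_\alpha^*,x\ra,\qquad\forall x\in C\cap\dom F,
\]
whose right-hand side converges, for each fixed $x$, to $\bar\alpha_x-r+\la x^*,x\ra$.

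First I would use the compactness of $\U$ to pass to a subnet with $u_\alpha\to\bar u$. The crux is then a uniform bound on $\|z_\alpha^*\|$. Apply $(C_0)$ at $\bar u$ to get $\bar x\in C\cap\dom F$ with $-G_{\bar u}(\bar x)\in\inte S$, and pick $c>0$ with $-G_{\bar u}(\bar x)+cB_Z\subset S$, $B_Z$ the closed unit ball of $Z$; then $\|z^*\|\le c^{-1}\la z^*,-G_{\bar u}(\bar x)\ra$ for all $z^*\in S^+$. The scalar inequality at $x=\bar x$ gives $(z_\alpha^*\circ G_{u_\alpha})(\bar x)\ge-M$ eventually, for some $M\in\RR$. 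To compare $G_{u_\alpha}(\bar x)$ with $G_{\bar u}(\bar x)$, I would prove $d\big(G_{u_\alpha}(\bar x)-G_{\bar u}(\bar x),\,-S\big)\to0$ (with $d(\cdot,-S)$ the distance to $-S$): recalling $d(z,-S)=\sup\{\la w,z\ra:w\in S^+,\ \|w\|\le1\}$, if this distance stayed $\ge\varepsilon_0>0$ along a subnet, one could choose $w_\alpha\in S^+\cap B_{Z^*}$ with $\la w_\alpha,G_{u_\alpha}(\bar x)\ra\ge\la w_\alpha,G_{\bar u}(\bar x)\ra+\varepsilon_0/2$; as $Z$ is normed, $S^+\cap B_{Z^*}$ is weak*-compact, so along a further subnet $w_\alpha\overset{*}{\rightharpoonup}\bar w\in S^+$, and $(\mathcal{H}_2)$ ($S^+$-uniform usc of $u\mapsto G_u(\bar x)$) applied to the triples $\big(w_\alpha,u_\alpha,\la w_\alpha,G_{\bar u}(\bar x)\ra+\varepsilon_0/2\big)$ forces $\la\bar w,G_{\bar u}(\bar x)\ra\ge\la\bar w,G_{\bar u}(\bar x)\ra+\varepsilon_0/2$ --- a contradiction. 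Hence, writing $G_{u_\alpha}(\bar x)-G_{\bar u}(\bar x)=p_\alpha+q_\alpha$ with $p_\alpha\in-S$ and $\|q_\alpha\|=:\varepsilon_\alpha\to0$,
\[
\la z_\alpha^*,-G_{\bar u}(\bar x)\ra=-(z_\alpha^*\circ G_{u_\alpha})(\bar x)+\la z_\alpha^*,p_\alpha\ra+\la z_\alpha^*,q_\alpha\ra\ \le\ M+\varepsilon_\alpha\|z_\alpha^*\|,
\]
so $(1-\varepsilon_\alpha/c)\|z_\alpha^*\|\le M/c$, and therefore $\|z_\alpha^*\|\le 2M/c$ for $\alpha$ large.

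Once $(z_\alpha^*)$ is bounded, I would invoke Banach--Alaoglu to extract a further subnet with $z_\alpha^*\overset{*}{\rightharpoonup}\bar z^*$, and $\bar z^*\in S^+$ since $S^+$ is weak*-closed. For each fixed $x\in C\cap\dom F$, a final application of $(\mathcal{H}_2)$ to the triples $\big(z_\alpha^*,u_\alpha,\bar\alpha_x-r_\alpha+\la x_\alpha^*,x\ra\big)$, whose last entry tends to $\bar\alpha_x-r+\la x^*,x\ra$, yields $(\bar z^*\circ G_{\bar u})(x)\ge\bar\alpha_x-r+\la x^*,x\ra$ for all $x\in C\cap\dom F$. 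Reading Lemma \ref{lem_2.2zba} backwards and then \eqref{eq111}, this says precisely that $a=(L_0+k\cdot x^*,\,y_0+rk)\in\epi(F+I_C+(k\cdot\bar z^*)\circ G_{\bar u})^*\subset\A_k$, which completes the proof.

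I expect the uniform bound on the multipliers $\|z_\alpha^*\|$ to be the main obstacle. The delicate point is that the scalarized upper semicontinuity of $u\mapsto G_u(\bar x)$ alone (positive $S$-upper semicontinuity, as in Lemma \ref{rem_5.1zzz}(i)) does not control $\la z_\alpha^*,G_{u_\alpha}(\bar x)-G_{\bar u}(\bar x)\ra$ while $z_\alpha^*$ varies; one genuinely needs the full $S^+$-uniform upper semicontinuity of $(\mathcal{H}_2)$ together with the weak*-compactness of $S^+\cap B_{Z^*}$ --- hence the hypothesis that $Z$ is normed --- to upgrade to the set-distance statement $d(G_{u_\alpha}(\bar x)-G_{\bar u}(\bar x),-S)\to0$.
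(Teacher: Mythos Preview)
Your argument is correct and shares the paper's scaffolding: scalarize the epigraph condition via Lemma~\ref{lem_2.2zba} to $(z_\alpha^*\circ G_{u_\alpha})(x)\ge\bar\alpha_x-r_\alpha+\la x_\alpha^*,x\ra$, bound the multipliers, pass to subnets, and close with $(\mathcal{H}_2)$. The genuine difference is the boundedness step for $(\|z_\alpha^*\|)$. The paper argues by contradiction: assuming $\|z_\alpha^*\|\to\infty$, it normalizes to $\tilde z_\alpha^*:=z_\alpha^*/\|z_\alpha^*\|$, extracts a weak* cluster point $\tilde z^*$, passes to the limit in the rescaled inequality via $(\mathcal{H}_2)$ to get $(\tilde z^*\circ G_{\bar u})(x)\ge 0$ for every $x\in C\cap\dom F$, and then asserts $\tilde z^*\ne 0$ to contradict $(C_0)$. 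Your route is direct: you invoke $(C_0)$ only at the single limit parameter $\bar u$, convert the Slater point into the coercivity estimate $\|z^*\|\le c^{-1}\la z^*,-G_{\bar u}(\bar x)\ra$ on $S^+$, and feed it with $\la z_\alpha^*,-G_{\bar u}(\bar x)\ra\le M+\varepsilon_\alpha\|z_\alpha^*\|$, obtained from the auxiliary fact $d\big(G_{u_\alpha}(\bar x)-G_{\bar u}(\bar x),-S\big)\to 0$ (itself a clean consequence of $(\mathcal{H}_2)$ and Banach--Alaoglu on $S^+\cap B_{Z^*}$). The paper's path is shorter, but its claim that the weak* limit of a norm-one net in $S^+$ must be nonzero is delicate in infinite dimensions; your approach sidesteps that issue entirely at the cost of the distance-and-decomposition detour. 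One small remark: your parenthetical ``reading each $G_u$ as $Z$-valued, as in $(\mathcal{H}_0)$'' imports a hypothesis not stated in the proposition; it is harmless, since every subsequent use of this result in the paper is coupled with $(\mathcal{H}_0)$.
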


\begin{proof}
Take arbitrarily $k_0\in \inte K$ and $(L_0,y_0)\in \L(X,Y)\times Y$, we will prove that  $\A_{k_0}\cap (E_{k_0}+(L_0,y_0))$ is closed,  where $E_{k_0}=k_0\cdot (X^*\times \RR)$ (defined in  \eqref{Ek}).
For this, take $(L_\alpha, y_\alpha)_{\alpha\in D}\subset \A_{k_0}\cap (E_{k_0}+(L_0,y_0))$ such that $(L_\alpha, y_\alpha)\to (L,y)$, we need to show that $(L,y)\in \A_{k_0}\cap (E_{k_0}+(L_0,y_0))$.



$\bullet$ Firstly, for all $\alpha\in D$,  as $(L_\alpha, y_\alpha)\in \A_{k_0}\cap (E_{k_0}+(L_0,y_0))$, there exist $(z^*_\alpha,u_\alpha)\in S^+\times \U$ and $(x^*_\alpha,r_\alpha)\in X^*\times \mathbb{R}$ such that
$(L_\alpha, y_\alpha)\in \epi (F+I_C+(k_0\cdot z^*_\alpha)\circ G_{u_\alpha})^*$ and $(L_\alpha,y_\alpha)=k_0(x^*_\alpha, r_\alpha)+(L_0,y_0)$. Then,  by  \eqref{eq111}, it holds
\begin{equation*}
y_0+r_\alpha k_0 -L_0(x) -\langle x^*_\alpha, x\rangle k_0 + F(x)+ (z^*_\alpha\circ G_{u_\alpha})(x) k_0\notin  -\inte K,\; \forall x\in C\cap\dom F,
\end{equation*}
or equivalently, 
 \begin{equation}\label{eq_36aab}
y_0  -L_0(x) \!   +\! F(x) \!    +\!  \Big(\!  r_\alpha  -\langle x^*_\alpha, x\rangle  + (z^*_\alpha\circ G_{u_\alpha})(x)\Big) \! k_0\notin  -\inte K, \forall x\in C\cap\dom F,
\end{equation}

$\bullet$ Next, for each $x\in C\cap\dom F$, according to Lemma \ref{lem_2.2zba}, there is $\alpha_x\in \mathbb{R}$ such that 
\begin{equation}
\label{eq_36aaa}
\alpha<\alpha_x \; \Longleftrightarrow\; y_0-L_0(x)+F(x)+\alpha k_0\in-\inte K.
\end{equation}
It follows from \eqref{eq_36aaa} and    \eqref{eq_36aab} that 
\begin{equation}\label{eq_37aaa}
r_\alpha-\langle x^*_\alpha, x\rangle+( z^*_\alpha\circ G_{u_\alpha})(x)  \ge \alpha_x, \  \forall x \in C\cap\dom F.  
\end{equation}


$\bullet$ {\it We now prove that the net $(\lVert z^*_\alpha\rVert)_{\alpha\in D}$ is bounded, where $\lVert z^*_\alpha\rVert:=\sup_{\lVert z\rVert \le 1}\langle z^*_\alpha,z\rangle$.} 
Let assume by contradiction that $\lVert z^*_\alpha\rVert\to +\infty$. Without loss of generality  we can assume that $\lVert z^*_\alpha\rVert>0$ for all $\alpha\in D$, and hence, according to \eqref{eq_37aaa}, 
\begin{equation}\label{eq_37aaabb}
 \left(\tilde z^*_\alpha\circ G_{u_\alpha}\right)(x)  \ge \frac{1}{\lVert z^*_\alpha\rVert} (\alpha_x+ \langle x^*_\alpha,x \rangle-r_\alpha),\; \forall x\in C\cap\dom F, \;\forall \alpha\in D,
\end{equation}
 where $\tilde z^*_\alpha=\frac{1}{\lVert z^*_\alpha\rVert}z^*_\alpha$.

Note that $(L_\alpha, y_\alpha)  =k_0(x^*_\alpha, r_\alpha)+(L_0,y_0)   \to (L,y)$. We now prove that  there is $(\bar x^*,\bar r)\in X^*\times \mathbb{R}$ such that
\begin{equation}\label{eq_5.8zba}
 ( x^*_\alpha, r_\alpha)\to (\bar x^*,\bar r) \textrm{ and }(L,y)=k_0(\bar x^*, \bar r)+(L_0,y_0).
\end{equation}
 Indeed, take $\bar y^*\in Y^*$ such that $\bar y^*(k_0)=1$ (it is possible as $k_0\ne 0_Y$). As $(L_\alpha, y_\alpha)  =k_0(x^*_\alpha, r_\alpha)+(L_0,y_0)   \to (L,y)$, one has $k_0 (x^*_\alpha, r_\alpha) \to (L - L_0, y - y_0)$ or equivalently, $k_0\cdot x_\alpha^* \to L - L_0$ and $r_\alpha k_0  \to y - y_0$. 
 Apply $\bar y^* $ to these expressions, one gets 
\begin{eqnarray*} 
&&  \bar y^* \circ (k_0\cdot x_\alpha^*) = (\bar y^* (k_0)) x^*_\alpha = x^*_\alpha  \overset{*}{\rightharpoonup} \bar y^* \circ (L - L_0) =: \bar x^* \\ 
&&  \bar y^* \circ (k_0 r_\alpha) = (\bar y^*(k_0) r_\alpha = r_\alpha  \to \bar y^*(y - y_0) = : \bar r , 
\end{eqnarray*}  
 which gives $(L_\alpha, y_\alpha)  =k_0(x^*_\alpha, r_\alpha)+(L_0,y_0)   \to k_0 (\bar x^*, \bar r) + (L_0, y_0)$ and \eqref{eq_5.8zba} follows by the uniqueness of the limit.

On the other hand, as $\lVert \tilde z^*_\alpha\rVert=1$ for all $\alpha\in D$ by  Banach-Alaoglu theorem, without loss of generality,  we can assume that  $\tilde z^*_\alpha \overset{*}{\rightharpoonup}  \tilde z^*\in Z^*$  and as $\U$ is compact we also can assume (without loss of generality) that $u_\alpha\to \bar u\in \U$.
So, pass to the limit (with $\alpha \in D$) in \eqref{eq_37aaabb},  taking into account that $\lVert z^*_\alpha\rVert\to +\infty$ and  that $u\mapsto G_u$ is $S^+$-uniformly usc for all $x\in C\cap\dom F$ (by $(\mathcal{H}_2)$), one gets
\begin{equation}\label{eq_5.16az}
(\tilde z^*\circ G_{\bar u})(x)\ge 0,\; \forall x\in C\cap \dom F.
\end{equation}

Next, as $\tilde z^*_\alpha\in S^+$, $\lVert \tilde z^*_\alpha\rVert=1$ for all $\alpha\in D$, and $\tilde z^*_\alpha\overset{*}{\rightharpoonup} \tilde z^*$, it holds $\tilde z^*\in S^+\setminus\{0_{Z^*}\}$. Consequently, $\tilde z^*(s)>0$ for all $s\in\inte S$ which, together with \eqref{eq_5.16az}, yields
$$G_{\bar u}(x)\notin -\inte S,\; \forall x\in C\cap \dom F.$$
This contradicts $(C_0)$, and hence,  the net $(\lVert z^*_\alpha\rVert)_{\alpha\in D}$ is bounded.

$\bullet$ As $(\lVert z^*_\alpha\rVert)_{\alpha\in D}$ is bounded, we can assume that $z^*_\alpha\overset{*}{\rightharpoonup} \bar z^*\in S^+$. For each $x\in C\cap\dom F$, pass to the limit in \eqref{eq_37aaa},  with the noting that $u_\alpha\to \bar u\in \U$, $r_\alpha\to \bar r$, $x^*_\alpha\overset{*}{\rightharpoonup} \bar x^*$, and that $u\mapsto G_u$ is $S^+$-uniformly usc, one gets
\begin{equation}\label{eq_37aaabbb}
\bar r-\langle \bar x^*, x\rangle+( \bar z^*\circ G_{\bar u})(x)  \ge \alpha_x.
\end{equation}
This, together with \eqref{eq_36aaa}, accounts for
$$y_0    -L_0(x) + F(x)   + \Big[  \bar r  -\langle \bar x^*, x\rangle +(\bar z^*\circ G_{\bar u})(x)\Big] k_0\notin -\inte K, $$

So, $(L,y)=k_0(\bar x^*, \bar r)+(L_0,y_0)\in \epi (F+I_C+(\bar z^*\circ G_{\bar u}))^*\subset \A_{k_0}$, and hence, $(L,y)\in \A_{k_0}\cap (E_{k_0}+(L_0,y_0))$. The proof is complete.
\end{proof}

\begin{proposition}\label{cor_5.1zzz}
Assume that $\U$ is a compact space, that $Z$ is {a normed space}, and that the  hypotheses $(\mathcal{H}_0)$, $(\mathcal{H}_1)$,   $(\mathcal{H}_2)$,  and the Slater-type condition $(C_0)$ hold.
Then the sets $\A_k$ (for any $k\in \inte K$), $\A$, $\B$ are  $k$-sectionally convex and closed.  
\end{proposition}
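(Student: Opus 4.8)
The plan is to combine the two preceding propositions with the asymptotic representation of $\epi(F+I_A)^*$ from Section~4, after observing that under all the stated hypotheses the sets $\A_k$, $\A$, $\B$ and $\epi(F+I_A)^*$ actually coincide. Fix $k\in\inte K$. First I would apply Proposition~\ref{pro_5.1}, which uses only $(\mathcal{H}_0)$ and $(\mathcal{H}_1)$, to obtain that $\A_k$ is $k$-sectionally convex; then I would apply Proposition~\ref{pro_5.1zzzz}, which uses the compactness of $\U$, the normability of $Z$, $(\mathcal{H}_2)$ and the Slater-type condition $(C_0)$, to obtain that $\A_k$ is $k$-sectionally closed. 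Hence, for every $(L,y)\in\L(X,Y)\times Y$, the section $\A_k\cap(E_k+(L,y))$ (with $E_k=k\cdot(X^*\times\RR)$ as in \eqref{Ek}) is at the same time convex and closed, so $\cl\co\big(\A_k\cap(E_k+(L,y))\big)=\A_k\cap(E_k+(L,y))$.

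Next I would feed this into the representation result. Since $(\mathcal{H}_0)$ holds, Corollary~\ref{rem_4.1zba} together with Proposition~\ref{pro_3.2zab}$(iii)$ gives $\epi(F+I_A)^*=\scl_{k}(\sco_{k}\A_k)=\sclco_{k}\A_k=\bigcup_{(L,y)}\cl\co\big(\A_k\cap(E_k+(L,y))\big)$, and by the previous step this union collapses to $\bigcup_{(L,y)}\big(\A_k\cap(E_k+(L,y))\big)=\A_k$. Thus $\epi(F+I_A)^*=\A_k$. Combining this with the inclusion chain $\A_k\subset\A\subset\B\subset\epi(F+I_A)^*$ of Proposition~\ref{epi_include}, I conclude that $\A_k=\A=\B=\epi(F+I_A)^*$.

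Finally, $\epi(F+I_A)^*$ is a (topologically) closed subset of $\L(X,Y)\times Y$ by Remark~\ref{rem_3.1_nw} (i.e.\ \cite[Lemma~3.6]{epi}), and it is $k$-sectionally convex by the first step; hence the common set $\A_k=\A=\B$ is closed and $k$-sectionally convex, that is, $\cl(\sco_{k}N)=N$ for $N\in\{\A_k,\A,\B\}$, which is exactly the claimed $k$-sectional convexity and closedness. There is no genuinely new difficulty here: all the analytic content --- the boundedness of $(\lVert z^*_\alpha\rVert)_{\alpha}$ via $(C_0)$ and Banach--Alaoglu, and the Lemma~\ref{pro_2.1aaa}-type manipulations behind $k$-sectional convexity --- has already been absorbed into Propositions~\ref{pro_5.1} and \ref{pro_5.1zzzz}; the only point requiring a little care is the upgrade from ``$k$-sectionally closed'' to ``closed'', which is achieved indirectly through the identification $\A_k=\epi(F+I_A)^*$ rather than by a direct argument.
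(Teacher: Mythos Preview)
Your proof is correct and follows essentially the same route as the paper's: invoke Propositions~\ref{pro_5.1} and~\ref{pro_5.1zzzz} to get $\A_k$ $k$-sectionally convex and closed, use Corollary~\ref{rem_4.1zba} to deduce $\epi(F+I_A)^*=\A_k$, apply the chain of Proposition~\ref{epi_include} to force $\A_k=\A=\B=\epi(F+I_A)^*$, and then read off topological closedness and $k$-sectional convexity from $\epi(F+I_A)^*$. The only cosmetic difference is that the paper cites Proposition~\ref{epirco} for the $k$-sectional convexity of $\epi(F+I_A)^*$, whereas you infer it from the already-established equality $\epi(F+I_A)^*=\A_k$; both are equally valid.
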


\begin{proof}
Take $k_0\in \inte K$. 
According to Corollary  \ref{rem_4.1zba},
$\epi(F+I_A)^*=\scl_{k_0}(\sco_{k_0} \A_{k_0}).$
Moreover, it follows from Propositions \ref{pro_5.1}, \ref{pro_5.1zzzz} that $\A_{k_0}$ is $k_0$-sectionally convex and $k_0$-sectionally closed, and hence, $\epi(F+I_A)^*=\A_{k_0}$.  
On the other hand, according to Proposition   \ref{epi_include},
$\epi(F+I_A)^*\supset \B\subset \A\supset \A_{k_0}$. So, $\epi (F+I_A)^*=\B=\A=\A_{k_0}$.
The conclusion now follows from this and the fact that $\epi (F+I_A)^*$ is closed and $k$-sectionally convex.
\end{proof}

Combining Theorems  \ref{thm_CSFI}-\ref{thm_FLIzzz}  and Propositions \ref{pro_5.1}-\ref{cor_5.1zzz}, we get the following version of stable  robust vector Farkas lemma  in    
convex setting. 

\begin{theorem}[Stable robust convex vector  Farkas lemma II]
\label{pro_5.3zba}
Assume that $\U$ is a compact space and $Z$ is {a normed space}. Assume further   that the  hypotheses $(\mathcal{H}_0)$, $(\mathcal{H}_1)$, $(\mathcal{H}_2)$,  and the Slater-type condition $(C_0)$ hold.

Then,  for all $\V\times\W\subset\L(X,Y)\times Y$,  the three versions of  $\V\times\W$-stable robust Farkas lemma  described in  $({\rm c})$ and  $({\rm d})$ in Theorem \ref{thm_CSFI}, and $({\rm c}_k)$ (for arbitrary $k\in \inte K$) in Theorem \ref{thm_FLIzzz}  hold.
\end{theorem}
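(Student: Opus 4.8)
The plan is to observe that, under the stated hypotheses, every "$k$-sectionally convex and closed regarding $\V\times\W$" qualifying condition appearing in Theorems~\ref{thm_CSFI} and \ref{thm_FLIzzz} is satisfied \emph{unconditionally}, i.e.\ for every choice of $\V$ and $\W$, and then to feed these conditions into the Farkas-type principles already established. All the analytic content has been front-loaded into Proposition~\ref{cor_5.1zzz} and the representation results of Section~4, so what remains is essentially bookkeeping.

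First I would invoke Proposition~\ref{cor_5.1zzz}: since $\U$ is compact, $Z$ is a normed space, and $(\mathcal{H}_0)$, $(\mathcal{H}_1)$, $(\mathcal{H}_2)$, $(C_0)$ all hold, that proposition yields the full set equalities
\[
\epi(F+I_A)^*=\B=\A=\A_k\qquad\text{for every }k\in\inte K.
\]
Intersecting each of these with an arbitrary $\V\times\W\subset\L(X,Y)\times Y$ gives $\epi(F+I_A)^*\cap(\V\times\W)=\A\cap(\V\times\W)=\B\cap(\V\times\W)=\A_k\cap(\V\times\W)$, which are exactly statements $({\rm a})$ and $({\rm b})$ of Theorem~\ref{thm_CSFI} and statement $({\rm a}_k)$ of Theorem~\ref{thm_CSFIbbis}. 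For the last one I would also note that, under $(\mathcal{H}_0)$, Theorem~\ref{thm_epi_presenting1} gives $\epi(F+I_A)^*=\cl(\sco_k\A_k)$, so $({\rm a}_k)$ and the condition $({\rm a}'_k)$ of Theorem~\ref{thm_FLIzzz} coincide and both reduce, after intersecting with $\V\times\W$, to the trivially true equality $\A_k\cap(\V\times\W)=\A_k\cap(\V\times\W)$.

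Then I would close the argument by citing the equivalences already in hand: $[({\rm a})\Leftrightarrow({\rm c})]$ and $[({\rm b})\Leftrightarrow({\rm d})]$ from Theorem~\ref{thm_CSFI} deliver $({\rm c})$ and $({\rm d})$, while $[({\rm a}_k)\Leftrightarrow({\rm c}_k)]$ from Theorem~\ref{thm_CSFIbbis} (equivalently Theorem~\ref{thm_FLIzzz}) delivers $({\rm c}_k)$ for the arbitrarily fixed $k\in\inte K$. Since $\V$ and $\W$ were arbitrary, this proves the three asserted versions of the stable robust convex vector Farkas lemma simultaneously.

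There is no genuine obstacle at this final stage; the only point requiring a moment's care is to stress that Proposition~\ref{cor_5.1zzz} produces \emph{unconditional} set equalities rather than merely the "regarding $\V\times\W$" relations, so that intersecting with $\V\times\W$ is automatic and no separate qualification condition needs to be verified. A minor subtlety worth flagging is that $k$ in $({\rm c}_k)$ may be chosen freely in $\inte K$, because the equality $\epi(F+I_A)^*=\A_k$ holds for \emph{every} such $k$, not just for one particular choice.
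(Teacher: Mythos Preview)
Your proof is correct and follows essentially the same route as the paper. The only cosmetic difference is that the paper cites Proposition~\ref{cor_5.1zzz} to verify the qualifying conditions $({\rm a}_1)$, $({\rm b}_1)$ of Theorem~\ref{thm_FLI} and $({\rm a}'_k)$ of Theorem~\ref{thm_FLIzzz}, whereas you go one layer deeper and feed the set equalities $\epi(F+I_A)^*=\B=\A=\A_k$ (established inside the proof of Proposition~\ref{cor_5.1zzz}) directly into the more primitive principles Theorems~\ref{thm_CSFI} and~\ref{thm_CSFIbbis}; since Theorems~\ref{thm_FLI} and~\ref{thm_FLIzzz} are themselves obtained by combining those principles with the representation results of Section~4, the two arguments are the same in substance.
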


\begin{proof}
Take $\V\times \W\in \L(X,Y)\times Y$ and $k\in \inte K$. It follows from Proposition  \ref{cor_5.1zzz} that the qualifying conditions $(\rm a_1)$ and $(\rm b_1)$  in Theorem \ref{thm_FLI}, and $({\rm c}_k)$ in Theorem \ref{thm_FLIzzz}  hold. The conclusion follows from Theorems \ref{thm_FLI}, \ref{thm_FLIzzz}.
\end{proof}

\section{Duality for robust convex vector optimization problems}

  In this section, concerning    the robust vector optimization problem (RVP) defined by   \eqref{rvop}   
with its feasible set $A$ as in \eqref{feas-A} with  the assumption  that  
$A\cap\dom F\ne \emptyset$,{we define a new kind of Lagrange dual problems $({\rm RVD}^k)$ based on some results $k$-sectional convexity with  $ k \in \inte K$}, along with the Lagrangian robust dual problem  $({\rm RVD})$  
and the weak Lagrangian robust dual problem  $({\rm RVD}_w)$ introduced in \cite{DL17VJM}.   
We   will establish several robust  strong stable  duality results for the pairs  (RVP)-$({\rm RVD})$,  (RVP)-$({\rm RVD}_w)$, and (RVP)-$({\rm RVD}^k)$. The results on robust  strong stable  duality  for the pair   (RVP)-$({\rm RVD}^k)$ are new while the ones for other dual pairs are established under  qualification conditions which are different from \cite{DL17VJM}  and  some how are easier to check than the ones in \cite{DL17VJM}.

Recall that   $\A$, $\B$ and $\A_{k}$ (for some $k \in \inte K$) the qualifying sets defined respectively  by  \eqref{A}, \eqref{B}, and \eqref{Ak}.

\subsection{Lagrange duality for robust vector optimization problems}

 We consider the {\it Lagrangian robust dual problem} $({\rm RVD})$ and the {\it weak Lagrangian robust dual problem}     $({\rm RVD}_w)$  \cite{DL17VJM} of $\mathrm{(RVP)}$ defined   respectively by 
\begin{align*}
	&{({\rm RVD})}\quad\  \mathop{\wsup}\limits_{(T, u)\in    \L_+(S,K)\times \U} 
		\mathop{\winf}_{x\in C} (F+ T\circ G_u)(x)   ,\\
	&({\rm RVD}_w) \quad \mathop{\wsup}\limits_{(T,u)\in    \L^w_+(S,K)\times \U}  
		\mathop{\winf}_{(x,s)\in C\times S} \big[(F+ T\circ G_u)(x) + T(s)\big].
\end{align*}
 We say that {\it the robust strong   duality holds for the pair ${({\rm RVP})-({\rm RVD})}$} (resp., {\it for the pair ${({\rm RVP})-({\rm RVD}w)}$})  if  the sets of values of the two problems $({\rm RVP})$ and $({\rm RVD})$ (resp.,  of the two problems $({\rm RVP})$ and $({\rm RVD}_w)$) are equal together, that is, 
 \begin{equation} \label{eq_5.1}
	\wmax ({\rm RVD})  = \winf {\rm (RVP)}\qquad (\textrm{resp., } \wmax ({\rm RVD}_w)  = \winf {\rm (RVP)}) . 
\end{equation}

For $L\in \L(X,Y)$, we denote by  $({\rm RVP}^L)$ the  perturbed vector problem
 \begin{align}
\label{rvp_L}
({\rm RVP}^L)\qquad & \wmin\left\{ F(x)-L(x): x\in C,\; G_u(x)\in -S,\;\forall u\in\mathcal{U}\right\} .
\end{align}
Then, the {\it Lagrangian robust dual problem} and the {\it weak Lagrangian robust dual problem}  of $(\mathrm{RVP}^L)$ are,  respectively, 
\begin{align*}
	&({\rm RVD}^L)\quad\  \mathop{\wsup}\limits_{(T, u)\in    \L_+(S,K)\times \U} 
		\mathop{\winf}_{x\in C} (F-L+ T\circ G_u)(x)   ,\\
	&({\rm RVD}_w^L) \quad \mathop{\wsup}\limits_{(T,u)\in    \L^w_+(S,K)\times \U}  
		\mathop{\winf}_{(x,s)\in C\times S} \big[(F-L+ T\circ G_u)(x) + T(s)\big].
\end{align*}

Let $\emptyset\ne\V\subset \L(X,Y)$. We say that the \emph{robust strong $\V$-stable duality holds for the pair $({\rm RVP})-({\rm RVD})$}  if, for any $L\in \V$,
\begin{equation}
	\label{eq_dual}
	\wmax ({\rm RVD}^L)  = \winf ({\rm RVP}^L).
\end{equation}
When $\V = \L (X, Y)$ we will say that the \emph{robust strong  stable duality holds for the pair $({\rm RVP})-({\rm RVD})$} instead of ``the \emph{robust strong $\L(X,Y)$-stable duality holds for the pair $({\rm RVP})-({\rm RVD})$}''. 
It is obviously  that when $\V=\{0_\L\}$, the concept ``robust strong $\V$-stable duality'' reduces to the concept ``robust strong duality''. For the pair $({\rm RVP})-({\rm RVD}_w)$, the corresponding concepts (for instance, \emph{robust strong $\V$-stable duality holds for the pair $({\rm RVP})-({\rm RVD}_w)$}) will  be defined in the same way.  

We first introduce  the following {principles} of robust strong $\V$-stable duality.

\begin{theorem}[Principles of  robust  strong $\V$-stable duality I] 
	\label{thm_SD1} 
	Consider the following statements:
	\begin{itemize}[nosep]
		\item[$({\rm e})$] $\epi(F+I_A)^*\cap (\V\times Y)= \A \cap (\V\times Y)$, 

		\item[$({\rm f})$] $\epi(F+I_A)^*\cap (\V\times Y)= \B \cap (\V\times Y)$, 

		\item[$({\rm g})$] The robust strong $\V$-stable duality holds for the pair ${\rm (RVP)-(RVD)},$

		\item[$({\rm h})$] The robust strong $\V$-stable duality holds for the pair $({\rm RVP})-({\rm RVD}_w).$
	\end{itemize}
	Then, one has $[({\rm e})\Longleftrightarrow({\rm g})]$ and  $[({\rm f})\Longleftrightarrow({\rm h})]$.
\end{theorem}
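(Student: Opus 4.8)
The plan is to prove both equivalences at the purely formal level, with no use of convexity, by translating the set identities in (e) and (f) into statements comparing the weakly infimal value of the perturbed primal $({\rm RVP}^{L})$ with the weakly maximal value of its dual.

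First I would record the value dictionary. For every $L\in\L(X,Y)$, the identities $F^{\ast}(L)=\wsup\{L(x)-F(x):x\in X\}$ and $\winf M=-\wsup(-M)$ give $\winf({\rm RVP}^{L})=-(F+I_{A})^{\ast}(L)$ and $\winf_{x\in C}(F-L+T\circ G_{u})(x)=-(F+I_{C}+T\circ G_{u})^{\ast}(L)$; for the weak dual one also needs $\winf_{s\in S}T(s)=-I^{\ast}_{-S}(T)$ --- which is precisely why the shifts $(0_{\L},v)$, $v\in I^{\ast}_{-S}(T)$, appear in the definition \eqref{B} of $\B$ --- together with Lemma \ref{lemextra2}(d) to split the inner infimum over $(x,s)\in C\times S$ into a sum. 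Inserting this into the characterisation \eqref{eq111}, the section of $\epi(F+I_{A})^{\ast}$ above a fixed $L$ (a subset of $Y$) equals $K-\winf({\rm RVP}^{L})$, the section of $\A$ above $L$ equals $K-D(L)$, and the section of $\B$ above $L$ equals $K-D_{w}(L)$, where $D(L):=\bigcup_{(T,u)\in\L_{+}(S,K)\times\U}\winf_{x\in C}(F-L+T\circ G_{u})(x)$ is the set of all values of $({\rm RVD}^{L})$ and $D_{w}(L)$ is, likewise, the set of all values of $({\rm RVD}_{w}^{L})$.

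Because $\V\times Y=\bigcup_{L\in\V}(\{L\}\times Y)$ is a disjoint union of sections, (e) is equivalent to the assertion that for every $L\in\V$ the $L$-sections of $\epi(F+I_{A})^{\ast}$ and $\A$ coincide, and (f) is the corresponding $L$-section statement for $\B$; so everything reduces to a single-$L$ equivalence, which via the dictionary reads $K-\winf({\rm RVP}^{L})=K-D(L)\iff\wmax D(L)=\winf({\rm RVP}^{L})$, with $\wmax D(L):=D(L)\cap\wsup D(L)$. Here weak duality is free: the inclusion $\A\subset\epi(F+I_{A})^{\ast}$ of Proposition \ref{epi_include} is exactly $K-D(L)\subset K-\winf({\rm RVP}^{L})$. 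For ``$\Leftarrow$'', strong duality gives $\winf({\rm RVP}^{L})=\wmax D(L)\subset D(L)$, hence $K-\winf({\rm RVP}^{L})\subset K-D(L)$, and combined with weak duality the two sections agree. For ``$\Rightarrow$'', section-equality yields $\winf({\rm RVP}^{L})\subset K-D(L)$, i.e.\ every weakly infimal value of $({\rm RVP}^{L})$ lies $\leqq_{K}$ above some value of $({\rm RVD}^{L})$; combining this with weak duality and the decomposition of $Y$ from Lemma \ref{lemextra2}(c) forces the dual optimal value $\wsup D(L)$ to equal $\winf({\rm RVP}^{L})$ and to be attained, i.e.\ $\wmax D(L)=\winf({\rm RVP}^{L})$. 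The equivalence $(f)\Leftrightarrow(h)$ is then verbatim the same, after replacing $\L_{+}(S,K)$ by $\L^{w}_{+}(S,K)$, $\A$ by $\B$, $D(L)$ by $D_{w}(L)$, and carrying along the ``$+\,T(s)$, $s\in S$'' term (equivalently the intersection over $v\in I^{\ast}_{-S}(T)$) via $\winf_{s\in S}T(s)=-I^{\ast}_{-S}(T)$.

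I expect the main obstacle to be exactly this single-$L$ equivalence, and inside it the bookkeeping of the degenerate cases together with the attainment issue. When the primal is unbounded below, $\winf({\rm RVP}^{L})=\{-\infty_{Y}\}$ --- equivalently $L\notin\dom(F+I_{A})^{\ast}$ and the $L$-section of $\epi(F+I_{A})^{\ast}$ is empty --- and one must check that section-equality then forces every value of $({\rm RVD}^{L})$ to be $\{-\infty_{Y}\}$, so that both sides of the duality identity are $\{-\infty_{Y}\}$ under the conventions $\wsup\emptyset=\wsup\{-\infty_{Y}\}=\{-\infty_{Y}\}$. The second delicate point is that the set equalities in (e) and (f) encode the \emph{attainment} of the weak supremum in the dual ($\wmax D(L)$, not just $\wsup D(L)$): this is why the argument needs $\A$, resp.\ $\B$, themselves and not their $k$-sectionally convex closed hulls, in contrast with the asymptotic representations of Theorems \ref{thm_epi_presenting1}--\ref{thm_epi-presenting2}.
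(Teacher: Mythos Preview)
Your proposal is correct and is, in substance, the same argument as the paper's. The paper's proof is terser because it routes $({\rm e})\Rightarrow({\rm g})$ through the Farkas principle of Theorem~\ref{thm_CSFI} and then cites \cite[Theorem~6]{DL17VJM} for the Farkas-to-duality step, while for $({\rm g})\Rightarrow({\rm e})$ it uses Proposition~\ref{epi_include} for one inclusion and the argument of \cite[proof of~(51)]{DL17VJM} for the other; your section-wise value dictionary, the identification of the $L$-sections with $K-\winf({\rm RVP}^{L})$ and $K-D(L)$ (resp.\ $K-D_{w}(L)$), and the $\wsup/\wmax$ bookkeeping via Lemma~\ref{lemextra2}(c),(d) are precisely what those cited passages carry out.
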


\begin{proof}
 {\it Proof of $[({\rm e}) \Longrightarrow ({\rm g})]$.} Take $L\in \V$, we will prove that $\winf ({\rm RVP}^L)=\wmax ({\rm RVD}^L)$.
Firstly, it is worth noting that the problems $({\rm RVP}^L)$ and $ ({\rm RVD}^L)$ are respectively nothing else but the problems  $({\rm RVP})$ and $ ({\rm RVD})$ with $F$ replaced by $F-L$. As $({\rm e})$ holds, according to Theorem \ref{thm_CSFI}, it holds,  for all $y\in Y$, 
\begin{gather*}
\Big(G_u(x)\in -S,\;  x\in C, \; \forall u\in \mathcal{U}\; 
				\Longrightarrow \; y-L(x)+F(x)\notin -\inte K\Big)\\
\Updownarrow\\
\Big(\exists u\in\U,\; \exists T\in \L_+(S,K): F(x)+T\circ G_u(x)-L(x)+y\notin -\inte K,\;\forall x\in C\Big), 
\end{gather*}
or in  other words, \cite[Theorem 1 $(ii)$]{DL17VJM} holds with $F-L$ replacing $F$. Repeat whole the proof of $[({\rm a}) \Longrightarrow ({\rm b})]$ of \cite[Theorem 6]{DL17VJM}, we obtain $\winf ({\rm RVP}^L)=\wmax ({\rm RVD}^L)$.

 {\it Proof of $[({\rm e}) \Longleftarrow ({\rm g})]$.}
It follows from Proposition \ref{epi_include} that
\begin{equation}\label{eq_6.4bisbis}
 \A\cap (\V\times \W)\subset \epi (F+I_A)^*\cap (\V\times \W).
\end{equation}
So, to prove $({\rm e})$ holds, it is sufficient to check that the converse  inclusion of \eqref{eq_6.4bisbis} holds.
Take $(L,y)\in \epi (F+I_A)^*\cap (\V\times \W)$. Then, according to \eqref{eq111},
$$y+F(x)-L(x)\notin -\inte K,\; \forall x\in A\cap \dom F.$$
Use the same argument as in the proof of (51) of \cite{DL17VJM}  (page 312) with $F-L$ replacing $F$, one gets the existence of $(T,u)\in \L_+(S,K)\times \U$ such that $(L,y)\in \epi (F+I_C+T\circ G_u)^*\subset \A$. So, the  converse   inclusion of \eqref{eq_6.4bisbis} holds. 

 {\it Proof of $[({\rm f}) \Longleftrightarrow ({\rm h})]$.} The proof is similar to the one of $[({\rm e}) \Longleftrightarrow ({\rm g})]$. 
 \end{proof}

We now turn to  the convex case i.e., the case where $(\mathcal{H}_0)$ holds. In such a case, with the help of the results  established in Section 4, the qualifying conditions in (e) and (f) can be described in terms of sectional convexity and closedness.

\begin{theorem}[Principles of  convex robust  strong $\V$-stable duality I]
	\label{thm_SD2}
Assume that $(\mathcal{H}_0)$ holds. 	Consider the following statements:
 	\begin{itemize}[nosep]
		\item[$({\rm e}_1)$] $ \exists k\in \inte K$ s.t.  $\A$ is $k$-sectionally 
			convex and closed regarding $\V\times Y$,

		\item[$({\rm f}_1)$]   $\exists k\in \inte K$ s.t.  $\B$ is $k$-sectionally 
			convex  and closed regarding $\V\times Y$.
	\end{itemize}
	Then, one has $[({\rm e}_1)\Longleftrightarrow({\rm g})]$ and  $[({\rm f}_1)\Longleftrightarrow({\rm h})]$, {where $({\rm g})$ and $({\rm h})$ are in Theorem \ref{thm_SD1}}. 
\end{theorem}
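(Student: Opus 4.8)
The plan is to deduce Theorem~\ref{thm_SD2} from Theorem~\ref{thm_SD1} by feeding in the asymptotic representations of $\epi(F+I_A)^*$ obtained in Section~4; the argument will run parallel to the proof of Theorem~\ref{thm_FLI}. Since Theorem~\ref{thm_SD1} already provides $[({\rm e})\Leftrightarrow({\rm g})]$ and $[({\rm f})\Leftrightarrow({\rm h})]$, it suffices to show that, under $(\mathcal{H}_0)$, the sectional qualifying condition $({\rm e}_1)$ is equivalent to $({\rm e})$ and $({\rm f}_1)$ is equivalent to $({\rm f})$; the rest is a chain of equivalences.

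The key step is the following. Fix an arbitrary $k\in\inte K$ (such $k$ exists because $K$ has nonempty interior) and invoke Theorem~\ref{thm_epi-presenting2}: under $(\mathcal{H}_0)$ one has $\epi(F+I_A)^*=\cl(\sco_k\A)=\cl(\sco_k\B)$. By definition, ``$\A$ is $k$-sectionally convex and closed regarding $\V\times Y$'' means $\cl(\sco_k\A)\cap(\V\times Y)=\A\cap(\V\times Y)$; combining this with the identity $\cl(\sco_k\A)=\epi(F+I_A)^*$ shows that, for each single $k\in\inte K$, this condition is equivalent to $\epi(F+I_A)^*\cap(\V\times Y)=\A\cap(\V\times Y)$, which is exactly $({\rm e})$. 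The same reasoning with $\cl(\sco_k\B)=\epi(F+I_A)^*$ gives that ``$\B$ is $k$-sectionally convex and closed regarding $\V\times Y$'' is equivalent to $({\rm f})$ for each $k\in\inte K$.

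Finally I would assemble the pieces: since the preceding equivalence holds for every $k\in\inte K$, the existential statement $({\rm e}_1)$ (``$\exists k\in\inte K$ \dots'') holds if and only if $({\rm e})$ holds, and likewise $({\rm f}_1)\Leftrightarrow({\rm f})$; combining with Theorem~\ref{thm_SD1} yields $[({\rm e}_1)\Leftrightarrow({\rm e})\Leftrightarrow({\rm g})]$ and $[({\rm f}_1)\Leftrightarrow({\rm f})\Leftrightarrow({\rm h})]$, as claimed. There is no real obstacle here, only one point deserving explicit mention: because Theorem~\ref{thm_epi-presenting2} delivers the \emph{same} set $\epi(F+I_A)^*$ for \emph{every} $k\in\inte K$, the existential quantifier on $k$ in $({\rm e}_1)$ and $({\rm f}_1)$ is in fact interchangeable with a universal one, and both collapse to the $k$-free conditions $({\rm e})$ and $({\rm f})$. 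This should be stated rather than left implicit; no new constructions or estimates are required.
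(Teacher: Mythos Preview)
Your proposal is correct and follows essentially the same route as the paper's proof: invoke Theorem~\ref{thm_epi-presenting2} to obtain $\epi(F+I_A)^*=\cl(\sco_k\A)=\cl(\sco_k\B)$ for every $k\in\inte K$, use the definition of ``$k$-sectionally convex and closed regarding $\V\times Y$'' to translate $({\rm e}_1)$ and $({\rm f}_1)$ into $({\rm e})$ and $({\rm f})$, and then apply Theorem~\ref{thm_SD1}. Your explicit remark that the existential quantifier on $k$ collapses (because the right-hand side $\epi(F+I_A)^*$ is independent of $k$) is a welcome clarification that the paper leaves implicit.
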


\begin{proof}  As $(\mathcal{H}_0)$ holds, it follows from Theorem 
	\ref{thm_epi-presenting2} that 
	$$ \epi (F+I_A)^*= \cl(\sco\nolimits_k\A)=    \cl(\sco\nolimits_k\B),\; \forall k\in \inte K $$
	So, the statements $({\rm e}_1)$ and $({\rm f}_1)$ are equivalent to
		$\epi(F+I_A)^{\ast }\cap (\V\times Y)=\A\cap (\V\times Y)$
 and  
$\epi(F+I_A)^{\ast }\cap (\V\times Y)=\B\cap (\V\times Y)$,
respectively.	The conclusion now follows from Theorem \ref{thm_SD1}.  
\end{proof}

\begin{theorem}[Convex robust  strong $\V$-stable duality]
	\label{thm_SD2bb}
Assume that $(\mathcal{H}_0)$ and   the following condition hold:
 	\begin{itemize}[nosep]
		\item[$({\rm e}_2)$]  $ \exists k\in \inte K$ s.t.   {$\A_k$} is $k$-sectionally 
			convex  and  closed regarding $\V\times Y$.

	\end{itemize}
	Then, strong robust $\V$-stable duality holds for   ${\rm (RVP)-(RVD)}$ and $({\rm RVP})-({\rm RVD}_w).$
\end{theorem}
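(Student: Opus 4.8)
The plan is to obtain Theorem~\ref{thm_SD2bb} as an immediate consequence of the duality principle in Theorem~\ref{thm_SD1} combined with the asymptotic representation of $\epi(F+I_A)^\ast$ proved in Section~4, so that the whole argument reduces to a short chain of set inclusions. First I would fix the cone element $k\in\inte K$ furnished by the hypothesis $({\rm e}_2)$ and invoke Theorem~\ref{thm_epi_presenting1}: since $(\mathcal{H}_0)$ holds, $\epi(F+I_A)^\ast=\cl(\sco_k\A_k)$. Intersecting with $\V\times Y$ and unfolding the definition of $({\rm e}_2)$ (namely $\cl(\sco_k\A_k)\cap(\V\times Y)=\A_k\cap(\V\times Y)$), one gets
$$\epi(F+I_A)^\ast\cap(\V\times Y)=\cl(\sco_k\A_k)\cap(\V\times Y)=\A_k\cap(\V\times Y).$$

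Next I would bring in Proposition~\ref{epi_include}, which supplies $\epi(F+I_A)^\ast\supset\B\supset\A\supset\A_k$. Intersecting each term with $\V\times Y$ and combining with the equality just displayed yields the sandwich
$$\A_k\cap(\V\times Y)\subset\A\cap(\V\times Y)\subset\B\cap(\V\times Y)\subset\epi(F+I_A)^\ast\cap(\V\times Y)=\A_k\cap(\V\times Y),$$
which forces all four sets to coincide. In particular this establishes statements $({\rm e})$ and $({\rm f})$ of Theorem~\ref{thm_SD1}, namely $\epi(F+I_A)^\ast\cap(\V\times Y)=\A\cap(\V\times Y)$ and $\epi(F+I_A)^\ast\cap(\V\times Y)=\B\cap(\V\times Y)$.

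Finally I would apply Theorem~\ref{thm_SD1}: the equivalence $[({\rm e})\Longleftrightarrow({\rm g})]$ gives robust strong $\V$-stable duality for the pair $({\rm RVP})-({\rm RVD})$, and $[({\rm f})\Longleftrightarrow({\rm h})]$ gives it for the pair $({\rm RVP})-({\rm RVD}_w)$, which is exactly the assertion. I do not expect any genuine obstacle here; the only point requiring care is that $({\rm e}_2)$ is phrased in terms of $\A_k$ whereas $({\rm e})$ and $({\rm f})$ are phrased in terms of $\A$ and $\B$, and it is precisely the representation $\epi(F+I_A)^\ast=\cl(\sco_k\A_k)$ of Theorem~\ref{thm_epi_presenting1}, together with the inclusion chain of Proposition~\ref{epi_include}, that reconciles these formulations. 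As a side remark, specializing to $\V=\{0_\L\}$ recovers the (non-stable) robust strong duality for both dual pairs.
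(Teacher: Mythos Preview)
Your proposal is correct and follows essentially the same approach as the paper: the paper's proof simply invokes the argument of Theorem~\ref{thm_5.3} with $\W=Y$ to obtain $({\rm e})$ and $({\rm f})$ of Theorem~\ref{thm_SD1}, and that argument is precisely the sandwich between the inclusion chain of Proposition~\ref{epi_include} and the representation $\epi(F+I_A)^\ast=\cl(\sco_k\A_k)$ from Theorem~\ref{thm_epi_presenting1} that you spell out.
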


\begin{proof}  Use the same argument as in the proof of Theorem \ref{thm_5.3} (with $\W=Y$)
we can show that if  $({\rm e}_2)$ holds then $(\rm e)$ and $(\rm f)$ in Theorem \ref{thm_SD1} hold  and then, 
the conclusion follows from  Theorem \ref{thm_SD1}.
\end{proof}

\begin{theorem}[Convex robust  strong stable duality I]
\label{thm_SD2bis}
Assume that $\U$ is a compact space, that $Z$ is a normed space, and that  the  hypotheses $(\mathcal{H}_0)$, $(\mathcal{H}_1)$, $(\mathcal{H}_2)$,  and  the Slater-type condition $(C_0)$ hold. Then  robust strong stable duality holds for two pairs  ${\rm (RVP)-(RVD)}$ and $({\rm RVP})-({\rm RVD}_w).$
\end{theorem}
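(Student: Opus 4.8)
The plan is to obtain this as an immediate consequence of Proposition \ref{cor_5.1zzz} together with the duality principles already established in this section. Since $\inte K\neq\emptyset$, fix some $k\in\inte K$. Under the present hypotheses ($\U$ compact, $Z$ normed, $(\mathcal{H}_0)$, $(\mathcal{H}_1)$, $(\mathcal{H}_2)$, and the Slater-type condition $(C_0)$), Proposition \ref{cor_5.1zzz} applies and yields not merely that $\A_k$, $\A$, $\B$ are $k$-sectionally convex and $k$-sectionally closed, but the stronger identities
\[
\epi(F+I_A)^* = \A = \B = \A_k .
\]
In particular $\A$ and $\B$ are genuinely closed subsets of $\L(X,Y)\times Y$, not merely $k$-sectionally closed, so $\cl(\sco_k\A)=\A$ and $\cl(\sco_k\B)=\B$.

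Next I would read off the qualifying conditions required by the $\V$-stable duality principles, taking $\V=\L(X,Y)$. Then $\V\times Y = \L(X,Y)\times Y$, so the equalities $\epi(F+I_A)^*\cap(\V\times Y)=\A\cap(\V\times Y)$ and $\epi(F+I_A)^*\cap(\V\times Y)=\B\cap(\V\times Y)$ hold trivially (both sides equal $\epi(F+I_A)^*$ by the displayed identities), i.e.\ statements $({\rm e})$ and $({\rm f})$ of Theorem \ref{thm_SD1} hold for $\V=\L(X,Y)$. Equivalently, in the convex setting one may note that $({\rm e}_1)$ and $({\rm f}_1)$ of Theorem \ref{thm_SD2} — or $({\rm e}_2)$ of Theorem \ref{thm_SD2bb} — are satisfied for $\V=\L(X,Y)$, since $\A$, $\B$, $\A_k$ are $k$-sectionally convex and closed regarding $\L(X,Y)\times Y$.

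Then I would invoke Theorem \ref{thm_SD1}: the equivalence $[({\rm e})\Leftrightarrow({\rm g})]$ gives that the robust strong $\L(X,Y)$-stable duality holds for the pair ${\rm (RVP)-(RVD)}$, and $[({\rm f})\Leftrightarrow({\rm h})]$ gives the same for the pair $({\rm RVP})-({\rm RVD}_w)$. By the convention fixed earlier in Section 6, ``robust strong $\L(X,Y)$-stable duality'' is precisely ``robust strong stable duality'', which is the assertion of the theorem. (One could equally finish via Theorem \ref{thm_SD2bb} applied with $\V=\L(X,Y)$.)

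I do not expect any genuine obstacle here: all the analytic work — the uniform $S^+$-concavity and $S^+$-uniform upper semicontinuity machinery of Section 3, the Slater-type boundedness argument in Proposition \ref{pro_5.1zzzz}, and the representation $\epi(F+I_A)^*=\scl_k(\sco_k\A_k)$ from Corollary \ref{rem_4.1zba} — has already been carried out, and this statement is the corollary that packages it for the global (non-stable-parametrized, i.e.\ $\V=\L(X,Y)$) duality. The only point requiring care is the bookkeeping observation that ``$\A$ is $k$-sectionally convex and closed'' is being used in the strong form $\cl(\sco_k\A)=\A$, which is legitimate precisely because $\A$ coincides with the closed set $\epi(F+I_A)^*$; this is what lets the condition ``regarding $\V\times Y$'' be verified for the full space $\V=\L(X,Y)$.
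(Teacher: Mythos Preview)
Your proposal is correct and follows essentially the same route as the paper: invoke Proposition \ref{cor_5.1zzz} (whose proof indeed establishes the identities $\epi(F+I_A)^*=\A=\B=\A_k$) and then apply the $\V$-stable duality principles with $\V=\L(X,Y)$. The paper's one-line proof cites Theorem \ref{thm_SD2} rather than Theorem \ref{thm_SD1}, but as you yourself note these are interchangeable here.
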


\begin{proof} It follows from  Proposition \ref{cor_5.1zzz} and Theorem \ref{thm_SD2}.  
\end{proof}

\begin{remark}
It worth observing  that the hypotheses $(\mathcal{H}_1)$, $(\mathcal{H}_2)$ and the condition $(C_0)$ do not concern  the objective mapping $F$. So, the conclusion of Theorem \ref{thm_SD2bis} until holds true when $F$ is replaced by arbitrary proper $K$-convex and positively $K$-lsc. In  other words, under the assumptions of Theorem \ref{thm_SD2bis}, the robust strong duality for pairs  ${\rm (RVP)-(RVD)}$ and $({\rm RVP})-({\rm RVD}_w)$ are stable in a stronger sense that the objective mapping $F$ can be perturbed by arbitrary mapping provided that properties: ``proper", ``$K$-convex", and ``positively $K$-lsc"  are still reserved.

\end{remark}

\subsection{Robust duality via $k$-sectional convexity}
{Fix} $k\in \inte K$. By letting {$\A_k$} play the role of $\A$ (or $\B$) as the qualifying set, one {gets} the {\it dual problem $({\rm RVD}^k)$} as follows:
\begin{align*}
	&{({\rm RVD}^k)}\quad\  \mathop{\wsup}\limits_{(z^*, u)\in    S^+\times \U} 
		\mathop{\winf}_{x\in C} [F(x)+ (z^*\circ G_u)(x)k]. 
\end{align*}
The {\it robust strong duality} and the {\it $\V$-stable robust strong duality} for pair $({\rm RVP})-({\rm RVD}^k)$ can be understood by the same way   as the previous subsection.

\begin{theorem}[Principles of  robust  strong $\V$-stable duality II] 
	\label{thm_SD1b} 
	Let $\emptyset \ne \V\subset Y$. The following statements are equivalent:
	\begin{itemize}[nosep]
		\item[$({\rm e}_k)$] $\epi(F+I_A)^*\cap (\V\times Y)= {\A_k} \cap (\V\times Y)$, 

		\item[$({\rm g}_k)$] The robust strong $\V$-stable duality holds for the pair ${\rm (RVP)}-({\rm RVD}^k).$
	\end{itemize}
	\end{theorem}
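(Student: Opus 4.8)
The plan is to obtain $[({\rm e}_k)\Leftrightarrow({\rm g}_k)]$ by imitating, step for step, the proof of Theorem~\ref{thm_SD1}, with the following three substitutions: the qualifying set $\A$ is replaced by $\A_k$ (see \eqref{Ak}), the Lagrangian dual $({\rm RVD})$ is replaced by $({\rm RVD}^k)$, and the principle of stable robust vector Farkas lemma~I, Theorem~\ref{thm_CSFI}, is replaced by its counterpart for $\A_k$, Theorem~\ref{thm_CSFIbbis}. Throughout, for $L\in\V$ let $({\rm RVP}^L)$ be as in \eqref{rvp_L} and let $({\rm RVD}^{k,L})$ denote the problem $({\rm RVD}^k)$ with $F$ replaced by $F-L$; then by definition the robust strong $\V$-stable duality for the pair $({\rm RVP})-({\rm RVD}^k)$ asserts exactly that $\wmax({\rm RVD}^{k,L})=\winf({\rm RVP}^L)$ for every $L\in\V$.

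For $[({\rm e}_k)\Rightarrow({\rm g}_k)]$ I would fix $L\in\V$ and first invoke Theorem~\ref{thm_CSFIbbis} with $\W=Y$: since $({\rm e}_k)$ holds, the equivalence $(\alpha)\Leftrightarrow(\delta)$ is valid for every $(L,y)\in\{L\}\times Y$; in other words the $k$-version of \cite[Theorem~1$(ii)$]{DL17VJM} holds with $F-L$ in place of $F$ and with the multiplier operators constrained to the family $\{k\cdot z^*:z^*\in S^+\}\subset\mathcal L_+(S,K)$. Weak duality for the pair $({\rm RVP}^L)-({\rm RVD}^{k,L})$ is immediate, since for any $(z^*,u)\in S^+\times\U$ and any feasible $x$ one has $(z^*\circ G_u)(x)\le 0$, whence $(z^*\circ G_u)(x)k\in -K$ (as $k\in\inte K\subset K$) and therefore $(F-L)(x)+(z^*\circ G_u)(x)k\leqq_K (F-L)(x)$. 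To upgrade this to $\wmax({\rm RVD}^{k,L})=\winf({\rm RVP}^L)$ I would run the proof of $[({\rm a})\Rightarrow({\rm b})]$ of \cite[Theorem~6]{DL17VJM} essentially verbatim: starting from $\bar v\in\winf({\rm RVP}^L)$, one applies the equivalence $(\alpha)\Leftrightarrow(\delta)$ to the pairs $(L,y)$ with $y$ near $\bar v$ to produce a single $(z^*,u)\in S^+\times\U$ with $\bar v\in\winf_{x\in C}[(F-L)(x)+(z^*\circ G_u)(x)k]$, which yields $\bar v\in\wmax({\rm RVD}^{k,L})$.

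For $[({\rm g}_k)\Rightarrow({\rm e}_k)]$, note first that Proposition~\ref{epi_include} always gives $\A_k\cap(\V\times Y)\subset\epi(F+I_A)^*\cap(\V\times Y)$, so only the reverse inclusion requires proof. Take $(L,y)\in\epi(F+I_A)^*\cap(\V\times Y)$; by \eqref{eq111} this means $y+F(x)-L(x)\notin-\inte K$ for all $x\in A\cap\dom F$, i.e.\ assertion $(\alpha)$ holds for $(L,y)$. Since $({\rm g}_k)$ holds, $\wmax({\rm RVD}^{k,L})=\winf({\rm RVP}^L)$, and I would then reproduce the argument behind ``(51)'' in \cite{DL17VJM} (with $F-L$ in place of $F$ and the dual $({\rm RVD}^{k,L})$): the fact that the dual value is \emph{attained} produces $(z^*,u)\in S^+\times\U$ with $y+F(x)+(z^*\circ G_u)(x)k-L(x)\notin-\inte K$ for all $x\in C$, that is, $(L,y)\in\epi(F+I_C+(k\cdot z^*)\circ G_u)^*\subset\A_k$. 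Hence $\epi(F+I_A)^*\cap(\V\times Y)\subset\A_k\cap(\V\times Y)$, which is $({\rm e}_k)$.

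The main obstacle is not the overall logic, which is a transparent relabelling of the proof of Theorem~\ref{thm_SD1}, but rather verifying that the two pieces borrowed from \cite{DL17VJM} (Theorem~6 and the computation ``(51)'') survive the restriction of the dual operator to the one-parameter family $\{k\cdot z^*:z^*\in S^+\}$ instead of all of $\mathcal L_+(S,K)$; in particular one must recheck the weak-infimum bookkeeping in $Y^\bullet$ (the decomposition of Lemma~\ref{lemextra2} and the attainment producing the $\wmax$) under this constraint on the multipliers. Once that is confirmed, the equivalence $[({\rm e}_k)\Leftrightarrow({\rm g}_k)]$ follows exactly as above.
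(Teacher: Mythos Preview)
Your proposal is correct and follows essentially the same approach as the paper: the paper's own proof is a one-line instruction to repeat the argument of Theorem~\ref{thm_SD1} with Theorem~\ref{thm_CSFIbbis} in place of Theorem~\ref{thm_CSFI}, and your proposal spells out exactly those substitutions ($\A\to\A_k$, $({\rm RVD})\to({\rm RVD}^k)$, and the corresponding appeal to \cite[Theorem~6]{DL17VJM} and the computation ``(51)'' there). Your caveat about rechecking that the borrowed arguments from \cite{DL17VJM} survive the restriction of the dual operators to $\{k\cdot z^*:z^*\in S^+\}$ is a fair point of diligence, but the paper treats this as routine and does not flag any additional difficulty.
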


\begin{proof}
Use the same argument as in proof of  $[({\rm e})\Longleftrightarrow({\rm g})]$ in Theorem \ref{thm_SD1}, using  Theorem \ref{thm_CSFIbbis}  instead of Theorem \ref{thm_CSFI}.\end{proof}

\begin{theorem}[Principle of convex  robust strong $\V$-stable duality II]
	\label{thm_SD2b}
	Let $\emptyset \ne \V\subset Y$.
	Assume $(\mathcal{H}_0)$ hold and consider the following statement: 
	  	\begin{itemize}[nosep]
		\item[$({\rm e}^\prime_k)$] {$\A_k$} is $k$-sectionally 
			convex  and  closed regarding $\V\times Y$.
	\end{itemize}
	Then, $[({\rm e}^\prime_k)\Longleftrightarrow ({\rm g}_k)]$, where $({\rm g}_k)$ is the statement in Theorem \ref{thm_SD1b}.   
\end{theorem}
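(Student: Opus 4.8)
The plan is to mirror exactly the structure of the proof of Theorem \ref{thm_SD2} (the first convex $\V$-stable duality principle), but now using the representation of $\epi(F+I_A)^*$ through the set $\A_k$ rather than through $\A$ or $\B$. The key input is Theorem \ref{thm_epi_presenting1}, which under $(\mathcal{H}_0)$ gives, for our fixed $k\in\inte K$,
$$
\epi(F+I_A)^* = \cl\big(\sco\nolimits_k \A_k\big).
$$
First I would use this identity to translate the hypothesis $({\rm e}'_k)$ into a statement about $\epi(F+I_A)^*$: by definition $\A_k$ being $k$-sectionally convex and closed regarding $\V\times Y$ means $\cl(\sco_k\A_k)\cap(\V\times Y)=\A_k\cap(\V\times Y)$, and substituting the displayed identity this is precisely
$$
\epi(F+I_A)^*\cap(\V\times Y)=\A_k\cap(\V\times Y),
$$
which is exactly statement $({\rm e}_k)$ of Theorem \ref{thm_SD1b}. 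Conversely $({\rm e}_k)$ combined with the same identity yields $\cl(\sco_k\A_k)\cap(\V\times Y)=\A_k\cap(\V\times Y)$, i.e. $({\rm e}'_k)$. Hence $[({\rm e}'_k)\Leftrightarrow({\rm e}_k)]$ under $(\mathcal{H}_0)$.

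Next I would simply invoke Theorem \ref{thm_SD1b}, which asserts $[({\rm e}_k)\Leftrightarrow({\rm g}_k)]$ for any nonempty $\V\subset Y$ (note the ambient space is $\L(X,Y)$; the $\V$ here is understood, as throughout Section 6, as a subset giving the perturbation slots, consistent with the statement). Chaining the two equivalences gives $[({\rm e}'_k)\Leftrightarrow({\rm g}_k)]$, which is the claim. So the proof is essentially two lines: \emph{apply Theorem \ref{thm_epi_presenting1} to rewrite $({\rm e}'_k)$ as $({\rm e}_k)$, then apply Theorem \ref{thm_SD1b}.}

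I do not expect any genuine obstacle here; the real content was already packed into Theorem \ref{thm_epi_presenting1} (the sectional-convex-hull representation of $\epi(F+I_A)^*$) and into Theorem \ref{thm_SD1b} (the Farkas-to-duality passage via Theorem \ref{thm_CSFIbbis}). The only point requiring a little care is bookkeeping about which ambient space $\V$ lives in and checking that the phrase ``$k$-sectionally convex and closed regarding $\V\times Y$'' unwinds to the stated set equality using the definition given just before Theorem \ref{thm_CSFI}; once that is spelled out, the substitution is immediate and the rest is a citation.
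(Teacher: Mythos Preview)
Your proposal is correct and follows essentially the same approach as the paper: use Theorem \ref{thm_epi_presenting1} under $(\mathcal{H}_0)$ to identify $\epi(F+I_A)^*$ with $\cl(\sco_k\A_k)$, so that $({\rm e}'_k)$ becomes exactly $({\rm e}_k)$, and then invoke Theorem \ref{thm_SD1b}. The paper's proof is simply a two-line version of what you wrote.
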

\begin{proof}
As  $(\mathcal{H}_0)$ is satisfied,    Theorem \ref{thm_epi_presenting1} gives    $\epi (F+I_A)^*= {\cl(\sco_{k} \A_{k})}$ and  so,   $({\rm e}_k^\prime)$  in this case is nothing else but $({\rm e}_k)$ in Theorem \ref{thm_SD1b}.  The conclusion now follows from Theorem \ref{thm_SD1b}.\end{proof}

\begin{theorem}[Convex robust strong stable duality II]
\label{thm_SD2bisb}
Assume that $\U$ is a compact space, that $Z$ is a normed space, and that  the  hypotheses $(\mathcal{H}_0)$, $(\mathcal{H}_1)$, $(\mathcal{H}_2)$ and  the Slater-type condition $(C_0)$ hold. Then  strong robust stable duality holds for pair  $({\rm RVP})-({\rm RVD}^k).$
\end{theorem}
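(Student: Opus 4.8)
The plan is to obtain this as an immediate packaging of the tools already assembled, exactly paralleling the proof of Theorem \ref{thm_SD2bis} but routed through the $\A_k$-based duality principle rather than the $\A$- and $\B$-based ones. Concretely, I would combine Proposition \ref{cor_5.1zzz} (which supplies the needed qualifying equality) with Theorem \ref{thm_SD2b} (which converts that equality into the stable duality statement).

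First I would invoke Proposition \ref{cor_5.1zzz}: since $\U$ is compact, $Z$ is a normed space, and $(\mathcal{H}_0)$, $(\mathcal{H}_1)$, $(\mathcal{H}_2)$, $(C_0)$ all hold, the set $\A_k$ is $k$-sectionally convex and $k$-sectionally closed for the fixed $k\in\inte K$. Unwinding the terminology introduced in Section \ref{sec3}, this says precisely that $\cl(\sco_k\A_k)\cap(\V\times\W)=\A_k\cap(\V\times\W)$ for every $\emptyset\ne\V\subset\L(X,Y)$ and $\emptyset\ne\W\subset Y$; taking $\V=\L(X,Y)$ and $\W=Y$ yields statement $({\rm e}^\prime_k)$ of Theorem \ref{thm_SD2b} relative to $\L(X,Y)\times Y$. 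In fact the proof of Proposition \ref{cor_5.1zzz} gives the stronger identity $\epi(F+I_A)^*=\A_k=\A=\B$, so $({\rm e}^\prime_k)$ is available for all $\V$ at once, and in particular for $\V=\L(X,Y)$.

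Next, since $(\mathcal{H}_0)$ holds, Theorem \ref{thm_SD2b} applies with $\V=\L(X,Y)$ and gives the equivalence $[({\rm e}^\prime_k)\Longleftrightarrow({\rm g}_k)]$. Hence $({\rm g}_k)$ holds, i.e.\ the robust strong $\L(X,Y)$-stable duality holds for the pair $({\rm RVP})-({\rm RVD}^k)$, which by the convention adopted in Subsection \textit{Robust duality via $k$-sectional convexity} is exactly the assertion that robust strong stable duality holds for that pair. This finishes the argument.

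I do not expect a genuine obstacle: the theorem is a bookkeeping consequence of Proposition \ref{cor_5.1zzz} (whose real content is Propositions \ref{pro_5.1} and \ref{pro_5.1zzzz} together with Corollary \ref{rem_4.1zba}) and Theorem \ref{thm_SD2b} (whose content rests on Theorem \ref{thm_epi_presenting1} and Theorem \ref{thm_CSFIbbis}). The only point requiring a moment's care is matching the ``$\V$-stable'' terminology — one must check that the qualifying equality produced by Proposition \ref{cor_5.1zzz} is valid for the choice $\V=\L(X,Y)$ — and this is immediate since that proposition yields the unrestricted identity $\epi(F+I_A)^*=\A_k$.
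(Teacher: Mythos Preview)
Your proposal is correct and follows essentially the same approach as the paper, which simply states that the result follows from Proposition \ref{cor_5.1zzz} and Theorem \ref{thm_SD2b}. Your additional unpacking of why $({\rm e}^\prime_k)$ holds for $\V=\L(X,Y)$ is accurate and makes the one-line proof explicit.
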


\begin{proof}
It follows from Proposition \ref{cor_5.1zzz}   and Theorem \ref{thm_SD2b}.
\end{proof}

\section{Applications  to  robust convex optimizations} 
In this section we will specialize  our results on   robust  strong (stable)  duality for vector problems obtained in Section 6 to some classes of  (scalar) robust convex optimizations, which means that  we will consider the case  when $Y=\mathbb{R}$ and $K=\mathbb{R}_+$ (and hence, $\R^\bullet \equiv \overline{\R} := \R \cup \{ \pm \infty\}$). In this setting, we will write    $f$ (instead of $F$)  for the objective function of problems. Observed also that in this case  $\L(X, Y)$ becomes $X^\ast$, both the cones $\L_+(S, K)$ and $\L_+^w(S, K)$ now collapse to the positive dual cone $S^+$ of $S$,  and  the conjugate $f^*$ is none  other than the usual conjugate $f^*$ in the sense of convex analysis.  As results, the specification even to robust scalar problems still produce some  new   robust strong duality results, some  that  extend, or cover the known ones in the literature.

\subsection{General  robust convex optimization problem}

Consider the  robust convex optimization problem: 
\begin{align}
	{\rm(RP)}\qquad {\inf  }\left\{ f(x): x\in C,\; G_u(x)\in -S,\; \forall u\in \U\right\} \nonumber
\end{align}
where $X,Z$ are lcHtvs,  $S$ is a closed convex cone of $Z$,  $\U$ is an uncertainty set, $f\in \Gamma(X)$,  $G_u \colon X\to Z^\bullet$ is proper, $S$-convex and $S$-epi closed mapping for all $u\in \U$,  and $C\subset X$ is  a nonempty closed and convex subset of $X$. Note that under these assumptions, $(\mathcal{H}_0)$  is satisfied.

 Let us retain call   $ A$ (in \eqref{feas-A})   the \textit{feasible set} of ${\rm (RP)}$. Assume   that $ A\cap \dom f \not=\emptyset$. 

For the problem (RP),  the qualifying   sets $\A$,     $\B$,  and $\A_k$ (for any $k \in \inte \R_+$) in Section 4  collapse to the unique one    
	\begin{align*}
	\widehat{\A}  &:= \bigcup\limits_{(z^*, u )\in  S^+ \times \U } \epi (f+i_{C}+z^*\circ G_u)^{\ast }.
	\end{align*}
The Lagrangian dual problem $({\rm RVD})$, the weak Lagrangian dual problem $({\rm RVD}_w)$ and dual problem $({\rm RVD}^k)$  in this case  collapse to the    unique Lagrange dual problem  $({\rm RD})$ of $\rm(RP)$: 
\begin{align*}
	({\rm RD})\qquad \mathop{{\sup}}\limits_{( z^*, u )\in  S^+ \times \U}  
		\mathop{\inf}_{x\in C} (f+ z^*\circ G_u)(x)
\end{align*}
and, for all $\emptyset\ne \V\subset X^*$, ``the robust strong $\V$-stable duality holds for the pair $({\rm RP})-({\rm RD})$'' means that, for all $x^*\in \V$, 
$$ {\inf  }\left\{ f(x)-\la x^*,x\ra: x\in C,\; G_u(x)\in -S,\; \forall u\in \U\right\} = \mathop{{\max}}\limits_{( {z^*}, u )\in  S^+ \times \U}  
		\mathop{\inf}_{x\in C} (f-x^*+{z^*\circ G_u})(x). $$
The  next two corollaries come directly from  Theorem  \ref{thm_SD2}  and Theorem 
\ref{thm_SD2bis}, respectively.

\begin{corollary}{\rm \cite[Theorem 6.3]{DMVV17-Robust-SIAM}}{\rm (Principle of   robust convex  strong $\V$-stable duality)}
	\label{cor_SD1} 
	Let $\emptyset\ne \V\subset X^*$. The following statements are equivalent:
	\begin{itemize}[nosep]
		\item[$(\rm k)$] The set $\widehat{\mathcal{A}}$ is  closed and convex regarding  $\V\times \R$,

		\item[$(\rm l)$] The robust strong $\V$-stable duality holds for the pair $({\rm RP})-({\rm RD}).$
	\end{itemize}
\end{corollary}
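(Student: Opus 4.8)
The plan is to derive Corollary~\ref{cor_SD1} as the scalar specialization ($Y=\mathbb{R}$, $K=\mathbb{R}_+$) of Theorem~\ref{thm_SD2}. First I would record the reductions that occur in this setting: $\mathcal{L}(X,Y)$ becomes $X^*$, the conjugate mappings become the ordinary Fenchel conjugates, the cones $\mathcal{L}_+(S,K)$ and $\mathcal{L}_+^w(S,K)$ both collapse to $S^+$, and hence the three qualifying sets $\mathcal{A}$, $\mathcal{B}$, $\mathcal{A}_k$ all coincide with $\widehat{\mathcal{A}}$, while the dual problems $({\rm RVD})$ and $({\rm RVD}_w)$ both reduce to $({\rm RD})$. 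Consequently, statements $({\rm g})$ and $({\rm h})$ of Theorem~\ref{thm_SD1} both become exactly statement~$({\rm l})$, namely the robust strong $\mathcal{V}$-stable duality for the pair $({\rm RP})-({\rm RD})$. Moreover $(\mathcal{H}_0)$ is in force here, as was already observed right after the formulation of $({\rm RP})$ (since $f\in\Gamma(X)$, each $G_u$ is proper, $S$-convex and $S$-epi closed, and $C$ is nonempty, closed and convex), so Theorem~\ref{thm_SD2} is applicable.

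The one point requiring a little care is the collapse of the $k$-sectional notions to the usual ones. For this I would invoke the observation made right after \eqref{Ek}: for any $\alpha\in\mathbb{R}\setminus\{0\}$, in particular for any $k\in\inte\mathbb{R}_+=\;]0,+\infty[$, one has $\alpha\cdot(X^*\times\mathbb{R})+(\bar x^*,\bar r)=X^*\times\mathbb{R}$ for every $(\bar x^*,\bar r)$, so the only ``section'' of a set $\mathcal{E}\subset X^*\times\mathbb{R}$ is $\mathcal{E}$ itself. Hence ``$k$-sectionally convex'' means ``convex'', ``$k$-sectionally closed'' means ``closed'', and $\cl(\sco_k\mathcal{E})=\cl\co\mathcal{E}$. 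Therefore ``$\exists\,k\in\inte\mathbb{R}_+$ such that $\widehat{\mathcal{A}}$ is $k$-sectionally convex and closed regarding $\mathcal{V}\times\mathbb{R}$'' is literally the same as ``$\cl\co\widehat{\mathcal{A}}\cap(\mathcal{V}\times\mathbb{R})=\widehat{\mathcal{A}}\cap(\mathcal{V}\times\mathbb{R})$'', i.e.\ statement~$({\rm k})$.

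Putting these identifications together, condition $({\rm e}_1)$ of Theorem~\ref{thm_SD2} becomes $({\rm k})$ and condition $({\rm g})$ becomes $({\rm l})$, so the equivalence $[({\rm e}_1)\Leftrightarrow({\rm g})]$ furnished by Theorem~\ref{thm_SD2} is precisely $[({\rm k})\Leftrightarrow({\rm l})]$, which is the assertion of the corollary. There is no serious obstacle: the argument is entirely a bookkeeping translation of the vector-valued framework into the scalar one, and the only subtlety, that the $k$-sectional convexity and closedness notions degenerate to ordinary convexity and closedness when $Y=\mathbb{R}$, is already settled by the remarks in Section~3.3.
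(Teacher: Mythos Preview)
Your proposal is correct and follows essentially the same approach as the paper, which simply states that the corollary ``comes directly from Theorem~\ref{thm_SD2}.'' Your write-up spells out in detail the identifications the paper leaves implicit---the collapse of $\mathcal{A}$, $\mathcal{B}$, $\mathcal{A}_k$ to $\widehat{\mathcal{A}}$, of the $k$-sectional notions to ordinary convexity and closedness (as noted in Section~3.3), and of $({\rm RVD})$, $({\rm RVD}_w)$ to $({\rm RD})$---but the underlying argument is the same.
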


\begin{corollary}\label{cor_7.2}
Assume that $\U$ is a compact space, that $Z$ is {a normed space}, and that  the  hypotheses $(\mathcal{H}_1)$ and $(\mathcal{H}_2)$ {in Section 5} hold.
Assume further that the following condition holds:
\begin{itemize}[nosep]
\item[$(\widehat{\rm C}_0)$] {$\forall u\in \U,\; \exists x_u\in C\cap \dom f: G_u(x_u)\in -\inte S.$}
\end{itemize}
Then, the robust strong $\V$-stable duality holds for the pair $({\rm RP})-({\rm RD}).$
\end{corollary}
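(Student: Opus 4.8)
The plan is to obtain Corollary~\ref{cor_7.2} as the scalar specialization ($Y=\mathbb{R}$, $K=\mathbb{R}_+$) of Theorem~\ref{thm_SD2bis}. First I would verify that the standing data of $({\rm RP})$ place us in the hypotheses of that theorem: since $f\in\Gamma(X)$, each $G_u$ is proper, $S$-convex and $S$-epi closed, and $C$ is nonempty, closed and convex, the hypothesis $(\mathcal{H}_0)$ holds (as already noted right after the statement of $({\rm RP})$); $(\mathcal{H}_1)$ and $(\mathcal{H}_2)$ are assumed outright; $\U$ is compact and $Z$ is normed by assumption; and the Slater-type condition $(\widehat{\rm C}_0)$ is nothing but $(C_0)$ rewritten in the current notation, because $C\cap\dom F=C\cap\dom f$ when $F=f$. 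Hence Theorem~\ref{thm_SD2bis} applies and yields the robust strong stable duality for the pairs $({\rm RVP})-({\rm RVD})$ and $({\rm RVP})-({\rm RVD}_w)$, i.e.\ $\wmax({\rm RVD}^L)=\winf({\rm RVP}^L)$ for every $L\in\L(X,\mathbb{R})$.

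Next I would translate all the objects involved under $Y=\mathbb{R}$, $K=\mathbb{R}_+$, using the identifications recorded at the start of Section~7: $\L(X,\mathbb{R})=X^*$, the cones $\L_+(S,\mathbb{R}_+)$ and $\L^w_+(S,\mathbb{R}_+)$ both reduce to $S^+$, the conjugate mapping becomes the usual Fenchel conjugate, and for a subset $M\subset\mathbb{R}$ one has $\winf M=\{\inf M\}$ and $\wmax M=M\cap\{\sup M\}$. Under these identifications $({\rm RVP})$ is $({\rm RP})$, the two dual problems $({\rm RVD})$ and $({\rm RVD}_w)$ both collapse to $({\rm RD})$, and the perturbation $F\mapsto F-L$ defining $({\rm RVP}^L)$ corresponds to $f\mapsto f-x^*$ with $x^*\in X^*$ the functional attached to $L$, so that $({\rm RVD}^L)$ collapses to $({\rm RD}^{x^*})$. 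Consequently the equality $\wmax({\rm RVD}^L)=\winf({\rm RVP}^L)$ becomes $\max({\rm RD}^{x^*})=\inf({\rm RP}^{x^*})$ for every $x^*\in X^*$, which is exactly the robust strong $X^*$-stable duality for $({\rm RP})-({\rm RD})$; restricting $x^*$ to $\V$ gives robust strong $\V$-stable duality for any $\emptyset\ne\V\subset X^*$, as claimed.

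Since the corollary is a pure specialization, there is no substantive obstacle. The only point that needs a little care --- rather than a one-line appeal --- is the bookkeeping of the second paragraph: one must check that no extra multiplicity creeps into the scalar weak-infimum/weak-maximum sets and that the perturbation parameter in $({\rm RVP}^L)$ matches the one in $({\rm RP}^{x^*})$; both are immediate once the identifications of Section~7 are in hand. (One may alternatively quote Theorem~\ref{thm_SD2} directly in the scalar case, since $\widehat{\A}$ is automatically convex and closed regarding $\V\times\R$ under $(\mathcal{H}_1)$, $(\mathcal{H}_2)$, $(\widehat{\rm C}_0)$ by Proposition~\ref{cor_5.1zzz}; this is the route underlying Corollary~\ref{cor_SD1}.)
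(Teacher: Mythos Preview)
Your proposal is correct and follows essentially the same route as the paper: the corollary is obtained by specializing Theorem~\ref{thm_SD2bis} to $Y=\mathbb{R}$, $K=\mathbb{R}_+$, after noting that the standing assumptions on $({\rm RP})$ ensure $(\mathcal{H}_0)$ and that $(\widehat{\rm C}_0)$ is $(C_0)$ in this setting. The paper records this as a one-line attribution to Theorem~\ref{thm_SD2bis}; your careful unpacking of the scalar identifications is accurate and adds nothing beyond what Section~7 already spells out.
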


\subsection{Robust  convex programming under uncertain inequality constraints}

Consider the {\it robust convex  programming} of the form 
\begin{equation*}
	{\rm({RCP})}\qquad   \inf\left\{ f(x): x\in C,\; g_t(x,u_t)\le 0,\; \forall u_t\in U_t,\;\forall t\in T\right\}
\end{equation*} 
where $f\in\Gamma(X)$, $T$ is a possibly infinite index set, $U_t$ is uncertainty set for each $t\in T$,
$g_t(.,u_t)\in \Gamma(X)$  for all $u_t\in U_t$ and $t\in T$, and $C\subset X$ is nonempty closed and convex.
Let $A:=\left\{x\in C:  g_t(x,u_t)\le 0,\; \forall u_t\in U_t,\; \forall t\in T 
\right\}$ and assume  that $ A\cap\dom f\ne\emptyset$.

We will propose  several  ways   to  transform $({\rm RCP})$ to the form of $({\rm RP})$.  The robust strong (stable) duality results in the  previous subsection are then applied  to get the variants of robust strong duality results for  (RCP), which are new, extend or cover the known ones in the literature. 

$\bullet$ {\it The first way:} Take $Z=\mathbb{R}^{T}$, $S=\mathbb{R}^T_+$ , $\U=\prod_{t\in T} U_t$, $G_u(x)=(g_t(x,u_t))_{t\in T}$ for all $x\in X$ and $u=(u_t)_{t\in T} \in \U$.
We consider  $\mathbb{R}^{T}$  endowed with the
product topology and its dual space, $\mathbb{R}^{(T)}$, is the space 
 of generalized finite sequences (i.e., the functions   $\lambda
=(\lambda _{t})_{t\in T}\in \mathbb{R}^{T}$ such that its supporting
set $\supp\lambda :=\{t\in T:\lambda _{t}\neq 0\}$ is finite)
 with dual
product defined by 
\begin{equation*}
\left\langle \lambda ,v\right\rangle :=\left\{ 
\begin{array}{ll}
\sum\limits_{t\in \supp\lambda }\lambda _{t}v_{t}, & \text{if }\lambda
\neq 0_{T}, \\ 
\ \ 0, & \text{otherwise,}%
\end{array}%
\right. \text{ }
\end{equation*}%
for all  $\left( \lambda ,v\right) \in \mathbb{R}^{(T)}\times \mathbb{R
}^{T}.$ The positive cones in  $\mathbb{
R}^{T}$ and in $\mathbb{%
R}^{(T)}$ is denoted by $\mathbb{R}_+^T$ and  $\mathbb{R}_{+}^{(T)}$, respectively.  In this setting, the qualifying set $\widehat{\mathcal{A}}$ becomes
\begin{align*}
\widehat\A_1&:= \bigcup\limits_{\substack{(\lambda_t)_{t\in T}\in \mathbb{R}_+^{(T)}\\(u_t)_{t\in T}\times \U} } \epi \left(f+ i_C +\sum_{t\in T} \lambda_tg_t(.,u_t)\right)^{\ast } 
\end{align*}
and the robust dual problem $(\rm RD)$ now becomes 
$$({\rm RCD}_1)\qquad 
\mathop{{\sup}}\limits_{\substack{(\lambda_t)_{t\in T}\in \mathbb{R}_+^{(T)}\\(u_t)_{t\in T}\times \U} }
		\mathop{\inf}_{x\in C} \left(f(x)+ \sum_{t\in T}\lambda_t g_t(x,u_t)\right). 
$$
The robust  dual problem of this form   was considered in other works as \cite{DL17VJM,  DGLV17-Robust, JL10, JLW13}.
The next corollary is   a direct consequence of Corollary \ref{cor_SD1} which turns back to   \cite[Theorem 6.4]{DMVV17-Robust-SIAM} and  covers \cite[Theorem 4.1]{DGLV17-Robust} (for $i=O$) and  \cite[Theorem 3.1]{JL10}.

\begin{corollary} \label{cor_7.5}
	Let $\emptyset\ne \V\subset X^*$. The following statements are equivalent:
	\begin{itemize}[nosep]
		\item[$({\rm k}_1)$] The set $\widehat{\mathcal{A}}_1$ is  closed and convex regarding  $\V\times \R$,

		\item[$({\rm l}_1)$] The robust strong $\V$-stable duality holds for $({\rm RCP})-({\rm RCD}_1)$, i.e., for all $x^*\in\V$,
\begin{equation}\label{eq_7.6}
\hskip-1cm
\inf_{\substack{x\in C,\\g_t(x,u_t)\le 0,\; \forall u_t\in U_t, \; \forall t\in T}}\hskip-1.3cm  [f(x)-\la x^*,x\ra]=\mathop{{\max}}\limits_{\substack{(\lambda_t)_{t\in T}\in \mathbb{R}_+^{(T)}\\(u_t)_{t\in T}\times \U} }
		\mathop{\inf}_{x\in C} \left(f(x)-\la x^*,x\ra+ \sum_{t\in T}\lambda_t g_t(x,u_t)\right).
\end{equation}
	\end{itemize}
\end{corollary}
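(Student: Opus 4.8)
The plan is to reduce $({\rm RCP})$ to an instance of the general robust convex problem $({\rm RP})$ of Subsection~7.1 and then invoke Corollary~\ref{cor_SD1}. Following ``the first way'' I would set $Z=\mathbb{R}^{T}$ equipped with the product topology (whose topological dual is $\mathbb{R}^{(T)}$ with the pairing recalled above), $S=\mathbb{R}^{T}_{+}$, $\U=\prod_{t\in T}U_{t}$, and $G_u(x)=(g_t(x,u_t))_{t\in T}$ for $x\in X$, $u=(u_t)_{t\in T}\in\U$.

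The first thing to check is that the data $(f,C,S,\U,(G_u)_u)$ satisfy the standing assumptions of $({\rm RP})$, so that, in particular, $(\mathcal{H}_0)$ holds. Here $Z$ is a lcHtvs and $S$ is a closed convex cone by construction, so it remains to look at $G_u$. For properness, each $g_t(\cdot,u_t)\in\Gamma(X)$ is $>-\infty$, so $G_u$ never equals $-\infty_Z$; and if $x_0\in A\cap\dom f$ (nonempty by assumption) then $g_t(x_0,u_t)\le 0<+\infty$ for all $t$ and all $u_t$, whence $x_0\in\dom G_u$ for every $u\in\U$. For $S$-convexity and $S$-epi closedness, one writes $\epi_S G_u=\bigcap_{t\in T}\{(x,z)\in X\times\mathbb{R}^{T}:\ z_t\ge g_t(x,u_t)\}$: each set in the intersection is convex because $g_t(\cdot,u_t)$ is convex, and closed in the product topology because $z\mapsto z_t$ is continuous and $g_t(\cdot,u_t)$ is lsc, so $\epi_S G_u$ is convex and closed. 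Finally, since $\U=\prod_t U_t$, one has $G_u(x)\in-S$ for every $u\in\U$ exactly when $g_t(x,u_t)\le 0$ for all $u_t\in U_t$ and all $t\in T$, so the feasible set $C\cap\bigl(\bigcap_{u\in\U}G_u^{-1}(-S)\bigr)$ of the transformed $({\rm RP})$ is precisely $A$, and $A\cap\dom f\ne\emptyset$.

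Next I would match the dual objects. Since $S^{+}=(\mathbb{R}^{T}_{+})^{+}=\mathbb{R}^{(T)}_{+}$, for $z^{*}=\lambda=(\lambda_t)_{t\in T}\in\mathbb{R}^{(T)}_{+}$ and $u=(u_t)_{t\in T}\in\U$ one gets $(z^{*}\circ G_u)(x)=\langle\lambda,G_u(x)\rangle=\sum_{t\in T}\lambda_t\,g_t(x,u_t)$; hence the qualifying set $\widehat{\mathcal{A}}$ of $({\rm RP})$ is exactly $\widehat{\mathcal{A}}_1$ and its Lagrange dual $({\rm RD})$ is exactly $({\rm RCD}_1)$. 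For $x^{*}\in\V\subset X^{*}$ the perturbed objective $f-\langle x^{*},\cdot\rangle$ is again in $\Gamma(X)$, so the perturbed problem is of the same type and ``robust strong $\V$-stable duality for $({\rm RP})-({\rm RD})$'' is, verbatim, the family of equalities \eqref{eq_7.6} over $x^{*}\in\V$. Because $Y=\mathbb{R}$, the notion ``$k$-sectionally convex and closed regarding $\V\times\mathbb{R}$'' collapses to ``closed and convex regarding $\V\times\mathbb{R}$'', so condition $({\rm k}_1)$ is precisely hypothesis $({\rm k})$ of Corollary~\ref{cor_SD1} for the set $\widehat{\mathcal{A}}=\widehat{\mathcal{A}}_1$. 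Applying Corollary~\ref{cor_SD1} then yields $[({\rm k}_1)\Leftrightarrow({\rm l}_1)]$.

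The argument is essentially a translation, and I do not expect a real obstacle; the only point that requires a little care is the topological one, namely recalling that the dual of $\mathbb{R}^{T}$ under the product topology is $\mathbb{R}^{(T)}$ (used both to identify $S^{+}$ and to verify the $S$-epi closedness of $G_u$), after which the statement follows by a direct appeal to Corollary~\ref{cor_SD1}.
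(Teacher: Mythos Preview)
Your proposal is correct and follows exactly the route the paper intends: transform $({\rm RCP})$ into an instance of $({\rm RP})$ via ``the first way'' (with $Z=\mathbb{R}^{T}$, $S=\mathbb{R}^{T}_{+}$, $\U=\prod_{t}U_t$, $G_u=(g_t(\cdot,u_t))_{t\in T}$) and then apply Corollary~\ref{cor_SD1}. The paper simply states that the result ``is a direct consequence of Corollary~\ref{cor_SD1}'', so your additional verification that the transformed data satisfy the standing hypotheses of $({\rm RP})$ (properness, $S$-convexity and $S$-epi closedness of $G_u$, identification of $S^+$ with $\mathbb{R}^{(T)}_+$) just makes explicit what the paper leaves implicit.
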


The next result  is a consequence of Corollary \ref{cor_7.2} and extends   \cite[Corollary 3.3]{JL10}.

\begin{corollary}
\label{cor_7.4zz}
Assume that $T$ is finite and that $U_t$ is a compact and convex subset of some  topological vector space for all $t\in T$, and $g_t(x,.)\in -\Gamma(U_t)$ for all $x\in C\cap\dom f$. Assume further that the following condition holds:
\begin{itemize}[nosep]
\item[$(\widehat{\rm C}_0^1)$] $\forall u=(u_t)_{t\in T}\in \U,\; \exists x_u\in C\cap \dom f: g_t(x_u,u_t)<0, \;\forall t\in T.$
\end{itemize}
Then, the robust strong stable duality holds for the pair $({\rm RCP})-({\rm RCD}_1)$. 
\end{corollary}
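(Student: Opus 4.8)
The plan is to realize $({\rm RCP})$ as a particular instance of $({\rm RP})$ through the ``first way'' above --- $Z=\mathbb{R}^{T}$, $S=\mathbb{R}^{T}_+$, $\U=\prod_{t\in T}U_t$, $G_u(x)=(g_t(x,u_t))_{t\in T}$ --- and then to check that every hypothesis of Corollary~\ref{cor_7.2} holds for this data. Since $\widehat{\A}$, $({\rm RD})$ and the associated qualifying and dual objects collapse to $\widehat{\A}_1$ and $({\rm RCD}_1)$, the conclusion of Corollary~\ref{cor_7.2} (used with $\V=X^{*}$) is exactly the robust strong stable duality for $({\rm RCP})-({\rm RCD}_1)$ asserted here.

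I would first dispatch the routine hypotheses of Corollary~\ref{cor_7.2}. Because $T$ is finite, $Z=\mathbb{R}^{T}$ is finite-dimensional, hence a normed space, and $\inte S=\inte\mathbb{R}^{T}_+=\{(v_t)_{t\in T}:v_t>0\ \forall t\in T\}\neq\emptyset$; consequently the Slater-type condition $(\widehat{\rm C}_0)$ required there, namely $\forall u\in\U\ \exists x_u\in C\cap\dom f$ with $G_u(x_u)\in-\inte S$, is literally $(\widehat{\rm C}_0^1)$. Being a finite product of compact sets, $\U=\prod_{t\in T}U_t$ is compact. The standing assumptions of the $({\rm RP})$ setting --- in particular $(\mathcal{H}_0)$ --- hold here, since $f\in\Gamma(X)$, $C$ is nonempty closed and convex, and $G_u$ is proper, $\mathbb{R}^{T}_+$-convex and $\mathbb{R}^{T}_+$-epi closed for each $u$ (its $S$-epigraph is the finite intersection over $t\in T$ of the closed convex epigraphs of $g_t(\cdot,u_t)\in\Gamma(X)$). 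Lastly, $(\mathcal{H}_2)$ follows from Lemma~\ref{rem_5.1zzz}$(iii)$: for each fixed $x\in C\cap\dom f$ and each $t\in T$, $g_t(x,\cdot)\in-\Gamma(U_t)$ is usc on $U_t$, so the map $u=(u_t)_{t\in T}\mapsto G_u(x)=(g_t(x,u_t))_{t\in T}$ is $\mathbb{R}^{T}_+$-uniformly usc.

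The step that needs an actual argument is $(\mathcal{H}_1)$ --- that the collection $\big(u\mapsto G_u(x)\big)_{x\in C\cap\dom f}$ is uniformly $S^{+}$-concave --- and this, together with $\inte\mathbb{R}^{T}_+\neq\emptyset$, is where the finiteness of $T$ is genuinely used; I expect it to be the main point of the proof. For each fixed $t$, $g_t(x,\cdot)$ is concave on the convex set $U_t$ for every $x\in C\cap\dom f$, so by Example~\ref{ex5.1} the family $(g_t(x,\cdot))_{x\in C\cap\dom f}$ is uniformly $\mathbb{R}_+$-concave. Given $z_1^{*}=(\lambda_t^1)_{t\in T}$, $z_2^{*}=(\lambda_t^2)_{t\in T}\in S^{+}=\mathbb{R}^{T}_+$ and $u^1=(u_t^1)_{t\in T}$, $u^2=(u_t^2)_{t\in T}\in\U$, applying this coordinatewise produces, for each $t\in T$, some $\mu_t\ge 0$ and $u_t\in U_t$ with $\lambda_t^1 g_t(x,u_t^1)+\lambda_t^2 g_t(x,u_t^2)\le\mu_t g_t(x,u_t)$ for all $x\in C\cap\dom f$. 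Setting $z^{*}:=(\mu_t)_{t\in T}\in S^{+}$ and $u:=(u_t)_{t\in T}\in\U$ and summing these finitely many inequalities over $t\in T$ yields $(z_1^{*}\circ G_{u^1})(x)+(z_2^{*}\circ G_{u^2})(x)\le(z^{*}\circ G_u)(x)$ for all $x\in C\cap\dom f$, which is $(\mathcal{H}_1)$. With all hypotheses of Corollary~\ref{cor_7.2} in place, that corollary delivers the robust strong stable duality for $({\rm RP})-({\rm RD})$; after the identifications $S^{+}\cong\mathbb{R}_+^{(T)}$, $z^{*}\circ G_u(x)=\sum_{t\in T}\lambda_t g_t(x,u_t)$ and $\U=\prod_{t\in T}U_t$, this is precisely the stable duality for $({\rm RCP})-({\rm RCD}_1)$, and the proof is complete.
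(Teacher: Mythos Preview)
Your proof is correct and follows essentially the same route as the paper's own proof: both realize $({\rm RCP})$ as an instance of $({\rm RP})$ via the ``first way'', verify the compactness of $\U$ and the normed-space structure of $Z=\mathbb{R}^{T}$, derive $(\mathcal{H}_2)$ from Lemma~\ref{rem_5.1zzz}$(iii)$, establish $(\mathcal{H}_1)$ coordinatewise from the concavity of each $g_t(x,\cdot)$ (the paper writes the explicit convex combination, you invoke Example~\ref{ex5.1} per coordinate --- same content), identify $(\widehat{\rm C}_0^1)$ with $(\widehat{\rm C}_0)$, and then apply Corollary~\ref{cor_7.2}.
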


\begin{proof}
Firstly, $\U$ is compact (as $U_t$ is compact for all $t\in T$) and $Z:=\mathbb{R}^T$ is a normed space (note  that $T$ is finite).

 $\bullet$   We now prove that the collection $\Big((u_t)_{t\in T}^m\mapsto (g_t(x,u_t))_{t\in T}\Big)_{x\in C\cap\dom f}$ is uniformly $\mathbb{R}^{(T)}_+$-concave, or equivalently, the hypothesis $(\mathcal{H}_1)$ holds.
For this, take $(\lambda_t^j)_{t\in T} \in \mathbb{R}^{(T)}_+$ and  $(u_t^j)_{t\in T} \in \U$ ($j=1,2$), we will find $(\bar \lambda_t)_{t\in T}\in \mathbb{R}^{(T)}_+$ and $(\bar u_t)_{t\in T}\in \U$ such that
\begin{equation}\label{eq_7.3}
\sum_{t\in T}\lambda_t^1 g_t(x,u^1_t) + \sum_{t\in T}\lambda_t^2 g_t(x,u^2_t) \le \sum_{t\in T}\bar \lambda_t 
g_t(x,\bar u_t),\quad \forall x\in C\cap \dom f.
\end{equation}
To do this, for all $t\in T$, take  $\bar \lambda_t:=\lambda_t^1+\lambda_t^2$ and 
$$\bar u_t:=\begin{cases}
\frac{\lambda^1_t}{\lambda^1_t+\lambda_i^2}u_t^1+ \frac{\lambda^2_t}{\lambda^1_t+\lambda_t^2}u_t^2,&\textrm{if } \lambda_t^1>0 \textrm{ or }\lambda_t^2>0\\
u_t^1&\textrm{else.}
\end{cases}$$
As $g_t(x,.)$ is concave on the convex set $U_t$ for each $t \in T$, one has (see Example \ref{ex5.1}), for all $x\in C\cap\dom f$,
$$\lambda_t^1 g_t(x,u_t^1)+\lambda_t^2 g_t(x,u_t^2)\le \bar\lambda_t g_t(x, \bar u_t), $$
which, in turn,  yields  
 \eqref{eq_7.3}.

$\bullet$ Next,  for all $x\in C\cap\dom f$, as $g_t(x,.)\colon U_t\subset \mathbb{R}^{q_i}\to \mathbb{R}$ is usc for all $t\in T$, by Lemma  \ref{rem_5.1zzz}(iii)),   $(g_t(x,.))_{t\in T}$ is   $\mathbb{R}^{(T)}_+$-uniformly usc,  meaning  that $(\mathcal{H}_2)$ holds.

$\bullet$ Finally, the fulfilment of $(\widehat C_0^1)$ entails that the Slater-type condition $(\widehat C_0)$ in Corollary \ref{cor_7.2} holds.  The conclusion now follows from Corollary \ref{cor_7.2}.
\end{proof}

$\bullet$ {\it The second way:} Take $Z=\mathbb{R}$, $\U=T$, and
$G_t(x)=\sup_{v\in U_t}g_t(x,v)$ for all $x\in X$ and $t\in T$. Then, the qualifying set $\widehat{\mathcal{A}}$ becomes
\begin{align*}
\widehat\A_2   &= \bigcup\limits_{\substack{\lambda\ge 0,\; t\in T  }} \cl\co\left(\bigcup_{v\in U_t}\epi \left(f+i_C+\lambda g_t(.,v)\right)^{\ast }\right)
\end{align*}
(see \cite[Lemma 2.2]{GLi}).
The robust dual problem $(\rm RD)$ now reduces to
$$({\rm RCD}_2)\qquad 
\mathop{{\sup}}\limits_{\substack{\lambda\ge 0,\  {t\in T}  }}
		\mathop{\inf}_{x\in C}\sup_{v\in U_t} \left(f(x)+\lambda g_t(x,v)\right).
$$
This form of robust dual problem of $({\rm RCIP})$ is proposed in \cite[Remark 10]{DGLV17-Robust}. As consequences of Corollaries \ref{cor_SD1}-\ref{cor_7.2}, one gets. 
\begin{corollary}\label{cor_7.5bb}
	Let $\emptyset\ne \V\subset X^*$. The following statements are equivalent:
	\begin{itemize}[nosep]
		\item[$({\rm k}_2)$] The set $\widehat{\mathcal{A}}_2$ is  closed and convex regarding  $\V\times \R$,
		\item[$({\rm l}_2)$] The robust strong $\V$-stable duality holds for the pair $({\rm RCP})-({\rm RCD}_2)$.  
	\end{itemize}
\end{corollary}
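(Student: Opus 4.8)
The plan is to recognize $({\rm RCP})$ as a concrete instance of $({\rm RP})$ through the ``second way'' substitution and then quote Corollary~\ref{cor_SD1}. First I would check that the data $Z=\RR$, $S=\RR_+$, $\U=T$, and $G_t:=\sup_{v\in U_t}g_t(\cdot,v)$ (for $t\in T$) satisfy the standing hypotheses imposed on $({\rm RP})$. Each $G_t$ is a pointwise supremum of the functions $g_t(\cdot,v)\in\Gamma(X)$, hence convex and lsc, i.e.\ $\RR_+$-convex and $\RR_+$-epi closed; it is also proper, since $A\ne\emptyset$ (because $A\cap\dom f\ne\emptyset$) and for every $x\in A$ one has $G_t(x)=\sup_{v\in U_t}g_t(x,v)\le 0<+\infty$, so $A\subset\dom G_t$. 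Together with $f\in\Gamma(X)$ and $C$ nonempty closed convex, this makes $(\mathcal{H}_0)$ hold for the resulting $({\rm RP})$. Its feasible set is $\{x\in C:\ G_t(x)\in-\RR_+,\ \forall t\in T\}=\{x\in C:\ g_t(x,v)\le 0,\ \forall v\in U_t,\ \forall t\in T\}=A$, and $A\cap\dom f\ne\emptyset$ is assumed, so the remaining standing assumption is met.

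The second step is to match the qualifying set and the dual of this $({\rm RP})$ with $\widehat\A_2$ and $({\rm RCD}_2)$. Here the qualifying set is $\widehat\A=\bigcup_{(\lambda,t)\in\RR_+\times T}\epi(f+i_C+\lambda G_t)^*$. For $\lambda>0$ we have $\lambda G_t=\sup_{v\in U_t}\lambda g_t(\cdot,v)$, hence $f+i_C+\lambda G_t=\sup_{v\in U_t}(f+i_C+\lambda g_t(\cdot,v))$, a supremum of functions from $\Gamma(X)$ that is proper (being finite at any point of $A\cap\dom f$); by \cite[Lemma 2.2]{GLi} this gives $\epi(f+i_C+\lambda G_t)^*=\cl\co\bigl(\bigcup_{v\in U_t}\epi(f+i_C+\lambda g_t(\cdot,v))^*\bigr)$, while for $\lambda=0$ the term reduces to $\epi(f+i_C)^*=\cl\co\,\epi(f+i_C)^*$, already of that shape. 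Taking the union over $(\lambda,t)$ yields $\widehat\A=\widehat\A_2$. The Lagrange dual $({\rm RD})$ of this $({\rm RP})$ is $\sup_{(\lambda,t)\in\RR_+\times T}\inf_{x\in C}(f(x)+\lambda G_t(x))=\sup_{\lambda\ge 0,\,t\in T}\inf_{x\in C}\sup_{v\in U_t}(f(x)+\lambda g_t(x,v))=({\rm RCD}_2)$, and the same substitution identifies the perturbed problem $({\rm RP}^{x^*})$ and the $\V$-stable duality statement with their $({\rm RCP})$-counterparts.

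Finally, I would apply Corollary~\ref{cor_SD1} to the $({\rm RP})$ constructed above: it states that ``$\widehat\A$ is closed and convex regarding $\V\times\RR$'' is equivalent to ``the robust strong $\V$-stable duality holds for $({\rm RP})-({\rm RD})$''. Reading this through the identifications $\widehat\A=\widehat\A_2$ and $({\rm RD})=({\rm RCD}_2)$ produces exactly $[({\rm k}_2)\Leftrightarrow({\rm l}_2)]$. I expect the only delicate point to be the justification of the reformulation itself --- that $G_t$ keeps properness and $\RR_+$-epi closedness even though it is a possibly infinite supremum, and that the identity $\lambda\sup_v g_t(\cdot,v)=\sup_v\lambda g_t(\cdot,v)$ together with \cite[Lemma 2.2]{GLi} really does yield the closed convex hull form of $\widehat\A_2$, the degenerate case $\lambda=0$ included; the rest is bookkeeping.
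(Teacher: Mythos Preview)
Your proposal is correct and follows exactly the route the paper intends: the paper simply states that Corollary~\ref{cor_7.5bb} is a direct consequence of Corollary~\ref{cor_SD1} applied to the ``second way'' reformulation, and you have carefully supplied the verifications (properness and $\RR_+$-epi closedness of $G_t$, the identification $\widehat\A=\widehat\A_2$ via \cite[Lemma~2.2]{GLi}, and $({\rm RD})=({\rm RCD}_2)$) that the paper leaves implicit. There is nothing to add.
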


\begin{corollary}\label{cor_7.6zab}
Assume that T is a compact and convex subset of some topological vector space, and that the function $t\mapsto \sup_{v\in U_t}g_t(x, v)$ is concave and usc on $T$ for all $x\in C\cap\dom f$. Assume further that the following
 condition holds:
\begin{itemize}[nosep]
\item[$(\widehat{\rm C}_0^2)$] $\forall t\in T,\; \exists x_t\in C\cap \dom f: \sup_{v\in U_t} g_t(x_u,v)<0.$
\end{itemize}
Then, the robust strong stable duality holds for the pair $({\rm RCP})-({\rm RCD}_2)$.  
\end{corollary}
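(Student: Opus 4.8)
The plan is to obtain this corollary as a direct instance of Corollary~\ref{cor_7.2}, applied to the ``second way'' reformulation of $({\rm RCP})$ as a problem of the form $({\rm RP})$: take $Z=\mathbb{R}$, $S=\mathbb{R}_+$ (so $S^+=\mathbb{R}_+$), $\U=T$, and $G_t(x):=\sup_{v\in U_t}g_t(x,v)$ for $x\in X$, $t\in T$. First I would check that this reformulation is faithful. For $x\in C$ one has $g_t(x,u_t)\le 0$ for all $u_t\in U_t$ and all $t\in T$ if and only if $G_t(x)\le 0$ for all $t\in T$, i.e. $x\in\bigcap_{t\in T}G_t^{-1}(-S)$; hence the feasible set $A$ of $({\rm RCP})$ coincides with the feasible set of the reformulated $({\rm RP})$, and by construction the dual $({\rm RCD}_2)$ is exactly $({\rm RD})$ for this data. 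Moreover $\U=T$ is compact and convex by hypothesis and $Z=\mathbb{R}$ is a normed space, so the ambient hypotheses of Corollary~\ref{cor_7.2} are in place; it remains to verify $(\mathcal{H}_0)$, $(\mathcal{H}_1)$, $(\mathcal{H}_2)$ and the Slater-type condition $(\widehat{\rm C}_0)$ for this instance.

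For $(\mathcal{H}_0)$: since $g_t(\cdot,v)\in\Gamma(X)$ for every $v\in U_t$, the pointwise supremum $G_t=\sup_{v\in U_t}g_t(\cdot,v)$ is convex and lsc (equivalently, $\mathbb{R}_+$-convex and $\mathbb{R}_+$-epi closed), and it never equals $-\infty_Z$ because no $g_t(\cdot,v)$ does; properness follows from the standing assumption $A\cap\dom f\ne\emptyset$, since at any $x_0\in A\cap\dom f$ one has $G_t(x_0)\le 0<+\infty$. Together with $f\in\Gamma(X)$ and $C$ nonempty closed convex, this yields $(\mathcal{H}_0)$.

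For $(\mathcal{H}_1)$ and $(\mathcal{H}_2)$, applied to the family $\big(t\mapsto G_t(x)\big)_{x\in C\cap\dom f}$: here $S^+=\mathbb{R}_+$ and $\U=T$ is convex, and for each $x\in C\cap\dom f$ the function $t\mapsto G_t(x)=\sup_{v\in U_t}g_t(x,v)$ is, by hypothesis, concave on $T$ and (by the previous paragraph) $\mathbb{R}\cup\{+\infty\}$-valued, so Example~\ref{ex5.1} applies verbatim and shows that this collection is uniformly $\mathbb{R}_+$-concave, i.e. $(\mathcal{H}_1)$ holds. For $(\mathcal{H}_2)$: by Lemma~\ref{rem_5.1zzz}(ii) a map into $\mathbb{R}$ is $\mathbb{R}_+$-uniformly usc if and only if it is usc, and $t\mapsto G_t(x)$ is usc for each $x\in C\cap\dom f$ by hypothesis.

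Finally, in the reformulation the condition $(\widehat{\rm C}_0)$ of Corollary~\ref{cor_7.2} reads: for all $t\in T$ there is $x_t\in C\cap\dom f$ with $G_t(x_t)\in-\inte S$; since $S=\mathbb{R}_+$ we have $-\inte S=\{r\in\mathbb{R}:r<0\}$, so this is precisely $\sup_{v\in U_t}g_t(x_t,v)<0$, which is exactly $(\widehat{\rm C}_0^2)$. All hypotheses of Corollary~\ref{cor_7.2} being in force, robust strong stable duality holds for $({\rm RP})-({\rm RD})$, which under the above identification is $({\rm RCP})-({\rm RCD}_2)$. The only steps that are not entirely routine are confirming that the reformulation preserves both the feasible set and the dual value, and that $G_t$ inherits the structure required by $(\mathcal{H}_0)$ — the latter resting on the classical fact that a pointwise supremum of lsc convex functions is lsc convex, with finiteness supplied by $A\cap\dom f\ne\emptyset$.
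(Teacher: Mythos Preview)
Your proof is correct and follows exactly the route the paper intends: the corollary is stated as a direct consequence of Corollary~\ref{cor_7.2} applied to the ``second way'' reformulation ($Z=\mathbb{R}$, $S=\mathbb{R}_+$, $\U=T$, $G_t(x)=\sup_{v\in U_t}g_t(x,v)$), and you have faithfully verified each of the hypotheses $(\mathcal{H}_0)$, $(\mathcal{H}_1)$ (via Example~\ref{ex5.1}), $(\mathcal{H}_2)$ (via Lemma~\ref{rem_5.1zzz}(ii)), and $(\widehat{\rm C}_0)$ in that setting.
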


$\bullet$ {\it The third way:} Take $Z=\mathbb{R}$, $\U=\prod_{t\in T} U_t$, and
$G_{u}(x)=\sup_{t\in T}g_t(x,u_t)$ for all $x\in X$ and $u=(u_t)_{t\in T}$. Then, the qualifying set $\widehat{\mathcal{A}}$ becomes
\begin{align*}
\widehat\A_3
&= \bigcup\limits_{\substack{\lambda\ge 0,\\(u_t)_{t\in T} \in \U }} \cl\co\left(\bigcup_{t\in T}\epi \left(f+i_C+\lambda g_t(.,u_t)\right)^{\ast }\right)
\end{align*}
and the robust dual problem $(\rm RD)$    of  of $(\rm RCP)$   turns to  new form   as follows:
$$({\rm RCD}_3)\qquad 
\mathop{{\sup}}\limits_{\substack{\lambda\ge 0,\\(u_t)_{t\in T} \in \U }}
		\mathop{\inf}_{x\in C}\sup_{t\in T} \left(f(x)+\lambda g_t(x,u_t)\right).
$$
Now, Corollaries \ref{cor_SD1}-\ref{cor_7.2} gives us the next results. 
\begin{corollary}\label{cor_7.5b}
	Let $\emptyset\ne \V\subset X^*$. The following statements are equivalent:
	\begin{itemize}[nosep]
		\item[$({\rm k}_3)$] The set $\widehat{\mathcal{A}}_3$ is  closed and convex regarding  $\V\times \R$,

		\item[$({\rm l}_3)$] The robust strong $\V$-stable duality holds for the pair $({\rm RCP})-({\rm RCD}_3)$. 	\end{itemize}
\end{corollary}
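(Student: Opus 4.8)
The plan is to reduce Corollary~\ref{cor_7.5b} to Corollary~\ref{cor_SD1}: under the ``third way'' reformulation, $({\rm RCP})$ is nothing but a particular instance of the abstract robust convex problem $({\rm RP})$ whose qualifying set is $\widehat{\mathcal{A}}_3$ and whose Lagrange dual is $({\rm RCD}_3)$, so the announced equivalence is exactly Corollary~\ref{cor_SD1} read for that instance.

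First I would fix the data $Z=\R$, $S=\R_+$, $\U=\prod_{t\in T}U_t$, and $G_u(x):=\sup_{t\in T}g_t(x,u_t)$ for $x\in X$ and $u=(u_t)_{t\in T}\in\U$, and check that it fits the framework of $({\rm RP})$. Convexity and lower semicontinuity of each $G_u$ are inherited from the family $\big(g_t(\cdot,u_t)\big)_{t\in T}\subset\Gamma(X)$ by taking the pointwise supremum, so $G_u$ is $\R_+$-convex and $\R_+$-epi closed; and $G_u$ is proper since any $x_0\in A\cap\dom f$ satisfies $g_t(x_0,u_t)\le 0$ for every $t\in T$, whence $G_u(x_0)\le 0<+\infty$. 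Together with $f\in\Gamma(X)$ and $C$ nonempty, closed and convex this shows that $(\mathcal{H}_0)$ holds, while $A\cap\dom f\ne\emptyset$ is part of the hypotheses.

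Next I would record three identifications. \emph{Feasible set:} $G_u(x)\in-\R_+$ is equivalent to $g_t(x,u_t)\le 0$ for all $t\in T$, and as $u=(u_t)_{t\in T}$ ranges over $\prod_{t\in T}U_t$ every pair $(t,u_t)$ with $t\in T$ and $u_t\in U_t$ is reached, so $C\cap\bigcap_{u\in\U}G_u^{-1}(-S)$ equals the feasible set $A$ of $({\rm RCP})$. \emph{Dual:} for $\lambda\ge 0$ one has $f(x)+\lambda\sup_{t\in T}g_t(x,u_t)=\sup_{t\in T}\big(f(x)+\lambda g_t(x,u_t)\big)$, hence the Lagrange dual $({\rm RD})$ of the reformulated problem is literally $({\rm RCD}_3)$ and $\V$-stable robust strong duality for $({\rm RP})-({\rm RD})$ becomes that for $({\rm RCP})-({\rm RCD}_3)$. \emph{Qualifying set:} identifying $S^+$ with $\R_+$ gives $\widehat{\mathcal{A}}=\bigcup_{\lambda\ge 0,\,u\in\U}\epi(f+i_C+\lambda G_u)^*$, and for each admissible $(\lambda,u)$ the function $f+i_C+\lambda G_u=\sup_{t\in T}\big(f+i_C+\lambda g_t(\cdot,u_t)\big)$ is a pointwise supremum of members of $\Gamma(X)$ which is proper (finite at $x_0$); so \cite[Lemma 2.2]{GLi} yields $\epi(f+i_C+\lambda G_u)^*=\cl\co\big(\bigcup_{t\in T}\epi(f+i_C+\lambda g_t(\cdot,u_t))^*\big)$, and the union over $(\lambda,u)$ gives $\widehat{\mathcal{A}}=\widehat{\mathcal{A}}_3$. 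With these identifications $({\rm k}_3)$ is exactly ``$\widehat{\mathcal{A}}$ is closed and convex regarding $\V\times\R$'' and $({\rm l}_3)$ is exactly robust strong $\V$-stable duality for $({\rm RP})-({\rm RD})$, so $[({\rm k}_3)\Leftrightarrow({\rm l}_3)]$ follows from Corollary~\ref{cor_SD1}.

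I expect the only genuinely delicate step to be the identity $\widehat{\mathcal{A}}=\widehat{\mathcal{A}}_3$: one must check that the hypotheses of \cite[Lemma 2.2]{GLi} are met for \emph{every} admissible pair $(\lambda,u)$ — in particular that each $f+i_C+\lambda g_t(\cdot,u_t)$ is proper, convex and lsc, properness coming from the common feasibility point $x_0$ — and that the degenerate value $\lambda=0$ is treated consistently with the composition conventions used earlier, so that its contributions to the two sides coincide. Everything else is the routine bookkeeping indicated above.
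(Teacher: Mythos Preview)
Your proposal is correct and follows exactly the route the paper intends: the paper simply says the result ``follows from Corollaries \ref{cor_SD1}--\ref{cor_7.2}'' after recording the third-way identifications $Z=\R$, $\U=\prod_{t\in T}U_t$, $G_u=\sup_{t\in T}g_t(\cdot,u_t)$ and the formula for $\widehat{\A}_3$, without writing out any further details. Your added verifications (properness of $G_u$ via a point of $A\cap\dom f$, the supremum identity yielding $({\rm RCD}_3)$, and the use of \cite[Lemma~2.2]{GLi} to rewrite each $\epi(f+i_C+\lambda G_u)^*$) are precisely the checks the paper leaves implicit, and your flagging of the $\lambda=0$ boundary case is a fair remark that the paper does not address.
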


\begin{corollary}
Assume that  $U_t$ is a compact and convex subset of some vector topological space for all $t\in T$, and that the function $(u_t)_{t\in T}\mapsto \sup_{t\in T}g_t(x, u_t)$ is concave and usc on $\U$ for all $x\in C\cap\dom f$. Assume further that the following
 condition holds:
\begin{itemize}[nosep]
\item[$(\widehat{\rm C}_0^3)$] $\forall u=(u_t)_{t\in T}\in \U,\; \exists x_u\in C\cap \dom f: \sup_{t\in T} g_t(x_u,u_t)<0.$
\end{itemize}
Then, the robust strong stable duality holds for the pair $({\rm RCP})-({\rm RCD}_3)$. \end{corollary}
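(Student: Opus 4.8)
The plan is to recognise $({\rm RCP})$ as a particular instance of the abstract robust convex problem $({\rm RP})$ via the ``third way'' substitution recorded above --- $Z=\mathbb{R}$, $S=\mathbb{R}_+$, $\U=\prod_{t\in T}U_t$ (with the product topology), and $G_u(x)=\sup_{t\in T}g_t(x,u_t)$ --- and then to check that all the hypotheses of Corollary \ref{cor_7.2} are in force. Since under this identification $\widehat{\mathcal{A}}$ becomes $\widehat{\mathcal{A}}_3$ and $({\rm RD})$ becomes $({\rm RCD}_3)$, the conclusion of Corollary \ref{cor_7.2} is precisely the robust strong stable duality for the pair $({\rm RCP})-({\rm RCD}_3)$.

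First I would confirm the standing data assumptions behind $({\rm RP})$, i.e.\ hypothesis $(\mathcal{H}_0)$ for this reduced problem. Because each $g_t(\cdot,u_t)\in\Gamma(X)$, the pointwise supremum $G_u=\sup_{t\in T}g_t(\cdot,u_t)$ is convex and lsc, hence $\mathbb{R}_+$-convex and $\mathbb{R}_+$-epi closed; condition $(\widehat{\rm C}_0^3)$ forces $\dom G_u\ne\emptyset$, and since no $g_t$ attains the value $-\infty$ neither does $G_u$, so $G_u$ is proper. Together with $f\in\Gamma(X)$ and $C$ nonempty, closed, convex, this gives $(\mathcal{H}_0)$. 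Moreover $Z=\mathbb{R}$ is trivially a normed space, and $\U=\prod_{t\in T}U_t$ is compact by Tychonoff's theorem (each $U_t$ is compact).

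Next I would verify $(\mathcal{H}_1)$ and $(\mathcal{H}_2)$. As $S=\mathbb{R}_+$ we have $S^+=\mathbb{R}_+$; by hypothesis the map $u\mapsto G_u(x)=\sup_{t\in T}g_t(x,u_t)$ is concave on the convex set $\U$ for each $x\in C\cap\dom f$, so Example \ref{ex5.1} (applied with the index set $I$ taken to be $C\cap\dom f$ and the $\nu$-th function $u\mapsto G_u(x)$) shows that $\big(u\mapsto G_u(x)\big)_{x\in C\cap\dom f}$ is uniformly $\mathbb{R}_+$-concave, which is $(\mathcal{H}_1)$. Again by hypothesis $u\mapsto G_u(x)$ is usc on $\U$, and Lemma \ref{rem_5.1zzz}(ii) identifies $\mathbb{R}_+$-uniform upper semicontinuity with ordinary upper semicontinuity, giving $(\mathcal{H}_2)$. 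Finally $-\inte S=(-\infty,0)$, so the Slater-type condition $(\widehat{\rm C}_0)$ of Corollary \ref{cor_7.2} reads ``$\forall u\in\U\ \exists x_u\in C\cap\dom f:\ \sup_{t\in T}g_t(x_u,u_t)<0$'', which is exactly $(\widehat{\rm C}_0^3)$.

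All hypotheses of Corollary \ref{cor_7.2} being satisfied, robust strong stable duality holds for $({\rm RP})-({\rm RD})$; unwinding the substitution (which turns $\widehat{\mathcal{A}}$ into $\widehat{\mathcal{A}}_3$ and $({\rm RD})$ into $({\rm RCD}_3)$) yields robust strong stable duality for $({\rm RCP})-({\rm RCD}_3)$, as claimed. I do not anticipate a genuine obstacle here: the only mildly delicate point is the reduction step itself --- confirming that $G_u=\sup_{t\in T}g_t(\cdot,u_t)$ inherits properness, $S$-convexity and $S$-epi closedness, and that the abstract Slater condition transcribes correctly --- but this is routine, all the analytic weight having already been carried by Corollary \ref{cor_7.2}, Example \ref{ex5.1}, and Lemma \ref{rem_5.1zzz}.
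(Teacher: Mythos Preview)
Your proof is correct and follows precisely the approach the paper intends: the corollary is stated without explicit proof there, but the argument is the obvious application of Corollary~\ref{cor_7.2} after the ``third way'' substitution, exactly as you carry out (and in parallel with the explicitly proved Corollary~\ref{cor_7.4zz} for the first way). Your verifications of $(\mathcal{H}_1)$ via Example~\ref{ex5.1}, of $(\mathcal{H}_2)$ via Lemma~\ref{rem_5.1zzz}(ii), of compactness of $\U$ by Tychonoff, and of the equivalence $(\widehat{\rm C}_0)\Leftrightarrow(\widehat{\rm C}_0^3)$ are all in order.
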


\begin{remark}
Noting  that there are still other  ways of transforming $({\rm RCP})$ to the form of $({\rm RP})$.  For instance, take $\mathfrak{U}=\{(t,u_t): t\in T, \; u_t\in U_t\}$, $Z=\mathbb{R}^\mathfrak{U}$, $S=\mathbb{R}^\mathfrak{U}_+$, $\U=\{\mathfrak{U}\}$ and $G_\mathfrak{U}=(g_t(.,u_t))_{(t,u_t)\in \mathfrak{U}}$. Then,  $\widehat\A$ and the dual problem $({\rm RD})$ become, respectively 
\begin{gather*}
\widehat{\A}_4=\bigcup_{\lambda\in \mathbb{R}^{(\mathfrak{U})}_+}\epi \left(f+i_C+\sum_{(t,u_t)\in \mathfrak{U}} \lambda_{(t,u_t)}g_t(.,u_t)\right)^*,\\
({\rm RCD}_4)\quad \sup_{\lambda\in \mathbb{R}^{(\mathfrak{U})}_+} \inf_{x\in C} \left( f(x)+\sum_{(t,u_t)\in \mathfrak{U}} \lambda_{(t,u_t)}g_t(x,u_t)\right)^*. 
\end{gather*}
 By  Proposition \ref{epirco2},  $\widehat \A_4$ is a convex subset of $X^*\times \mathbb{R}$. So, Corollary \ref{cor_SD1} yields the equivalence of two following assertions:
\begin{itemize}[nosep]
		\item[$({\rm k}_4)$] The set $\widehat{\mathcal{A}}_4$ is  closed regarding  $\V\times \R$, 
		\item[$({\rm l}_4)$] The robust strong $\V$-stable duality holds for the pair $({\rm RCP})-({\rm RCD}_4)$.
	\end{itemize}
This result covers \cite[Theorem 4.1]{DGLV17-Robust} for the case $i=C$. By using other suitable ways, we can get results that possibly cover \cite[Theorem 4.1]{DGLV17-Robust} with other values of  $i$. 
\end{remark}

\end{document}